\newtheorem{theorem}{Theorem}[section]
\newenvironment{customthm}[1]
  {\innercustomthm}
  {\endinnercustomthm}
\newtheorem{corollary}[theorem]{Corollary}
\newtheorem{lemma}[theorem]{Lemma}
\newtheorem{prop}[theorem]{Proposition}
\newtheorem{fact}[theorem]{Fact}
\theoremstyle{remark}
\newtheorem{remark}[theorem]{Remark}
\newtheorem{notation}[theorem]{Notation}
\theoremstyle{definition}
\newtheorem{definition}[theorem]{Definition}
\newtheorem{question}[theorem]{Question}
\newtheorem{construction}[theorem]{Construction}
\author{Paolo Marimon}
\title{On the non-measurability of $\omega$-categorical Hrushovski constructions}
\newcommand{\dimeas}[1]{(\mathrm{d}({#1}), \mu(#1))}
\newcommand{\mahT}{
\raisebox{-6pt}{\includegraphics[]{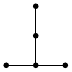}}}
\newcommand{\ptwo}{\raisebox{-1pt}{\includegraphics[]{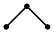}}}
\newcommand{\dtwo}{\raisebox{1pt}{\includegraphics[]{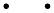}}}
\newcommand{\pthree}{\raisebox{-3pt}{
\includegraphics[]{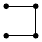}}\:}
\newcommand{\elle}{\raisebox{-3pt}{\includegraphics[]{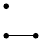}}}
\newcommand{\ptwodot}{\raisebox{-2pt}{\includegraphics[]{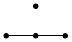}}}
\newcommand{\indep}[2]{%
  \mathrel{
    \mathop{
      \vcenter{
        \hbox{\oalign{\noalign{\kern-.3ex}\hfil$\vert$\rlap{$^\mathrm{#2}$}\hfil\cr
              \noalign{\kern-.7ex}
              $\smile$\cr\noalign{\kern-.3ex}}}
      }
    }\displaylimits_{#1}
  }
}
\begin{document}


\maketitle

\begin{abstract}
    We study $\omega$-categorical $MS$-measurable structures. Our main result is that a class of $\omega$-categorical Hrushovski constructions, supersimple of finite $SU$-rank is not $MS$-measurable. These results complement the work of Evans on a conjecture of Macpherson and Elwes. In constrast to Evans' work, our structures may satisfy independent $n$-amalgamation for all $n$. We also prove some general results in the context of $\omega$-categorical $MS$-measurable structures. Firstly, in these structures, the dimension in the $MS$-dimension-measure can be chosen to be $SU$-rank. Secondly, non-forking independence implies a form of probabilistic independence in the measure. The latter follows from more general unpublished results of Hrushovski, but we provide a self-contained proof.
\end{abstract}

\section{Introduction}

This work focuses on $\omega$-categorical $MS$-measurable structures and on the role of supersimple $\omega$-categorical Hrushovski constructions in settling some conjectures about them.  In particular, we prove that a certain class of $\omega$-categorical supersimple Hrushovski constructions of finite $SU$-rank is not $MS$-measurable. The question of whether, in general, such structures are not $MS$-measurable is still open.\\

Let $\mathcal{L}$ be a first order language. A complete $\mathcal{L}$-theory is \textbf{$\pmb{\omega}$-categorical} if it has a unique countable model up to isomorphism. We say that an $\mathcal{L}$-structure is $\omega$-categorical if its theory is. An $MS$-measurable structure has an associated dimension-measure function on its definable sets, where for the sets of a given dimension there is an invariant, additive measure satisfying Fubini's theorem with respect to the dimension and giving such sets positive measure. We give a formal definition in Definition \ref{MScond}. Standard examples of $MS$-measurable structures are pseudofinite fields, the random graph, and $\omega$-categorical $\omega$-stable structures \cite{EM}. In this article, we focus on whether $\omega$-categorical Hrushovski constructions are $MS$-measurable. These are a class of relational structures, which generalise Fra\"{i}ss\'{e} limits, and whose construction depends on a choice of a parameter $\alpha\in\mathbb{R}^{>0}$ and of a non-decreasing function $f:\mathbb{R}^{>0}\to\mathbb{R}^{>0}$. We discuss their construction in section \ref{Hrushconstr}.\\

In their review article on $MS$-measurable structures \cite{EM}, Elwes and Macpherson ask the following questions which require the study of $\omega$-categorical Hrushovski constructions:
\begin{question}\label{q1}
Is there any $\omega$-categorical supersimple theory of finite $SU$-rank which is not $MS$-measurable?
\end{question}
\begin{question}\label{q2}
Is every $\omega$-categorical $MS$-measurable structure one-based?
\end{question}
Being $MS$-measurable implies being supersimple of finite $SU$-rank \cite{EM}. Furthermore, in such structures $D$-rank, $S1$-rank and $SU$-rank are the same. Hence, the first question is simply asking whether supersimplicity and finite rank imply $MS$-measurability in an $\omega$-categorical context. \\

Supersimple $\omega$-categorical Hrushovski constructions of finite $SU$-rank are essential structures for the study of these questions. These are the only examples of $\omega$-categorical supersimple not one-based structures which we know of. Hence, finding out that any of these structures are not $MS$-measurable would answer the first question positively. Finding out that any of these structures is $MS$-measurable would answer negatively the second question. 
We already know that some $\omega$-categorical Hrushovski constructions are not $MS$-measurable from the work of Evans \cite{Measam}. There, Evans first proves that any $MS$-measurable structure must satisfy a weak form of independent $n$-amalgamation using a version by Towsner of the Hypergraph Removal Lemma \cite{Town2,Gowers,Rodl}. Then, he proves that for a class of Hrushovski constructions (with a ternary relation and $\alpha=1$), any dimension is a scaled version of the natural Hrushovski dimension. With these tools, Evans shows that some such structures do not satisfy the weak $n$-amalgamation property.\\

While Evans' dimension theorem can be generalised to other classes of Hrushovski constructions, it is possible for Hrushovski constructions to satisfy the weak amalgamation condition (and even independent $n$ amalgamation for all $n$) as long as $f$ is slow-growing enough \cite{Udiamalg}. Indeed, a natural question which arises from Evans' paper is whether $MS$-measurability for an $\omega$-categorical finite rank structure is implied by satisfying some strong enough form of independent $n$-amalgamation.\\

In this paper, we develop new tools to study $\omega$-categorical $MS$-measurable structures. These will allow us to study how an $MS$-dimension-measure would behave in an $\omega$-categorical Hrushovski construction if it was $MS$-measurable. In particular, in Section \ref{finally}, we introduce the $\omega$-categorical supersimple finite rank Hrushovski constructions $\mathcal{M}_f$ for which we prove the main theorem of this paper:

\begin{customthm}{\ref{eqnstheorem}} The structures $\mathcal{M}_f$ satisfying the conditions of Construction \ref{actualf} are not $MS$\-/measurable. Hence, there are $\omega$-categorical supersimple finite $SU$-rank structures which are not $MS$-measurable. Indeed, there are supersimple $\omega$-categorical structures of finite $SU$-rank and with independent $n$-amalgamation over finite algebraically closed sets for all $n$ which are not $MS$-measurable.
\end{customthm}
The proof of this result is contained in subsection \ref{proof!}. \\

The structure of this paper is as follows. In Section \ref{MSmeasures}, we introduce some basic notions concerning $MS$-measurable structures. We also prove the folklore theorem of Ben Yaacov that the natural notion of independence in $MS$-measurable structures corresponds to non-forking independence. In Corollary \ref{amacor}, we find a set of equations that the measure of an $MS$-measurable structure must satisfy. We also prove that, in an $\omega$-categorical context, we may take the dimension of an $MS$-measurable structure to be $SU$-rank (Corollary \ref{SUok}). This allows us to circumvent the need to prove a dimension theorem for the class of Hrushovski constructions we will consider.

Section \ref{indepmeas} yields another set of equations for the measure in $\omega$-categorical $MS$-measurable structures which show how non-forking independence yields probabilistic independence in the measure (Theorem \ref{indeptheorem}). These results are a special case of the probabilistic independence theorem in the unpublished \cite{AER}.

In Section \ref{Hrushconstr}, we introduce Hrushovski constructions and in subsection \ref{Hrusheqns} we specify the form of the equations from section \ref{MSmeasures} in the context of such structures.

In Section \ref{finally}, we introduce in Remark \ref{actualf} various supersimple $\omega$-categorical Hrushovski constructions of finite $SU$-rank, some of which satisfy independent $n$-amalgamation for all $n$. We will prove these are not $MS$-measurable in subsection \ref{proof!}. Various properties of these structures (especially supersimplicity) are proven in the Appendix.
The most relevant results for understanding the proof of our main theorem are Corollaries \ref{amacor} and \ref{SUok}, Theorems \ref{indeptheorem} and \ref{Hrush}, and subsection \ref{proof!}.\\

We assume some knowledge of model theory,  especially regarding $\omega$-categorical structures. Most of the relevant material is covered in Chapters 1 to 4 of Tent and Ziegler's book \cite{TZ}. Section \ref{indepmeas} uses some local stability theory for which the first chapter of \cite{Pillay} is sufficient. We work in countable languages and only with complete theories. We shall make frequent use of the equivalent conditions to $\omega$-categoricity from the Ryll-Nardzewski theorem. We also assume some basic knowledge of simple theories and $SU$-rank. Chapter 2 of \cite{Kimsimp} covers the relevant material.\\

Regarding notation, for a language $\mathcal{L}$, we work on an $\mathcal{L}$-structure $\mathcal{M}$ and write $M$ to denote the underlying set. The monster model for the $\mathcal{L}$-theory of $\mathcal{M}$ is denoted by $\mathbb{M}$. We use overlined lowercase letters at the beginning of the alphabet $\overline{a},\overline{b}, \dots$ to denote finite tuples from a model, and use  $\overline{x},\overline{y}, \cdots$ to denote tuples of variables. We use the non-overlined versions when speaking of $1$-tuples is sufficient. We use letters $A, B, C, \dots$ to denote (usually finite) subsets of our model. We use the greek letters $\phi, \psi, \chi\dots$ to denote $\mathcal{L}$-formulas, which we often write in the form $\phi(\overline{x}, \overline{y})$ to specify their free variables.\\

\textbf{Acknowledgements:} I would like to thank David Evans and Charlotte Kestner for their supervision and support on this project. I would also like to thank Ehud Hrushovski for sharing his notes \cite{AER} on a stronger version of the probabilistic independence theorem than the one proved in this article. The theorem greatly simplified the equations needed to prove Theorem \ref{eqnstheorem} and provided an excellent lens  to study the measures arising in $MS$-measurable structures. This paper is part of my PhD project at Imperial College London, which is supported by an Admin-Roth Scholarship.

\section{Measurable \texorpdfstring{$\omega$}{omega}-categorical structures}\label{MSmeasures}
In this section, we introduce $MS$-measurable structures and some basic facts about them. We also prove some original results on $\omega$-categorical $MS$-measurable structures.
We begin with Subsection \ref{basicdef}, where we introduce the notion of $MS$-measurability following \cite{MS} and \cite{EM}. Then, in Subsection \ref{dimind}, we prove the folklore result that dimension independence in $MS$-measurable structures corresponds to non-forking independence. Finally, in Subsection \ref{omegameas}, we find a set of equations which hold in $\omega$-categorical $MS$-measurable structures. We also show that if an $\omega$-categorical structure is $MS$-measurable, then it is $MS$-measurable with dimension given by $SU$-rank. This will allows us to avoid proving Evans' dimension theorem for the class of $\omega$-categorical Hrushovski constructions we will consider.

\subsection{Basic definitions}\label{basicdef}

Let $\mathcal{M}$ be a structure. By  $\mathrm{Def}(M)$ we mean the set of non-empty definable subsets of $M$ defined by formulas with parameters from $M$. Meanwhile, $\mathrm{Def}_{\overline{x}}(M)$ denotes the definable subsets of $M$ in the variable $\overline{x}$. For $\overline{a}$, $\overline{a}'\in M^{\vert\overline{y}\vert}$ and $A\subset M$, we write $\overline{a}\equiv_A \overline{a}'$ to say that $\overline{a}$ and $\overline{a}'$ have the same type over A.
If $A=\emptyset$, we simply write $\overline{a}\equiv \overline{a}'$.

\begin{definition} Let $X$ be any set and consider a function $g:\mathrm{Def(M)}\to X$.
For $A\subseteq M$, we say that $g$ is $A$-\textbf{invariant} if  $g(\phi(M^{\vert\overline{x}\vert}, \overline{a}))=g(\phi(M^{\vert\overline{x}\vert}, \overline{a}'))$ whenever $\overline{a}\equiv_A \overline{a}'$. When $A=\emptyset$, we say $g$ is \textbf{invariant}. The function $g$ is \textbf{definable} if for any formula $\phi(\overline{x}, \overline{y})$ and $k\in X$, the set of $\overline{a}\in M^{\vert\overline{y}\vert}$ such that $g(\phi(M^{\vert\overline{x}\vert}, \overline{a}))=k$ is definable over the empty set. We say that $g$ is \textbf{finite} if the set of values of $g(\phi(M^{\vert\overline{x}\vert}, \overline{a}))$ for $\overline{a}\in M^{\vert\overline{y}\vert}$ is finite.\\
To avoid cumbersome notation, when the model $\mathcal{M}$ is clear, we sometimes write $g(\phi(\overline{x}, \overline{a}))$ instead of $g(\phi(M^{\vert\overline{x}\vert}, \overline{a}))$.
\end{definition}

Note that definability always implies invariance. In an $\omega$-categorical context, by Ryll\-/Nardzewski, invariance implies definability and finiteness. 

Before introducing the notion of an $MS$-measurable structure, we follow the notation of \cite{WagDim} to speak of a  dimension function:

\begin{definition}
We call $\mathrm{dim : Def(M)}\to \mathbb{N}$ a \textbf{ dimension} if it satisfies the following conditions:
\begin{itemize}
    \item (Algebraicity) for $X$ finite and non-empty, $\mathrm{dim}(X)=0$;
    \item (Union) for $X, Y\in\mathrm{Def}(M)$, $\mathrm{dim}(X\cup Y)=\mathrm{max}\{\mathrm{dim}(X), \mathrm{dim}(Y)\}$; and
    \item (Additivity) for finite tuples $\overline{a},\overline{b}, \overline{c}$ from $M$, $\mathrm{dim}(\overline{a}\overline{b}/\overline{c})=\mathrm{dim}(\overline{a}/\overline{b}\overline{c})+\mathrm{dim}(\overline{b}/\overline{c})$.
\end{itemize}
\end{definition}

\begin{remark} In the additivity condition, by $\mathrm{dim}(\overline{a}/B)$ we mean $\mathrm{dim}(\mathrm{tp}(\overline{a}/B))$. For a partial type $\pi(\overline{x})$ over $B\subseteq M$, we define
\[\mathrm{dim}(\pi(\overline{x}))=\mathrm{min}\{\mathrm{dim}(\phi(M^{\vert\overline{x}\vert}, \overline{b}) \ \vert \ \pi(\overline{x})\vdash \phi(\overline{x}, \overline{b}) \}.\]
\end{remark}
\begin{remark} In the course of this paper, we work with definable dimensions in an $\omega$-categorical context. Hence, for $\overline{a}, \overline{b}$ finite tuples from $M$ we have that
\[\mathrm{dim}(\overline{a}/\overline{b})=\mathrm{dim}(\phi(M^{\vert\overline{x}\vert}, \overline{b})),\]
where $\phi(\overline{x}, \overline{b})$ is a formula isolating $\mathrm{tp}(\overline{a}/\overline{b})$.
\end{remark}

\begin{definition} We say that the function $\mu:\mathrm{Def}_{\overline{x}}(M)\to \mathbb{R}^{\geq 0}\cup \{\infty\}$ is a \textbf{measure} if it is a finitely additive function, i.e. for $X$ and $Y$ definable and disjoint in the same variable,
\[\mu(X\cup Y)=\mu(X)+\mu(Y).\]
We say $\mu$ is a \textbf{Keisler} measure if it takes values in $[0,1]$.
\end{definition}

We give here a slightly simplified definition of $MS$-measurability than the original following  \cite[Proposition 5.7]{MS}:

\begin{definition}\label{MScond} The $\mathcal{L}$-structure $\mathcal{M}$ is $\pmb{MS}$\textbf{-measurable} if there is a dimension-measure function $h:\mathrm{Def(M)}\to\mathbb{N}\times\mathbb{R}^{>0}$,  with notation $h(X)=\dimeas{X}$, satisfying the following conditions:
\begin{enumerate}
    \item The function $h$ is finite and definable.
    \item For $\overline{a}\in M^n$, $h(\{\overline{a}\})=(0,1)$.
    \item (Additivity) For $X, Y\subseteq M^n$ definable and disjoint,
     \begin{equation*} \mu(X\cup Y)=
   \left\{ \begin{array}{ll}
  \mu(X)+\mu(Y), & \text{for } \mathrm{d}(X)=\mathrm{d}(Y);\\
  \mu(Y) & \text{ for } \mathrm{d}(X)<\mathrm{d}(Y).
    \end{array}\right.
    \end{equation*}
    \item (Fubini) For $n\geq 2$, let $X\subseteq M^n$ be definable and for $1\leq m<n$, let $\pi:M^n\to M^m$ be a projection of $M^n$ to $m$-many coordinates. Suppose there is $(k, \nu)\in\mathbb{N}\times\mathbb{R}^{>0}$ such that for any $\overline{a}\in\pi(X)$, we have that $h(\pi^{-1}(\overline{a})\cap X)=(k, \nu)$. Then, $d(X)=d(\pi(X))+k$ and $\mu(X)=\mu(\pi(X))\nu$. 
\end{enumerate}
\end{definition}

\begin{remark}\label{eqMS}
From the definition it follows that being $MS$-measurable is a property of a theory, i.e. if $\mathcal{M}$ is $MS$-measurable, then so is any elementarily equivalent structure. We say that a many sorted structure $\mathcal{M}^*$ is $MS$-measurable if every restriction of $\mathcal{M}^*$ to finitely many sorts is $MS$-measurable. We have that if $\mathcal{M}$ is $MS$-measurable, then so is $\mathcal{M}^{eq}$ \cite[Proposition 5.10]{MS}.
\end{remark}

\begin{remark} An important, and somehow hidden, assumption of the definition of $MS$\-/measurability is that of \textbf{positivity}. That is, for $h(X)=(k, \nu)$, $\nu>0$. This will be vital to our proof of non-measurability since we will show that a given definable set must be assigned measure zero.
\end{remark}
\begin{remark} 
As noted in \cite[Proposition 5.3]{MS}, for $A$-definable $X$, we have an induced $A$-invariant probability measure on the definable subsets of $X$.
\end{remark}

\begin{remark} The dimension part of an $MS$-function is a definable dimension function, as defined above. Additivity follows from \cite[p.919]{WagDim}.
\end{remark}

In an $MS$-measurable context, for $\pi(\overline{x})$ a partial type over $B\subseteq M$, we define
\[\mu(\pi(\overline{x}))=\mathrm{inf}\{\mu(\phi(\overline{x}, \overline{b}))\vert \pi(\overline{x})\vdash \phi(\overline{x}, \overline{b}), \mathrm{d}(\phi(\overline{x}, \overline{b}))=\mathrm{d}(\pi(\overline{x}))\},\]
and we write $\mu(\overline{a}/B)$ for $\mu(\mathrm{tp}(\overline{a}/B))$.
For an $\omega$-categorical $MS$-measurable structure, $\mu(\overline{a}/\overline{b})\allowbreak=\mu(\phi(M^{\vert\overline{x}\vert}, \overline{b}))$, for any formula $\phi(\overline{x}, \overline{b})$ isolating $\mathrm{tp}(\overline{a}/\overline{b})$. 

\begin{remark}\label{Fub}
When speaking of the dimension and measure of types we shall also set the convention that $\mathrm{d}(\emptyset)=0$ and $\mu(\emptyset)=1$. This is helpful to treat the case of types over $\emptyset$ analogously to that of types over sets of parameters when dealing with Fubini. In fact, we shall make frequent use of the fact that, in an $\omega$-categorical $MS$-measurable structure, Fubini implies that \[h(\overline{a}\overline{b})=(\mathrm{dim}(\overline{a}\overline{b}), \mu(\overline{a}\overline{b}))=(\mathrm{dim}(\overline{a}/\overline{b})+\mathrm{dim}(\overline{b}),\mu(\overline{a}/\overline{b})\mu(\overline{b})).\]
 Hence, we adopt the conventions $\mathrm{d}(\emptyset)=0$ and $\mu(\emptyset)=1$ so that we may write 
  \[\mathrm{dim}(\overline{a})=\mathrm{dim}(\overline{a}/\emptyset)+\mathrm{dim}(\emptyset)=\mathrm{dim}(\overline{a}/\emptyset), \text{ and}\]
  \[\mu(\overline{a})=\mu(\overline{a}/\emptyset)\mu(\emptyset)=\mu(\overline{a}/\emptyset).\]
\end{remark}

\subsection{Dimension-independence}\label{dimind}

It is easy to obtain a notion of independence from an invariant dimension. In particular, in an $MS$-measurable structure, dimension-independence corresponds to non-forking independence. In \cite{EM}, this result is attributed to unpublished work of Ben-Yaacov. In this section we prove it briefly. In the next section, this will help us showing that in $\omega$-categorical $MS$-measurable structures we may take the dimension to be $SU$-rank without harm. 

\begin{definition} Let $\mathrm{d}:\mathrm{Dim}(\mathbb{M})\to \mathbb{N}$ be an invariant dimension, where $\mathbb{M}$ is a monster model. Let $\overline{a}$ be a tuple and $B, C$ be small subsets of $\mathbb{M}$. We say that $\overline{a}$ is $\mathrm{d}$-independent from $B$ over $C$, writing $\overline{a}\indep{C}{d} B$ if
\[\mathrm{d}(\overline{a}/BC)=\mathrm{d}(\overline{a}/C).\]
\end{definition}

The following is easy to prove from the basic properties of an invariant dimension:
\begin{prop} Let $\mathrm{d}:\mathrm{Dim}(\mathbb{M})\to \mathbb{N}$ be an invariant dimension. The relation of $\mathrm{d}$\-/independence is a notion of independence in the sense of \cite[Definition 4.1]{KPsimp}.
\end{prop}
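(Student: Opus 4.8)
The plan is to verify, one at a time, that $\mathrm{d}$-independence satisfies each of the axioms of an independence relation from \cite[Definition 4.1]{KPsimp}, using only the three defining properties of an invariant dimension (Algebraicity, Union, Additivity) together with the fact that $\mathrm{d}$ is invariant under the monster's automorphism group. The recurring trick throughout will be the additivity identity $\mathrm{d}(\overline{a}\overline{b}/\overline{c}) = \mathrm{d}(\overline{a}/\overline{b}\overline{c}) + \mathrm{d}(\overline{b}/\overline{c})$, which lets one convert statements about $\mathrm{d}(\overline{a}/BC)$ versus $\mathrm{d}(\overline{a}/C)$ into statements about sums that can be manipulated arithmetically. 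I would first record a couple of immediate consequences: monotonicity of $\mathrm{d}$ in the base (i.e.\ $\mathrm{d}(\overline{a}/C) \geq \mathrm{d}(\overline{a}/C')$ when $C \subseteq C'$, which follows from additivity applied to $\mathrm{d}(\overline{a}\,\overline{c'}/C)$ for an enumeration $\overline{c'}$ of $C' \setminus C$, since both summands are non-negative), and the fact that $\mathrm{d}(\overline{a}/C) = 0$ whenever $\overline{a} \in \mathrm{acl}(C)$ (from Algebraicity, via definability/invariance). These make several axioms routine.

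Concretely, the axioms to check are: \emph{invariance} (immediate, since $\mathrm{d}$ is $\emptyset$-invariant, so $\sigma(\overline{a}) \indep{\sigma(C)}{d} \sigma(B)$ whenever $\overline{a} \indep{C}{d} B$); \emph{monotonicity/base monotonicity and left transitivity} (if $\overline{a} \indep{C}{d} BD$ then $\overline{a} \indep{C}{d} B$ and $\overline{a} \indep{CB}{d} D$ --- this is the key computation: from $\mathrm{d}(\overline{a}/CBD) = \mathrm{d}(\overline{a}/C)$ and the monotonicity chain $\mathrm{d}(\overline{a}/C) \geq \mathrm{d}(\overline{a}/CB) \geq \mathrm{d}(\overline{a}/CBD)$, equality propagates, giving both conclusions at once); \emph{symmetry} ($\overline{a} \indep{C}{d} \overline{b} \iff \overline{b} \indep{C}{d} \overline{a}$, which follows by writing $\mathrm{d}(\overline{a}\overline{b}/C)$ in two ways via additivity: $\mathrm{d}(\overline{a}/\overline{b}C) + \mathrm{d}(\overline{b}/C) = \mathrm{d}(\overline{b}/\overline{a}C) + \mathrm{d}(\overline{a}/C)$, so $\mathrm{d}(\overline{a}/\overline{b}C) = \mathrm{d}(\overline{a}/C)$ iff $\mathrm{d}(\overline{b}/\overline{a}C) = \mathrm{d}(\overline{b}/C)$); \emph{transitivity} (the converse direction of the monotonicity item, again by chaining equalities of $\mathrm{d}$ along $C \subseteq CB \subseteq CBD$ using non-negativity of the additive increments); \emph{normality/base closure} ($\overline{a} \indep{C}{d} B$ should be unaffected by replacing $C$ by $\mathrm{acl}(C)$ or adding elements of $\mathrm{acl}(\overline{a}C) \cap \mathrm{acl}(BC)$ to the base; handled via the $\mathrm{acl}$ observation above); and \emph{existence/extension}, which in \cite{KPsimp}'s formulation asks that for any $\overline{a}, B, C$ there is $\overline{a}' \equiv_C \overline{a}$ with $\overline{a}' \indep{C}{d} B$ --- here one takes a nonforking-style extension, or more elementarily picks $\overline{a}'\equiv_C \overline{a}$ realizing a type over $BC$ of maximal dimension among extensions of $\mathrm{tp}(\overline{a}/C)$, noting that $\mathrm{d}(\overline{a}/C)$ itself is an upper bound for $\mathrm{d}(\overline{a}'/BC)$ and is attained (e.g.\ by a coheir/invariant extension), so the maximum equals $\mathrm{d}(\overline{a}/C)$.

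The main obstacle I expect is the \emph{existence} axiom: the monotonicity, symmetry, and transitivity axioms are pure arithmetic with the additivity law, but existence genuinely requires producing a realization of $\mathrm{tp}(\overline{a}/C)$ whose dimension does not drop over $BC$. For a general invariant dimension this is not automatic from the three axioms alone; one needs that $\mathrm{d}(\overline{a}/C)$ is actually realized by some global (or at least $BC$-) extension of $\mathrm{tp}(\overline{a}/C)$. The clean way to get this is to invoke that any type has a global extension that is, say, finitely satisfiable in (a small model containing) $C$ --- an Aut-invariant extension --- and to argue that such an extension cannot lower $\mathrm{d}$; alternatively, if one is willing to use that in the intended applications $\mathrm{d}$ is $SU$-rank or the $MS$-dimension, existence is inherited from the ambient simplicity theory. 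I would state the proof at the level of the arithmetic axioms in full and treat existence by reducing to the standard fact that dimension is witnessed by an invariant (e.g.\ coheir) extension, which is where essentially all the content sits; everything else is bookkeeping with additivity, monotonicity, and the algebraicity of $\mathrm{acl}$.
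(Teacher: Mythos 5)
The paper gives no proof of this proposition at all (it is dismissed as ``easy to prove from the basic properties of an invariant dimension''), so there is nothing to compare against line by line; your axiom-by-axiom verification is the intended argument, and the arithmetic parts (symmetry, monotonicity, transitivity via the additivity identity, plus invariance) are correct as you describe them. Two adjustments. First, your treatment of the \emph{existence} axiom leans on the wrong mechanism: a coheir or invariant extension requires a model as base and, more importantly, there is no a priori reason an automorphism-invariant extension of a type preserves an abstract invariant dimension. But existence \emph{is} automatic from the three dimension axioms together with compactness, so your worry is unfounded: view $p=\mathrm{tp}(\overline{a}/C)$ as a partial type over $BC$ (its dimension there is still $\mathrm{d}(\overline{a}/C)$, since any $BC$-formula implied by $p$ contains a $C$-formula of $p$ and dimension is monotone under inclusion by the Union axiom); if every complete extension $q$ of $p$ to $BC$ had $\mathrm{d}(q)<\mathrm{d}(p)$, then picking a low-dimensional formula from each $q$ and applying compactness would give finitely many formulas of dimension $<\mathrm{d}(p)$ whose disjunction is implied by $p$, and the Union axiom would force $\mathrm{d}(p)<\mathrm{d}(p)$, a contradiction. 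Second, the Kim--Pillay axiom list also includes \emph{finite character} and \emph{local character}, which you do not mention; both follow immediately from the fact that $\mathrm{d}$ of a type is defined as a minimum over single formulas (hence over finitely many parameters) and takes values in $\mathbb{N}$, so the minimum is attained over a finite subset of the base. With those two points supplied, the verification is complete.
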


Hence, in an $MS$-measurable context the dimension part of an $MS$-dimension-measure yields a notion of independence. We shall prove this 
coincides with non-forking independence. In order to prove this, first recall Lemma 3.5 from \cite{EM}:

\begin{lemma}\label{divlemma} Let $\mathcal {M}$ be $MS$-measurable. Let $X\subseteq M^{\vert\overline{x}\vert}$ be definable and $\phi(\overline{x}, \overline{y})$ be such that there is an indiscernible sequence $(\overline{b}_i)_{i<\omega}$ such that $\{\phi(\overline{x}, \overline{b}_i)\vert i<\omega\}$ is inconsistent and $\phi(M^{\vert\overline{x}\vert}, \overline{b}_i)\subseteq X$ for each $i<\omega$. Then, $\mathrm{d}(X)>\mathrm{d}(\phi(\overline{x}, \overline{b_i}))$.
\end{lemma}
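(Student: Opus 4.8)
The plan is to exploit the Fubini and additivity axioms together with positivity to get a quantitative contradiction from the assumption that $\mathrm{d}(X)=\mathrm{d}(\phi(\overline{x},\overline{b}_i))=:k$. First I would observe that, since the $\overline{b}_i$ form an indiscernible sequence, the definable function $\mu$ assigns a common value $\nu:=\mu(\phi(\overline{x},\overline{b}_i))$ to every member of the family, and $\nu>0$ by positivity. By compactness, the inconsistency of $\{\phi(\overline{x},\overline{b}_i)\mid i<\omega\}$ yields a finite $N$ such that $\{\phi(\overline{x},\overline{b}_i)\mid i<N\}$ is inconsistent; by indiscernibility we may in fact pull back to any $N$-element subset of the sequence, which lets us build arbitrarily many pairwise ``$k$-generically disjoint'' translates of a $k$-dimensional set inside $X$.

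The key step is to convert inconsistency of an $N$-fold intersection into a bound on how much of the measure in dimension $k$ can overlap. Concretely, consider the sets $Y_i:=\phi(M^{|\overline{x}|},\overline{b}_i)\subseteq X$ for $i<M$ (with $M$ large, to be chosen). Each $Y_i$ has dimension $k$ and measure $\nu$. Because every $N$ of them have empty (hence $<k$-dimensional, in fact empty) intersection, a standard inclusion–exclusion / counting argument at the level of dimension $k$ shows that the ``$k$-dimensional mass'' of $\bigcup_{i<M} Y_i$ inside $X$ is at least roughly $\frac{M}{N-1}\nu$. Here I would be careful to run the counting only among those $Y_i$ contributing in dimension $k$: throw away lower-dimensional pieces using the additivity axiom (which discards measure of strictly smaller dimension), and on the dimension-$k$ part use finite additivity after refining the $Y_i$ into a common Boolean combination. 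Since $\bigcup_{i<M}Y_i\subseteq X$ and $\mathrm{d}(X)=k$, additivity/monotonicity gives $\mu(X)\ge \frac{M}{N-1}\nu$. Letting $M\to\infty$ contradicts finiteness of $\mu$ on the fixed definable set $X$ (its measure is a single real number). Hence $\mathrm{d}(X)>k$.

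Alternatively, and perhaps more cleanly, I would set up a single application of Fubini: form the definable set $Z=\{(\overline{x},i)\mid \phi(\overline{x},\overline{b}_i),\ i<M\}$ living in $X\times\{0,\dots,M-1\}$, project to the second coordinate to see each fibre has measure $\nu$ and dimension $k$, so $\mu(Z)=M\nu$ and $\mathrm{d}(Z)=k$; then project $Z$ to the first coordinate into $X$, note the fibre over $\overline{a}\in X$ is the finite set $\{i<M\mid \models\phi(\overline{a},\overline{b}_i)\}$, which by the $N$-inconsistency has size $<N$, hence dimension $0$ and bounded measure $<N$; Fubini (or its iterated form together with additivity over the finitely many possible fibre-sizes) then forces $M\nu=\mu(Z)\le N\cdot\mu(X)$, and again $M\to\infty$ is absurd.

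The main obstacle I expect is the bookkeeping in the ``dimension-$k$ counting'': Fubini as stated requires the fibres of the projection to have a \emph{constant} value $(k,\nu)$, whereas here the relevant fibres over $X$ have varying (but uniformly bounded) finite size, so one must partition $X$ into the finitely many definable pieces on which the fibre size is constant, apply Fubini on each piece, and reassemble using additivity — and one must make sure the pieces of $X$ used carry dimension exactly $k$ so that no mass is silently discarded. Handling this uniformity, and being careful that the sequence $(\overline{b}_i)$ can be taken inside $M$ (or passing to $\mathbb M$ and using that $MS$-measurability is a property of the theory, Remark \ref{eqMS}), is the only delicate point; everything else is a direct application of the axioms in Definition \ref{MScond} plus positivity and finiteness.
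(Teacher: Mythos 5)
Your argument is correct and is essentially the standard proof of this lemma, which the paper does not reprove but cites as Lemma 3.5 of Elwes--Macpherson: indiscernibility plus invariance give a common value $(k,\nu)$ with $\nu>0$ for all the instances, compactness plus indiscernibility give $N$-inconsistency, and the multiplicity-at-most-$(N-1)$ counting argument (discarding lower-dimensional overlaps via additivity) forces $\mu(X)\geq M\nu/(N-1)$ for every $M$, contradicting finiteness of $\mu(X)$. The only cosmetic point is in your Fubini variant: the index set $\{0,\dots,M-1\}$ is not a definable subset of a power of $M$, so one should instead fibre over the finite (hence definable with parameters) set $\{\overline{b}_0,\dots,\overline{b}_{M-1}\}$; your primary counting argument does not need this.
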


From this result, it is easy to prove that dimension independence implies non-forking independence. 

\begin{prop}\label{indepimpl} Suppose that $\overline{a}\indep{C}{d} B$. Then, $\mathrm{tp}(\overline{a}/BC)$ does not divide over $C$. Hence, by simplicity, $\overline{a}\indep{C}{d} B$ implies $\overline{a}\indep{C}{f}B$, where $\indep{C}{f}$ is non-forking independence. 
\end{prop}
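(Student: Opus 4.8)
The plan is to reduce the claim to a contrapositive application of Lemma~\ref{divlemma}. Suppose $\overline{a}\indep{C}{d} B$, i.e. $\mathrm{d}(\overline{a}/BC)=\mathrm{d}(\overline{a}/C)$, but towards a contradiction that $\mathrm{tp}(\overline{a}/BC)$ divides over $C$. Since we work in an $\omega$-categorical context, fix a formula $\phi(\overline{x},\overline{b})\in\mathrm{tp}(\overline{a}/BC)$ (with $\overline{b}$ enumerating $B$, and suppressing the parameters from $C$) that witnesses the dividing: there is a $C$-indiscernible sequence $(\overline{b}_i)_{i<\omega}$ with $\overline{b}_0=\overline{b}$ such that $\{\phi(\overline{x},\overline{b}_i)\mid i<\omega\}$ is inconsistent. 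We may moreover choose $\phi$ so that $\mathrm{d}(\phi(\overline{x},\overline{b}))=\mathrm{d}(\overline{a}/BC)$; this is possible because the dimension of a type is the minimum of the dimensions of its defining formulas, and $\omega$-categoricity lets us intersect finitely many formulas of $\mathrm{tp}(\overline{a}/BC)$ into a single one isolating the type, which then realises this minimum.

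Next I would set $X=\phi(M^{\vert\overline{x}\vert}, \overline{b}')$ where $\phi(\overline{x},\overline{b}')$ isolates $\mathrm{tp}(\overline{a}/C)$ (again suppressing $C$-parameters); note $\phi(M^{\vert\overline{x}\vert},\overline{b}_i)\subseteq X$ for every $i$, since each $\overline{b}_i\equiv_C\overline{b}$ implies $\phi(\overline{x},\overline{b}_i)\vdash\mathrm{tp}(\overline{a}/C)$ by $C$-indiscernibility and the fact that $\mathrm{tp}(\overline{a}/C)$ is a $C$-invariant consequence of $\phi(\overline{x},\overline{b})$. By $C$-invariance of the dimension we also get $\mathrm{d}(\phi(\overline{x},\overline{b}_i))=\mathrm{d}(\phi(\overline{x},\overline{b}))=\mathrm{d}(\overline{a}/BC)$ for all $i$. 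The hypotheses of Lemma~\ref{divlemma} are now met, so it yields
\[
\mathrm{d}(\overline{a}/C)=\mathrm{d}(X)>\mathrm{d}(\phi(\overline{x},\overline{b}_i))=\mathrm{d}(\overline{a}/BC),
\]
contradicting $\overline{a}\indep{C}{d}B$. Hence $\mathrm{tp}(\overline{a}/BC)$ does not divide over $C$. Since $\mathcal{M}$ is $MS$-measurable it is simple, so non-dividing is the same as non-forking over any set, giving $\overline{a}\indep{C}{f}B$.

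The main obstacle is bookkeeping rather than conceptual: making sure that the formula $\phi$ witnessing dividing can be taken to both lie in $\mathrm{tp}(\overline{a}/BC)$ \emph{and} have dimension exactly $\mathrm{d}(\overline{a}/BC)$, and that the enclosing set $X$ really contains all the $\phi(M^{\vert\overline{x}\vert},\overline{b}_i)$. Both points are handled by $\omega$-categoricity (finitely many formulas suffice to isolate a type, and the minimum in the definition of $\mathrm{d}$ of a type is attained by the isolating formula) together with $C$-invariance of $\mathrm{d}$; one should also recall that if the original witness of dividing is a formula over $BC$ not yet of the minimal dimension, one may conjunct it with an isolating formula of $\mathrm{tp}(\overline{a}/BC)$ without destroying the dividing (a conjunct of a formula in a dividing configuration still divides along the same sequence). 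A minor subtlety is the passage from dividing to forking, which needs simplicity of $\mathrm{Th}(\mathcal{M})$; this is exactly the invocation "by simplicity" in the statement, and it is available since $MS$-measurability implies supersimplicity.
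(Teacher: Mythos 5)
Your argument is essentially the paper's: pass to the contrapositive, exhibit the dividing formula inside a $C$-definable set of dimension $\mathrm{d}(\overline{a}/C)$, and apply Lemma~\ref{divlemma} to force $\mathrm{d}(\overline{a}/BC)<\mathrm{d}(\overline{a}/C)$; the final passage from non-dividing to non-forking via (super)simplicity is also as in the paper. The one discrepancy is that you invoke $\omega$-categoricity (isolating formulas for $\mathrm{tp}(\overline{a}/BC)$ and $\mathrm{tp}(\overline{a}/C)$), whereas the proposition sits in Subsection~\ref{dimind}, which treats arbitrary $MS$-measurable structures, and is quoted later in that generality (Theorem~\ref{dimtheom}). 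The paper avoids this by taking $\chi(\overline{x})\in\mathrm{tp}(\overline{a}/C)$ of minimal dimension (the minimum is attained since $\mathrm{d}$ is $\mathbb{N}$-valued, no isolation needed) and replacing the dividing formula by $\phi(\overline{x},\overline{d}_i)\wedge\chi(\overline{x})$, so that containment in $X=\chi(\mathbb{M}^{\vert\overline{x}\vert})$ holds by construction rather than because $\phi$ implies the whole type over $C$; your own remark that conjuncting preserves the dividing configuration is exactly what makes this repair work, so the gap is cosmetic rather than structural, but you should state the proof without the $\omega$-categoricity crutch.
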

\begin{proof} Suppose by contrapositive that $\mathrm{tp}(\overline{a}/BC)$ divides over $C$. So there is $\phi(\overline{x}, \overline{d})\in\mathrm{tp}(\overline{a}/BC)$ such that $\{\phi(\overline{x}, \overline{d}_i)\vert i<\omega\}$ is $k$-inconsistent and $(\overline{d}_i)_{i<\omega}$ is $C$-indiscernible with $\overline{d}_0=\overline{d}$. Let $\chi(\overline{x})$ be a formula defined over $C$ witnessing $\mathrm{d}(\overline{a}/C)=k$. Consider $\phi(\overline{x}, \overline{d_i})\wedge\chi(\overline{x})$. Since $\phi(\overline{x}, \overline{d})\in\mathrm{tp}(\overline{a}/BC)$, we have that $\vDash\exists \overline{x}\phi(\overline{x}, \overline{d})\wedge \chi(\overline{x})$, and since $\overline{d}_i\equiv_C\overline{d}$, $\vDash\exists \overline{x}\phi(\overline{x}, \overline{d}_i)\wedge \chi(\overline{x})$. Thus, $\phi(\mathbb{M}^{\vert\overline{x}\vert}, \overline{d_i})\wedge\chi(\mathbb{M}^{\vert\overline{x}\vert})\subseteq \chi(\mathbb{M}^{\vert\overline{x}\vert})$, and $\{\phi(\overline{x}, \overline{d_i})\wedge\chi(\overline{x}) \vert i<\omega\}$ is inconsistent. The conditions of Lemma \ref{divlemma} are met and 
\[\mathrm{d}(\chi(\mathbb{M}^{\vert\overline{x}\vert})>\mathrm{d}(\phi(\mathbb{M}^{\vert\overline{x}\vert}, \overline{d_i})\wedge\chi(\mathbb{M}^{\vert\overline{x}\vert}))=\mathrm{d}(\phi(\mathbb{M}^{\vert\overline{x}\vert}, \overline{d})\wedge\chi(\mathbb{M}^{\vert\overline{x}\vert}))=\mathrm{d}(\phi(\mathbb{M}^{\vert\overline{x}\vert}, \overline{d})).\]
Where the second equality holds by invariance of the dimension and the last equality holds since $\phi(\mathbb{M}^{\vert\overline{x}\vert}, \overline{d})\subseteq \chi(\mathbb{M}^{\vert\overline{x}\vert})$. By our choice of $\chi(\overline{x})$, we have that $\mathrm{d}(\overline{a}/BC)<\mathrm{d}(\overline{a}/C)$, implying that $\overline{a}$ is not $\mathrm{d}$-independent from $B$ over $C$. Our claim then holds by contrapositive. 
\end{proof}

The other implication holds by the following result of Kim and Pillay \cite{KPsimp}[Theorem 4.2, Claim I]:
\begin{lemma} Let $T$ be an arbitrary theory and $\indep{}{*}$ be a notion of independence. Suppose that $a\indep{C}{f}B$. Then, $a\indep{C}{*}B$.
\end{lemma}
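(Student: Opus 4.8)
The plan is to recognise this as Claim~I in the proof of the Kim--Pillay theorem \cite[Theorem~4.2]{KPsimp} and to reproduce their argument. This is the part of that theorem that needs only the basic axioms of a notion of independence from \cite[Definition~4.1]{KPsimp} (invariance, symmetry, transitivity, base monotonicity, local and finite character, and the extension property), not the independence theorem — consistent with the fact that, for $\indep{}{d}$, these axioms follow from the bare properties of an invariant dimension, as in the previous proposition.

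Suppose $\overline{a}\indep{C}{f}\overline{b}$, and assume towards a contradiction that $\overline{a}\indep{C}{*}\overline{b}$ fails; after reducing to $\overline{b}$ finite by finite character, I would argue as follows. Using the extension property of non-forking, build a $C$-indiscernible Morley sequence $(\overline{a}_i)_{i<\kappa}$ in $\mathrm{tp}(\overline{a}/C\overline{b})$ over $C$ — so $\overline{a}_i\equiv_{C\overline{b}}\overline{a}$ and $\overline{a}_i\indep{C}{f}\overline{b}\,\overline{a}_{<i}$ for each $i$ — with $\kappa>|T|$. Since $\overline{a}_i\equiv_{C\overline{b}}\overline{a}$, invariance of $\indep{}{*}$ shows $\overline{a}_i\indep{C}{*}\overline{b}$ fails, hence by symmetry $\overline{b}\indep{C}{*}\overline{a}_i$ fails, for every $i$. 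Now apply local character of $\indep{}{*}$ to $\overline{b}$ and the set $\{\overline{a}_i:i<\kappa\}$: this yields a subset of size at most $|T|$ over which $\overline{b}$ is $\indep{}{*}$-independent from the whole sequence, and since $\kappa>|T|$ that subset omits some index $j$. A standard argument then uses transitivity and base monotonicity of $\indep{}{*}$, together with the $C$-indiscernibility of the sequence (and the fact that it is a \emph{Morley} sequence), to descend the base to $C$, giving $\overline{b}\indep{C}{*}\overline{a}_j$ and contradicting the previous sentence.

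The step I expect to be the real obstacle is this final manoeuvre, where local character is played off against indiscernibility to bring the base down to $C$: one must track exactly which members of the Morley sequence appear in the base produced by local character and then find an automorphism sending $\overline{a}_j$ to $\overline{a}_0$ that fixes those members, which is precisely where it matters that the sequence is a Morley sequence and not merely $C$-indiscernible. Since this is all standard and is carried out in detail in \cite{KPsimp}, I would keep the result as a cited lemma rather than reproducing the argument in the paper.
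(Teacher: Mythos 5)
You have correctly identified the result as Claim I in the proof of \cite[Theorem 4.2]{KPsimp}, and your final recommendation (keep it as a cited lemma) is exactly what the paper does: no proof is given there beyond the citation. At that level you and the author agree.

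However, the sketch you offer of the Kim--Pillay argument does not go through, and the failure is precisely at the step you yourself flag as the crux. After local character hands you $\overline{b}\indep{C\overline{a}_{I_0}}{*}\overline{a}_j$ for some small $I_0$ and $j\notin I_0$, the only way transitivity and base monotonicity can bring the base down to $C$ is via the missing conjunct $\overline{b}\indep{C}{*}\overline{a}_{I_0}$. But under your reductio hypothesis $\overline{b}\indep{C}{*}\overline{a}_i$ fails for \emph{every} $i$, so by monotonicity $\overline{b}\indep{C}{*}\overline{a}_{I_0}$ fails as well: the descent is blocked by the very statement you are trying to contradict, i.e.\ the argument is circular at its key step. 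Indiscernibility and automorphisms of the Morley sequence can only permute the $\overline{a}_i$ among themselves (and an automorphism fixing $\overline{a}_{I_0}$ pointwise while sending $\overline{a}_j$ to $\overline{a}_0$ need not even exist, e.g.\ if $0\in I_0$); in any case such an automorphism yields another instance of $*$-independence \emph{over the enlarged base}, never over $C$. The Morley property of the sequence is a statement about $\indep{}{\mathrm{f}}$ and cannot be fed into transitivity for $\indep{}{*}$ without assuming the implication being proved. Note also that your sketch never invokes the extension axiom or the independence theorem over models, whereas in \cite{KPsimp} this inclusion is exactly where those axioms are used: extension produces some $\overline{a}'\equiv_C\overline{a}$ with $\overline{a}'\indep{C}{*}\overline{b}$, the already-established converse inclusion identifies $\mathrm{tp}(\overline{a}'/C\overline{b})$ as a non-forking extension of $\mathrm{tp}(\overline{a}/C)$, and the amalgamation axiom is what transfers $*$-independence from that extension to the given one. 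The local-character-plus-long-sequence argument you describe belongs to the \emph{other} inclusion (that $\indep{}{*}$ implies non-dividing), which the paper handles separately for $\indep{}{\mathrm{d}}$ in Proposition \ref{indepimpl}. Since the result is cited rather than proved, none of this affects the paper, but as a proof the sketch has a genuine gap.
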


And this yields the desired result:

\begin{theorem}\label{dimtheom} Let $\mathcal{M}$ be an $MS$-measurable structure. Then, dimension independence is the same as non-forking independence.
\end{theorem}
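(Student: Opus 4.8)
The plan is to combine the two implications just established. First I would recall that Proposition \ref{indepimpl} gives one direction: in an $MS$-measurable structure, $\overline{a}\indep{C}{d}B$ implies $\overline{a}\indep{C}{f}B$, since dimension independence implies non-dividing and hence, by simplicity of $MS$-measurable theories (which are supersimple), non-forking independence. The simplicity here is free: being $MS$-measurable implies being supersimple of finite $SU$-rank, as noted in the introduction following \cite{EM}.

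For the converse, I would invoke the lemma of Kim and Pillay \cite{KPsimp} quoted immediately above: if $\indep{}{*}$ is any notion of independence (in the sense of \cite[Definition 4.1]{KPsimp}), then $\overline{a}\indep{C}{f}B$ implies $\overline{a}\indep{C}{*}B$. By the Proposition stated earlier in Subsection \ref{dimind}, $\mathrm{d}$-independence is a notion of independence in exactly that sense, so taking $\indep{}{*}=\indep{}{d}$ yields that $\overline{a}\indep{C}{f}B$ implies $\overline{a}\indep{C}{d}B$.

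Putting the two implications together, $\overline{a}\indep{C}{d}B \iff \overline{a}\indep{C}{f}B$ for all tuples $\overline{a}$ and small sets $B, C$, which is precisely the assertion that dimension independence coincides with non-forking independence. There is no real obstacle remaining — all the work has been done in Lemma \ref{divlemma}, Proposition \ref{indepimpl}, and the cited Kim–Pillay results; the theorem is just the conjunction of the two directions. The only point requiring a word of care is making explicit that $MS$-measurable theories are simple (indeed supersimple), so that Proposition \ref{indepimpl} actually delivers non-forking independence rather than merely non-dividing; this is where the appeal to \cite{EM} enters.
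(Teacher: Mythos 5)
Your proposal is correct and is essentially the paper's own argument: the theorem is obtained exactly by combining Proposition \ref{indepimpl} (dimension independence implies non-dividing, hence non-forking by supersimplicity of $MS$-measurable theories) with the quoted Kim--Pillay lemma applied to $\mathrm{d}$-independence as a notion of independence. Nothing further is needed.
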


\begin{remark} An alternative way to prove that dimension independence is the same of non-forking independence is by noting how the independence theorem is substantially proven in Theorem 2.18 of \cite{approxsubg}.
\end{remark}

\subsection{\texorpdfstring{$MS$}{MS}-measurability in an \texorpdfstring{$\omega$}{omega}-categorical context}\label{omegameas}

In this subsection, we introduce some basic equations which must be satisfied in $\omega$\-/categorical $MS$-measurable structures through Corollary \ref{amacor}. Observations regarding these equations allow us to prove that in an $\omega$-categorical context we may always take the dimension part of an $MS$-dimension-measure to be $SU$-rank, as shown in corollary \ref{SUok}.\\

The following result is a standard fact about $\omega$-categorical $MS$-measurable structures. As far as I can tell, a version of this is first proven in Elwes' PhD thesis  \cite[Lemma 5.2.1]{ElwesPhD}:

\begin{lemma} Let $\mathcal{M}$ be $\omega$-categorical and MS-measurable. Let $B\supseteq A$ be finite subsets of $M$. Suppose $p$ is a partial type over $A$. By $\omega$-categoricity, $p$ has finitely many complete extensions to $B$. Let $p_1, \dots, p_n$ be those which do not fork over $A$. Then, 
\begin{equation}
    \mu(p)=\sum_{i=1}^n \mu(p_i).
\end{equation}
\end{lemma}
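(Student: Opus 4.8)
The plan is to decompose the definable set corresponding to $p$ according to the finitely many complete types over $B$ extending it, exploit additivity of $\mu$ to split this into contributions of maximal dimension, and then identify exactly which extensions contribute as the non-forking ones. Concretely, let $\phi(\overline{x})$ be a formula over $A$ isolating $p$ (we may take a single formula by $\omega$-categoricity / Ryll--Nardzewski). The complete extensions $q_1,\dots,q_N$ of $p$ to $B$ are isolated by formulas $\psi_1(\overline{x}),\dots,\psi_N(\overline{x})$ over $B$, pairwise inconsistent, whose disjunction is (equivalent to) $\phi(\overline{x})$. Hence $\phi(M^{|\overline{x}|})$ is the disjoint union of the $\psi_j(M^{|\overline{x}|})$, and since $\mu(p)=\mu(\phi(\overline{x}))$ and $\mu(q_j)=\mu(\psi_j(\overline{x}))$ in the $\omega$-categorical setting, the additivity axiom (item 3 of Definition \ref{MScond}) gives
\[
\mu(p)=\sum_{j\,:\,\mathrm{d}(q_j)=\mathrm{d}(p)}\mu(q_j),
\]
because only the extensions of maximal dimension survive the additivity rule, and $\mathrm{d}(q_j)\le \mathrm{d}(p)$ for all $j$ with equality for at least one $j$ (by the Union axiom applied to the dimension, $\mathrm{d}(p)=\max_j \mathrm{d}(q_j)$).

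It therefore remains to show that the set of indices $j$ with $\mathrm{d}(q_j)=\mathrm{d}(p)$ is precisely $\{1,\dots,n\}$, i.e.\ the non-forking extensions. This is exactly the statement that an extension $q_j$ of $p$ to $B\supseteq A$ does not fork over $A$ if and only if $\mathrm{d}(q_j)=\mathrm{d}(p)$; equivalently, $\overline{a}\indep{A}{f}B$ iff $\mathrm{d}(\overline{a}/AB)=\mathrm{d}(\overline{a}/A)$, where $\overline{a}\models q_j$. But this is precisely Theorem \ref{dimtheom}: dimension-independence coincides with non-forking independence. So for a realisation $\overline{a}\models q_j$, $\mathrm{d}(\overline{a}/B)=\mathrm{d}(\overline{a}/A)$ iff $\overline{a}\indep{A}{d}B$ iff $\overline{a}\indep{A}{f}B$ iff $q_j$ does not fork over $A$. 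Plugging this identification into the displayed sum yields $\mu(p)=\sum_{i=1}^n\mu(p_i)$, as required.

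The only genuine subtlety — and the step I would be most careful about — is the bookkeeping around the dimension/measure conventions: one must check that $\mathrm{d}(p)=\max_j\mathrm{d}(q_j)$ (so that the top-dimensional extensions exist and the additivity rule does collapse the lower-dimensional terms), and that $\mu$ applied to a type genuinely agrees with $\mu$ applied to an isolating formula, as recorded in the discussion following Definition \ref{MScond}. Everything else is a direct application of finite additivity of $\mu$ over a finite partition into isolated complete types together with Theorem \ref{dimtheom}; no induction or approximation is needed. I expect the proof to be short, with the load-bearing input being Theorem \ref{dimtheom} and hence, ultimately, Lemma \ref{divlemma}.
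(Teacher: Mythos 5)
Your proposal is correct and follows essentially the same route as the paper: partition the set of realisations of $p$ into the finitely many isolated complete extensions over $B$, apply the additivity axiom so that only the extensions of maximal dimension contribute, and invoke Theorem \ref{dimtheom} to identify these with the non-forking extensions. The bookkeeping points you flag (that $\mathrm{d}(p)=\max_j\mathrm{d}(q_j)$ and that $\mu$ of a type agrees with $\mu$ of an isolating formula) are handled the same way in the paper.
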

\begin{proof}
Suppose the complete extensions of $p$ to $B$ are  $p_1, \dots, p_m$ with $m\geq n$ where $p_1, \dots, p_n$ have maximal dimension. By Theorem \ref{dimtheom}, these are the non-forking extensions of $p$ to $B$. Consider the sets
\[p_i(M^{\vert\overline{x}\vert}):=\left\{\overline{c}\in M^{\vert\overline{x}\vert} \big\vert \overline{c}\vDash p(\overline{x})\right\}.\]
By $\omega$-categoricity, these are disjoint definable sets such that $\bigcup_{i=1}^m p_i(M^{\vert\overline{x}\vert})=p(M^{\vert\overline{x}\vert})$. Hence, by additivity:
\[\mu(p)=\mu\left(\bigcup_{i=1}^n p_i(M^{\vert\overline{x}\vert})\cup \bigcup_{i>n}^m p_i(M^{\vert\overline{x}\vert})\right)=\mu\left(\bigcup_{i=1}^n p_i(M^{\vert\overline{x}\vert})\right)=\sum_{i=1}^n \mu(p_i).\]
Where the second last equality holds by additivity in the case $\mathrm{d}(X)<\mathrm{d}(Y)$, and the last equality due to additivity in the case of $\mathrm{d}(X)=\mathrm{d}(Y)$.
\end{proof}

By an easy application of Fubini (see Remark \ref{Fub}), we get equations in terms of types over the empty set and obtain:

\begin{corollary}\label{amacor} Let $\mathcal{M}$ be $\omega$-categorical and MS-measurable. Take $\overline{a}, \overline{b}, \overline{c}$ to be tuples from $M$. Let $\mathrm{tp}(\overline{c}_1/\overline{a}\overline{b}), \dots, \mathrm{tp}(\overline{c}_n/\overline{a}\overline{b})$ be the finitely many complete non-forking extensions of $\mathrm{tp}(\overline{c}/\overline{a})$ to $\overline{a}\overline{b}$. Then,
\begin{equation}\label{amacoreq}
    \frac{\mu(\overline{ac}) \mu( \overline{ab})}{\mu( \overline{a})}=\sum_{i=1}^n \mu(\overline{ab c_i}).
\end{equation}
\end{corollary}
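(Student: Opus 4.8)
The plan is to reduce Corollary~\ref{amacor} to the preceding lemma by an application of Fubini, converting the statement about $\mu(p)$ for a partial type $p$ over $\overline{a}\overline{b}$ into a statement about measures of tuples over $\emptyset$. Concretely, I would set $A=\overline{a}$ and $B=\overline{a}\overline{b}$ and take $p=\mathrm{tp}(\overline{c}/\overline{a})$. By $\omega$-categoricity $p$ has finitely many complete extensions to $\overline{a}\overline{b}$, and by Theorem~\ref{dimtheom} the non-forking ones are exactly those of maximal dimension, say $\mathrm{tp}(\overline{c}_1/\overline{a}\overline{b}),\dots,\mathrm{tp}(\overline{c}_n/\overline{a}\overline{b})$. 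The lemma then gives
\[
\mu(\overline{c}/\overline{a}) \;=\; \mu(p) \;=\; \sum_{i=1}^n \mu(p_i) \;=\; \sum_{i=1}^n \mu(\overline{c}_i/\overline{a}\overline{b}).
\]

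Next I would multiply both sides by $\mu(\overline{a}\overline{b})$ and rewrite each term using the Fubini identity of Remark~\ref{Fub}, namely $\mu(\overline{x}\overline{y}) = \mu(\overline{x}/\overline{y})\mu(\overline{y})$ in an $\omega$-categorical $MS$-measurable structure (with the conventions $\mathrm{d}(\emptyset)=0$, $\mu(\emptyset)=1$ handling the base case). On the left, I want $\mu(\overline{c}/\overline{a})$ expressed via tuples over $\emptyset$: write $\mu(\overline{a}\overline{c}) = \mu(\overline{c}/\overline{a})\mu(\overline{a})$, hence $\mu(\overline{c}/\overline{a}) = \mu(\overline{a}\overline{c})/\mu(\overline{a})$ (here positivity is what guarantees $\mu(\overline{a})>0$, so the division is legitimate). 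On the right, $\mu(\overline{c}_i/\overline{a}\overline{b})\,\mu(\overline{a}\overline{b}) = \mu(\overline{a}\overline{b}\overline{c}_i)$ directly by Fubini. Combining,
\[
\frac{\mu(\overline{a}\overline{c})}{\mu(\overline{a})}\,\mu(\overline{a}\overline{b}) \;=\; \mu(\overline{c}/\overline{a})\,\mu(\overline{a}\overline{b}) \;=\; \sum_{i=1}^n \mu(\overline{c}_i/\overline{a}\overline{b})\,\mu(\overline{a}\overline{b}) \;=\; \sum_{i=1}^n \mu(\overline{a}\overline{b}\overline{c}_i),
\]
which is exactly \eqref{amacoreq} after rearranging the left-hand side as $\mu(\overline{a}\overline{c})\mu(\overline{a}\overline{b})/\mu(\overline{a})$.

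There is essentially no hard step here: the content is all in the lemma (which in turn rests on Theorem~\ref{dimtheom}), and the corollary is a bookkeeping translation via Fubini. The one point requiring a word of care is that $\mathrm{tp}(\overline{c}_i/\overline{a}\overline{b})$ being a non-forking extension of $\mathrm{tp}(\overline{c}/\overline{a})$ must coincide with it being a maximal-dimension extension, which is precisely the identification of dimension-independence with non-forking independence from Theorem~\ref{dimtheom}; I would state this explicitly so that the indexing matches the lemma. I would also note that $\mu(\overline{a})\neq 0$ by positivity, so that dividing by it is meaningful, and that the formula specialises correctly when some of the tuples are empty using the conventions of Remark~\ref{Fub}.
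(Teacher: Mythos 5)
Your proposal is correct and is essentially the paper's own argument: the paper derives the corollary from the preceding lemma precisely by the Fubini identities $\mu(\overline{a}\overline{c})=\mu(\overline{c}/\overline{a})\mu(\overline{a})$ and $\mu(\overline{a}\overline{b}\overline{c}_i)=\mu(\overline{c}_i/\overline{a}\overline{b})\mu(\overline{a}\overline{b})$ from Remark \ref{Fub}. Your explicit remarks on positivity and on identifying non-forking extensions with maximal-dimension extensions via Theorem \ref{dimtheom} match the intended reading.
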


We shall use the above corollary in our proof of non-measurability. There is a partial converse to this corollary. In fact, if an $\omega$-categorical structure has a function $\mu: S(\emptyset)\to\mathbb{R}^{>0}$ satisfying equations of the form of (\ref{amacoreq}) and an algebraicity condition, then it is $MS$-measurable. This can be extracted from the proof of Theorem \ref{changedim}. In this theorem, we show that in an $\omega$-categorical $MS$-measurable structure we may choose our dimension to be any definable dimension yielding non-forking independence as its notion of independence. 

\begin{theorem}\label{changedim} Suppose that $\mathcal{M}$ is an $\omega$-categorical $MS$-measurable structure with dimension-measure $h=(\mathrm{d}, \mu)$. Let $D:\mathrm{Def}(M)\to \mathbb{N}$ be any definable dimension for $\mathcal{M}$ whose induced notion of independence is non-forking independence. Then, $\mathcal{M}$ has a dimension-measure $h'=(D, \mu')$, where $\mu'$ agrees with $\mu$ on complete types over finite sets of parameters. 
\end{theorem}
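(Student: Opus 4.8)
The plan is to define $\mu'$ on complete types over finite parameter sets by insisting that it agree with $\mu$ there, and then to extend it to all of $\mathrm{Def}(M)$ in the only way additivity allows; the work is then to verify that $h'=(D,\mu')$ satisfies the four clauses of Definition \ref{MScond}. The key observation making this possible is that, by Theorem \ref{dimtheom} and the hypothesis on $D$, both $\mathrm{d}$ and $D$ induce non-forking independence, so by Corollary \ref{SUok} (or directly by the argument behind it) $\mathrm{d}$ and $D$ differ only by how they \emph{stratify} types: for each complete type $p\in S_{\overline{x}}(\emptyset)$ we still have that $p$ has positive $\mu$-measure, and the non-forking extensions of $p$ to any finite $B$ are detected the same way by $D$ as by $\mathrm{d}$. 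Concretely, for $X=\phi(M^{\vert\overline{x}\vert},\overline{a})\in\mathrm{Def}(M)$, write $X$ as the disjoint union of the finitely many complete types over $\overline{a}$ it contains (using $\omega$-categoricity), keep only those of maximal $D$-dimension, and set $\mu'(X)$ to be the sum of their $\mu'$-values; one checks this is well-defined and $\overline{a}$-invariant because $\mu$ is.

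First I would record that $D$, being a definable dimension, satisfies Algebraicity, Union and Additivity, so clause (2) of Definition \ref{MScond} holds for $h'$ (points get $\mu'$-value $1$ by construction) and the dimension bookkeeping in clauses (3) and (4) is automatic from Additivity of $D$. Next I would verify clause (3), additivity of $\mu'$: for disjoint definable $X,Y$ in the same variable, decompose both into complete types over a common finite parameter set; the maximal-$D$-dimension types of $X\cup Y$ are exactly the maximal ones among those of $X$ and those of $Y$, and since $\mu'$ on $X\cup Y$ is by definition the sum over precisely those, the two cases ($D(X)=D(Y)$ and $D(X)<D(Y)$) fall out. Then I would verify clause (4), Fubini: given $X\subseteq M^n$ and a projection $\pi$ with $h'(\pi^{-1}(\overline{a})\cap X)=(k,\nu)$ constant on $\pi(X)$, I would use the corresponding property of $h=(\mathrm d,\mu)$ together with the fact (the heart of Corollary \ref{amacor}, via Remark \ref{Fub}) that on complete types Fubini becomes the multiplicativity $\mu(\overline b\overline c)=\mu(\overline c/\overline b)\mu(\overline b)$, and that $\mu'$ and $\mu$ agree on complete types; decomposing $X$ into complete types over the parameters and pushing forward, the $D$-maximal fibres sit over the $D$-maximal part of $\pi(X)$, and summing the multiplicative identity over those types gives $\mu'(X)=\mu'(\pi(X))\,\nu$. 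Finally, positivity of $\mu'$ is immediate since it is a finite sum of positive $\mu$-values, and definability/finiteness of $h'$ follows from $\omega$-categoricity (Ryll--Nardzewski) as usual.

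The main obstacle I expect is clause (4): one must be careful that replacing $\mathrm{d}$ by $D$ does not change \emph{which} fibres are "generic", i.e. that the fibres of maximal $D$-dimension are exactly those lying over points of $\pi(X)$ where the generic fibre is attained, and that this is compatible with the global $D$-dimension count $D(X)=D(\pi(X))+k$. This is precisely where the hypothesis that $D$ induces non-forking independence is used: it guarantees that $D$-genericity of a fibre over $\overline a$ within $X$ is equivalent to $\mathrm{tp}(\overline c/\overline a\,\overline b)$ being a non-forking extension of $\mathrm{tp}(\overline c/\overline b)$ for $\overline c$ in the fibre, which is a forking-theoretic condition independent of the particular definable dimension chosen. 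Once this equivalence is isolated as a lemma, the verification of all four clauses is a routine bookkeeping exercise of the same flavour as the proof of Corollary \ref{amacor}.
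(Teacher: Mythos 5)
Your proposal is correct and follows essentially the same route as the paper: define $\mu'(X)$ as the sum of the $\mu$-values of the complete types of maximal $D$-dimension in a decomposition of $X$ over its parameters, and use the fact that maximal-$D$-dimension extensions, non-forking extensions, and maximal-$\mathrm{d}$-dimension extensions of a type all coincide (Theorem \ref{dimtheom} plus the hypothesis on $D$) to deduce well-definedness from the additivity of $\mu$, with the remaining clauses of Definition \ref{MScond} checked by routine bookkeeping. One caveat: do not cite Corollary \ref{SUok} in this argument, since that corollary is deduced \emph{from} this theorem; but as your parenthetical indicates, what you actually use is only Theorem \ref{dimtheom} together with the hypothesis that $D$ induces non-forking independence, which is exactly the paper's argument.
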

\begin{proof} By Ryll-Nardzewski, $D$ is invariant and finite. The dimension-measure $h=(d, \mu)$ induces a function on types over finite sets of parameters $\mu^*:S_{\mathrm{fin}}(M)\to\mathbb{R}^{>0}$ given by $\mu^*(\overline{a}/\overline{b})=\mu(\overline{a}/\overline{b})$. We can write any set $X$ definable over $\overline{a}$ as a finite disjoint union of the sets of realisations of complete types $p_1, \dots, p_m$ over $\overline{a}$. Say that $p_1, \dots, p_n$ have maximal $D$-dimension. Then, we define,   
\[\mu'(X)=\sum_{i=1}^n \mu^*(p_i).\]
We need to show that our definition does not depend on the choice of parameters over which $X$ is defined. Suppose that $X$ is defined both over $\overline{a}$ and $\overline{b}$. We may assume without loss of generality that $\overline{a}\subseteq\overline{b}$. Each $p_i$ over $\overline{a}$ has finitely many complete extensions of maximal $D$-dimension to $\overline{b}$, say $p'_{i,1}, \dots, p'_{i, m_i}$, which, by assumption, are the non-forking extensions of $p_i$ to $\overline{b}$. By the dimension theorem \ref{dimtheom}, these are the extensions of $p_i$ of maximal $d$-dimension, and so, by $MS$-measurability, we know that
\[\mu(p_i)=\sum_{j=1}^{m_i}\mu(p'_{ij}).\]
But then, 
\[\sum_{i=1}^n\mu(p_i)=\sum_{i=1}^n\sum_{j=1}^{m_j}\mu(p'_{ij}),\]
where the equation on the left is the definition of $\mu'(X)$ as defined over $\overline{a}$ and the equation on the right is the definition of $\mu'$ for $X$ as defined over $\overline{b}$. This yields that $\mu'$ is well defined. Positivity, algebraicity, additivity and Fubini are easy to prove. Hence, $(D, \mu')$ yields an $MS$-dimension-measure for $\mathcal{M}$.
\end{proof}

An important consequence of this theorem is that for $MS$-measurable $\omega$-categorical structures we may take the dimension to be $SU$-rank. In a structure of finite $SU$-rank, $SU$-rank is a dimension function: it is additive by the Lascar inequalities \cite[Prop. 2.5.19]{Kimsimp}. In an $\omega$-categorical structure, it is definable being invariant. Since $MS$-measurable structures have finite $SU$-rank, in $\omega$-categorical $MS$-measurable structures, $SU$-rank yields a definable dimension. By definition it induces non-forking independence as its notion of independence. Hence, it satisfies all of the conditions of Theorem \ref{changedim}, and we get the following corollary.

\begin{corollary}\label{SUok} Suppose that the $\omega$-categorical structure $\mathcal{M}$ is $MS$-measurable with dimension-measure $h=(d, \mu)$. Then, $\mathcal{M}$ is also $MS$ measurable via the dimension-measure $h'=(SU, \mu')$, where $\mu'$ is as in Theorem \ref{changedim}. Hence, if an $\omega$-categorical structure $\mathcal{M}$ is $MS$-measurable, there is a dimension-measure on $\mathcal{M}$ where the dimension is given by $SU$-rank.
\end{corollary}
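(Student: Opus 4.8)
The plan is to apply Theorem \ref{changedim} with the specific choice $D = SU$-rank, so the real content is verifying that $SU$-rank satisfies the three hypotheses of that theorem: that it is a definable dimension function on $\mathrm{Def}(M)$, and that the notion of independence it induces is non-forking independence. First I would recall that every $MS$-measurable structure is supersimple of finite $SU$-rank \cite{EM}, so $SU$-rank is finite-valued and $\mathbb{N}$-valued on all types, hence well-defined on $\mathrm{Def}(M)$ via the convention $SU(X) = \max\{SU(\overline{a}) : \overline{a}\in X\}$ (a finite max, again by finiteness of $SU$ and $\omega$-categoricity). Next I would check the dimension axioms: algebraicity is immediate since algebraic (finite) types have $SU$-rank $0$; the union axiom is immediate from the definition of $SU(X)$ as a maximum; and additivity, i.e. $SU(\overline{a}\overline{b}/\overline{c}) = SU(\overline{a}/\overline{b}\overline{c}) + SU(\overline{b}/\overline{c})$, holds because in a structure of \emph{finite} $SU$-rank the Lascar inequalities \cite[Prop. 2.5.19]{Kimsimp} collapse to equalities (the usual subadditivity/superadditivity squeeze works precisely when ranks are finite and ordinal arithmetic is ordinary arithmetic).

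Then I would observe that $SU$-rank, being an invariant of types, is invariant in the sense of the earlier definition, and hence by Ryll--Nardzewski (invariance implies definability in the $\omega$-categorical setting, as noted after the definition of invariance) it is a \emph{definable} dimension. Finally, for the independence hypothesis of Theorem \ref{changedim}: by definition $SU$-independence is exactly the relation $\overline{a}\indep{C}{SU}B \iff SU(\overline{a}/BC) = SU(\overline{a}/C)$, and it is a standard fact of simplicity theory that in a supersimple theory this is precisely non-forking independence (this is the defining property of $SU$-rank measuring forking: $\overline{a}\indep{C}{f}B$ iff $SU(\overline{a}/BC)=SU(\overline{a}/C)$). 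With all three hypotheses verified, Theorem \ref{changedim} directly produces the dimension-measure $h' = (SU, \mu')$ with $\mu'$ agreeing with $\mu$ on complete types over finite parameter sets, which is exactly the claim.

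I do not anticipate a genuine obstacle here: the corollary is essentially a plug-in of the preceding theorem, and the only point requiring a moment's care is the additivity of $SU$-rank, where one must invoke finiteness of the rank so that the Lascar inequalities become exact equations rather than mere inequalities between ordinals. Everything else is bookkeeping with Ryll--Nardzewski and the standard characterization of forking via $SU$-rank.
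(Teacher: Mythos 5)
Your proof is correct and follows essentially the same route as the paper: verify that $SU$-rank is a finite, definable (via invariance and Ryll--Nardzewski) dimension function whose additivity comes from the Lascar inequalities in the finite-rank case, note that it induces non-forking independence by definition, and then apply Theorem \ref{changedim} with $D = SU$. The paper's argument is exactly this plug-in, stated more tersely; your extra care about the union and algebraicity axioms and the convention for extending $SU$-rank to definable sets is fine but not a point of divergence.
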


This statement is known to be false outside of an $\omega$-categorical context. In particular, in \cite[Remark 3.8]{EM}, Elwes and Macpherson show that there are $MS$-measurable structures where $SU$-rank is not definable. They also give an example of an $\omega$-categorical structure for which we can artificially choose the dimension in the $MS$-measure and $SU$-rank to differ. However, our theorem does prove that if there is an $MS$-measure in the $\omega$-categorical case, there is no harm in taking the dimension to be $SU$-rank.

Corollary \ref{SUok} is a powerful tool in the context of $MS$-measurable $\omega$-categorical structures. For example, Evans' proof \cite{Measam} employs a highly non-trivial theorem showing that in the class of $\omega$-categorical structures he is considering, any dimension function corresponds to a scaled $SU$-rank. Our result allows us to skip proving such a theorem. Indeed, for other $\omega$-categorical structures such a theorem is false (e.g. the example in \cite[Rem. 3.8]{EM}).

\section{Independence in measure}\label{indepmeas}
 In this section, we obtain another set of equations that hold for $MS$-measurable $\omega$-categorical structures. The main idea is that in an $MS$-measurable structure, non-forking independence induces probabilistic independence in the measure. This is shown explicitly in Theorem \ref{indeptheorem} and Corollary \ref{triang}, which yields equations that we will use later in our proof of non-$MS$-measurability. We thank Ehud Hrushovski for sharing his notes for \cite{AER}. In there, he proves a more general version of Theorem \ref{indeptheorem}, which implies the results in this section. Working in an $\omega$-categorical $MS$-measurable context greatly simplifies the tools and the amount of theory required to obtain the probabilistic independence theorem. Hence, we give proofs for these results in this section.\\
 
We work in an $\omega$-saturated $\mathcal{L}$-structure $\mathcal{M}$. Let $\mu:\mathrm{Def}_{\overline{y}}(M)\to [0,1]$ be an invariant measure and  $\phi(\overline{x}_1, \overline{y}), \psi(\overline{x}_2, \overline{y})$ be $\mathcal{L}$-formulas. We define the relation $R_\alpha(\overline{x}_1, \overline{x}_2)$ on $M^{\vert\overline{x}_1\vert}\times M^{\vert\overline{x}_2\vert}$ by 
\[R_\alpha(\overline{a}, \overline{b}) \text{ if and only if } \mu(\phi(\overline{a}, \overline{y})\wedge \psi(\overline{b}, \overline{y}))=\alpha.\]
From \cite[Prop.2.25]{approxsubg} we know that $R_\alpha(\overline{x}_1, \overline{x}_2)$ is stable.\\

We shall begin by showing that whether $R_\alpha(\overline{a}, \overline{b})$ holds of a pair of tuples only depends on the individual types of $\overline{a}$ and $\overline{b}$. This can be seen as a consequence of a commonly used corollary to the finite equivalence relation theorem \cite[Lemma 2.11]{Pillay} (see \cite{psfH} or \cite{KPsimp} for similar uses). We need to introduce some notation to express the result. 

\begin{definition} Let $\delta(\overline{x};\overline{z})$ be an $\mathcal{L}$-formula and $A$ a set of parameters. An instance of $\delta$ over $A$ is a formula $\delta(\overline{x};\overline{a})$ in $\mathcal{L}_A$, i.e. the language obtained by adding to $\mathcal{L}$ constants for the elements of $A$. A complete $\delta$-type over $A$ is a maximally consistent set of instances of $\delta$ over $A$. By $S_\delta^{\overline{x}} (A)$ we mean the set of $\delta$-types over $A$. By $FER_\delta(A)$ we denote the set of $A$-definable equivalence relations $E(\overline{x}, \overline{y})$ with finitely many classes such that for any $\overline{b}$, $E(\overline{x}, \overline{b})$ is elementary equivalent to a Boolean combination of instances of $\delta$ over $A$.
\end{definition}

With the above notation, we can prove the following well-know application of the finite equivalence relation theorem:

\begin{prop} Let $\mathcal{M}$ be an $\omega$-saturated $\mathcal{L}$-structure with $\mathrm{acl}^{eq}(\emptyset)=\mathrm{dcl}^{eq}(\emptyset)$. Let $\delta(\overline{x},\overline{y})$ be a stable $\mathcal{L}$-formula. Suppose that $\overline{a}$ and $\overline{b}$ are tuples from $\mathcal{M}$ with $\overline{a}\indep{}{}\overline{b}$. Then, whether $\vDash \delta(\overline{a},\overline{b})$ only depends on $\mathrm{tp}(\overline{a})$ and $\mathrm{tp}(\overline{b})$, but not on $\mathrm{tp}(\overline{ab})$.
\end{prop}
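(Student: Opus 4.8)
The plan is to invoke the finite equivalence relation theorem in the form of the cited Lemma 2.11 of \cite{Pillay}, applied to the stable formula $\delta(\overline{x},\overline{y})$. Fix tuples $\overline{a}$ and $\overline{b}$ with $\overline{a}\indep{}{}\overline{b}$, and work over the empty set (using the hypothesis $\mathrm{acl}^{eq}(\emptyset)=\mathrm{dcl}^{eq}(\emptyset)$). First I would set up the relevant $\delta$-type: let $q(\overline{y})=\mathrm{tp}_\delta(\overline{b}/\overline{a})$ be the complete $\delta$-type of $\overline{b}$ over $\overline{a}$. By stability of $\delta$ and the fact that $\overline{b}\indep{}{}\overline{a}$ (non-forking independence is symmetric in simple theories, and here stable), $q$ is the unique nonforking extension to $\overline{a}$ of its restriction $q_0(\overline{y})=\mathrm{tp}_\delta(\overline{b})$. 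The standard application of the finite equivalence relation theorem then tells us that $q$ is definable over $\mathrm{acl}^{eq}(\emptyset)=\mathrm{dcl}^{eq}(\emptyset)$, i.e. over $\emptyset$: concretely, there is an $\mathcal{L}$-formula (a $\delta$-definition) $d_q\delta(\overline{x})$ over $\emptyset$ such that for all $\overline{a}'$, we have $\delta(\overline{x},\overline{a}')\in q \iff \vDash d_q\delta(\overline{a}')$.

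With this in hand the conclusion follows quickly. Since $\delta(\overline{x},\overline{a})\in q = \mathrm{tp}_\delta(\overline{b}/\overline{a})$ exactly when $\vDash\delta(\overline{b},\overline{a})$, we get $\vDash\delta(\overline{b},\overline{a}) \iff \vDash d_q\delta(\overline{a})$, and the right-hand side depends only on $\mathrm{tp}(\overline{a})$. But the $\delta$-definition $d_q\delta$ depends only on $q_0 = \mathrm{tp}_\delta(\overline{b})$, which depends only on $\mathrm{tp}(\overline{b})$. Hence whether $\vDash\delta(\overline{a},\overline{b})$ is determined by the pair $(\mathrm{tp}(\overline{a}),\mathrm{tp}(\overline{b}))$ and is independent of $\mathrm{tp}(\overline{a}\overline{b})$; more precisely, if $\overline{a}\equiv\overline{a}'$, $\overline{b}\equiv\overline{b}'$, $\overline{a}\indep{}{}\overline{b}$ and $\overline{a}'\indep{}{}\overline{b}'$, then $\vDash\delta(\overline{a},\overline{b})\iff\vDash\delta(\overline{a}',\overline{b}')$. (One should be mildly careful about the asymmetry between $\overline{x}$ and $\overline{y}$ in $\delta$: run the argument with the roles as above, definability in the variable $\overline{x}$ over the parameter side $\overline{a}$.)

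The main obstacle is purely bookkeeping: making sure the finite equivalence relation / definability-of-types machinery is applied to the correct variable and that the relevant $\delta$-type really is over $\emptyset$ after using $\mathrm{acl}^{eq}(\emptyset)=\mathrm{dcl}^{eq}(\emptyset)$. The genuine content — that a nonforking extension of a $\delta$-type is its unique coheir/definable-over-base extension for stable $\delta$ — is exactly \cite[Lemma 2.11]{Pillay} together with elimination of imaginaries over $\emptyset$, so no new argument is needed; one just has to cite it in the form that gives a $\emptyset$-definition of the type. I would present the proof in three short steps: (1) reduce to finding a $\emptyset$-definable $\delta$-definition of $\mathrm{tp}_\delta(\overline{b}/\overline{a})$ using independence and stability; (2) quote Lemma 2.11 plus $\mathrm{acl}^{eq}(\emptyset)=\mathrm{dcl}^{eq}(\emptyset)$ to get it; (3) unwind the definition of the $\delta$-definition to read off that $\vDash\delta(\overline{a},\overline{b})$ depends only on $\mathrm{tp}(\overline{a})$ and $\mathrm{tp}(\overline{b})$.
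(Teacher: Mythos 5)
Your proposal is correct and rests on the same key ingredients as the paper's proof: the finite equivalence relation theorem for the stable formula $\delta$ together with $\mathrm{acl}^{eq}(\emptyset)=\mathrm{dcl}^{eq}(\emptyset)$ forcing the relevant finite $\delta$-equivalence relations to be trivial, so that the $\delta$-type of one tuple over the other is the unique non-forking extension of its restriction to $\emptyset$. The only (cosmetic) difference is that you package this uniqueness as a $\emptyset$-definable $\delta$-definition and read off both dependencies at once, whereas the paper compares two non-forking extensions directly and then repeats the argument in the second variable; the variable-bookkeeping caveat you flag is real but harmless.
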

\begin{proof} Let $\overline{a}$ and $\overline{a}'$ be such that $\overline{a}\indep{}{}\overline{b},  \overline{a}'\indep{}{}\overline{b}$ and $\overline{a}\equiv\overline{a}'$. In particular, $\overline{a}$ and $\overline{a}'$ have the same $\delta$-type over $\emptyset$.\\

Let $B\subset M$ and suppose $q_1,q_2\in S^{\overline{x}}_\delta(B)$ are non-forking extensions of  $\mathrm{tp}_{\delta}(\overline{a})$, the $\sigma$ type of $\overline{a}$ over the empty set. By stability of $\delta(\overline{x}, \overline{y})$ and the finite equivalence relation theorem \cite[Lemma 2.11]{Pillay}, $q_1\neq q_2$ if and only if there is a finite equivalence relation $E(\overline{x}, \overline{y})\in\mathrm{FER}_\delta(\emptyset)$ such that $q_1(\overline{x})\cup q_2(\overline{y})\vdash E(\overline{x}, \overline{y})$. By $\mathrm{acl}^{eq}(\emptyset)=\mathrm{dcl}^{eq}(\emptyset)$,
$E(\overline{x}, \overline{y})$ is trivial, i.e. tuples with the same types  over the emptyset will be in the same equivalence classes. In particular, since for $\overline{a}$, $E(\overline{x}, \overline{a})$ is a Boolean combination instances of $\delta$ over over $\emptyset$, the equivalence class of $\overline{a}$ is entirely determined by $\mathrm{tp}_{\delta}(\overline{a})$, and so $q_1=q_2$. Hence, for any $B$ there is a unique $\delta$-type over $B$ which is a non-forking extension of $\mathrm{tp}_{\delta}(\overline{a})$. In particular, $\mathrm{tp}_{\delta}(\overline{a}/\overline{b})=\mathrm{tp}_{\delta}(\overline{a}'/\overline{b})$, which yields
\[\vDash \delta(\overline{a}, \overline{b})\leftrightarrow \delta(\overline{a}', \overline{b}).\]
We can apply the same reasoning in the variable $\overline{y}$ to prove that for $\overline{a}\indep{}{}\overline{b}$ and $\overline{a}'\indep{}{}\overline{b}'$ with $\overline{a}\equiv\overline{a}'$ and $\overline{b}\equiv\overline{b}'$ we have
\[\vDash \delta(\overline{a}, \overline{b})\leftrightarrow \delta(\overline{a}', \overline{b}').\]
\end{proof}

From this and stability of $R_\alpha(\overline{x}, \overline{y})$ we immediately get the following result for an invariant Keisler measure:

\begin{corollary}\label{constant} Let $\mu:\mathrm{Def}_{\overline{y}}(M)\to [0,1]$ be an invariant measure on an $\omega$-saturated $\mathcal{L}$-structure $\mathcal{M}$ with $\mathrm{acl}^{eq}(\emptyset)=\mathrm{dcl}^{eq}(\emptyset)$. Let $\phi_i(\overline{x}_i, \overline{y})$ for $i\in\{1,2\}$ be $\mathcal{L}$-formulas. Suppose that $\overline{a}_1, \overline{a}_2$ are tuples from $M$ such that $\overline{a}_1\indep{}{}\overline{a}_2$. Then, the value of $\mu(\phi_1(\overline{a}_1, \overline{y})\wedge \phi_2(\overline{a}_2, \overline{y}))$ only depends on $\mathrm{tp}(\overline{a}_1)$ and $\mathrm{tp}(\overline{a}_2)$, but not on $\mathrm{tp}(\overline{a}_1\overline{a}_2)$.

\end{corollary}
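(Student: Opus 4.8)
The plan is to deduce Corollary~\ref{constant} directly from the preceding proposition applied to the relation $R_\alpha$. The key observation is that $R_\alpha(\overline{x}_1,\overline{x}_2)$, defined by $\mu(\phi_1(\overline{x}_1,\overline{y})\wedge\phi_2(\overline{x}_2,\overline{y}))=\alpha$, is a stable $\mathcal{L}$-formula by \cite[Prop.~2.25]{approxsubg}. Hence the proposition applies with $\delta=R_\alpha$: for $\overline{a}_1\indep{}{}\overline{a}_2$ and $\overline{a}'_1\indep{}{}\overline{a}'_2$ with $\overline{a}_1\equiv\overline{a}'_1$ and $\overline{a}_2\equiv\overline{a}'_2$, we get
\[
\vDash R_\alpha(\overline{a}_1,\overline{a}_2)\leftrightarrow R_\alpha(\overline{a}'_1,\overline{a}'_2),
\]
i.e. $\mu(\phi_1(\overline{a}_1,\overline{y})\wedge\phi_2(\overline{a}_2,\overline{y}))=\alpha$ iff $\mu(\phi_1(\overline{a}'_1,\overline{y})\wedge\phi_2(\overline{a}'_2,\overline{y}))=\alpha$.

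From here I would argue as follows. Fix tuples $\overline{a}_1,\overline{a}_2$ with $\overline{a}_1\indep{}{}\overline{a}_2$, and let $\alpha:=\mu(\phi_1(\overline{a}_1,\overline{y})\wedge\phi_2(\overline{a}_2,\overline{y}))$. Suppose $\overline{a}'_1,\overline{a}'_2$ is any other pair with $\overline{a}'_1\equiv\overline{a}_1$, $\overline{a}'_2\equiv\overline{a}_2$, and $\overline{a}'_1\indep{}{}\overline{a}'_2$; such a pair exists in the $\omega$-saturated model (take a non-forking extension / realise an independent copy), but in fact we only need the implication for the pairs actually given. Applying the biconditional above with this $\alpha$, since $\vDash R_\alpha(\overline{a}_1,\overline{a}_2)$ we conclude $\vDash R_\alpha(\overline{a}'_1,\overline{a}'_2)$, i.e. $\mu(\phi_1(\overline{a}'_1,\overline{y})\wedge\phi_2(\overline{a}'_2,\overline{y}))=\alpha$. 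Since the value $\alpha$ was computed only from $\mathrm{tp}(\overline{a}_1)$ and $\mathrm{tp}(\overline{a}_2)$ (via the fixed choice of representatives), this shows the measure depends only on the two individual types.

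There is essentially no obstacle here: the corollary is an immediate specialisation of the proposition to the particular stable formula $R_\alpha$, combined with the citation that $R_\alpha$ is stable. The only mild subtlety is the logical bookkeeping --- the proposition is phrased for a single fixed stable formula $\delta$, whereas we have a family $(R_\alpha)_{\alpha\in[0,1]}$ indexed by the possible measure values; one applies the proposition separately for each value of $\alpha$ that arises, which is harmless since for any given pair of tuples the measure $\mu(\phi_1(\overline{a}_1,\overline{y})\wedge\phi_2(\overline{a}_2,\overline{y}))$ takes exactly one value $\alpha$, and $\vDash R_\alpha(\overline{a}_1,\overline{a}_2)$ for that $\alpha$. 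I would therefore write the proof as a two-line deduction: invoke stability of $R_\alpha$, invoke the proposition, done.
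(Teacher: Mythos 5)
Your proof is correct and is essentially the paper's own argument: the corollary is stated there as an immediate consequence of the preceding proposition applied to the stable relation $R_\alpha$ (stability coming from the same citation you use). Your extra remark about applying the proposition separately for each value of $\alpha$ is exactly the right bookkeeping and matches what the paper leaves implicit.
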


Let $\mathcal{M}$ be an $\mathcal{L}$ structure. Let $\overline{a}$ be a tuple from $M$ and $B\subseteq M$.  We write $\mathrm{loc}(\overline{a}/B)$ for the set of realisations of $\mathrm{tp}(\overline{a}/B)$ in $\mathcal{M}$.

As noted earlier, by Proposition 5.10 of \cite{MS}, if $\mathcal{M}$ is $MS$-measurable, then so is $\mathcal{M}^{eq}$. Let $\chi(x)$ be an $\mathcal{L}^{eq}(M^{eq})$-formula and let $\mathcal{M}^\star$ and $\mathcal{M}^\bullet$ be restrictions of $\mathcal{M}^{eq}$ to finitely many sorts containing the sorts of the variables and parameters in $\chi(x)$. Inspecting the proof of Proposition 5.10 we can see that the dimension measures $h^\star$ on $\mathcal{M}^\star$ and $h^\bullet$ on $\mathcal{M}^\bullet$ induced by the dimension-measure $h$ on $\mathcal{M}$ will agree on the value of the set defined by $\chi(x)$. Hence, we may write unambiguously
\[h(\chi(\mathcal{M}^{eq}))\]
for this value.

\begin{lemma}\label{a1a2form} Let $\mathcal{M}$ be $\omega$-categorical and $a_1, a_2$ be tuples from $M^{eq}$. Then, there is an $\mathcal{L}^{eq}$ formula $\psi(x_1, x_2)$ such that 

\begin{equation}\label{a1a2indep}
    \vDash \psi(a_1', a_2') \text{ if and only if } a_1'\equiv a_1 \text{ and } a_2'\equiv a_2 \text{ and } a_1'\indep{}{}a_2'.
\end{equation}
Moreover, if $\mathcal{M}$ is $MS$-measurable,
\[\mu\left(\psi(M^{eq}, M^{eq})\right)=\mu(a_1)\mu(a_2).\]
\end{lemma}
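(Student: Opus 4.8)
The plan is to build $\psi(x_1,x_2)$ by combining three ingredients: the isolating formulas for $\mathrm{tp}(a_1)$ and $\mathrm{tp}(a_2)$, and a formula expressing dimension-independence. By $\omega$-categoricity (Ryll-Nardzewski), since $a_1,a_2\in M^{eq}$ and $MS$-measurability passes to $\mathcal{M}^{eq}$ (Remark \ref{eqMS}), we may work inside a suitable finite-sort reduct of $\mathcal{M}^{eq}$, which is again $\omega$-categorical and $MS$-measurable. Let $\theta_i(x_i)$ isolate $\mathrm{tp}(a_i)$ for $i\in\{1,2\}$. By Corollary \ref{SUok} we may take the dimension in the $MS$-measure to be $SU$-rank, and by Theorem \ref{dimtheom} dimension-independence coincides with non-forking independence; since $SU$-rank is invariant, hence definable by Ryll-Nardzewski, the condition ``$SU(x_1/x_2)=SU(x_1)$'' is expressible by an $\mathcal{L}^{eq}$-formula on the set where $\theta_1(x_1)\wedge\theta_2(x_2)$ holds. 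Conjoining, we obtain $\psi(x_1,x_2)$ satisfying (\ref{a1a2indep}): indeed $\vDash\psi(a_1',a_2')$ iff $a_1'\equiv a_1$, $a_2'\equiv a_2$, and $a_1'\indep{}{}a_2'$.

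For the measure computation, the idea is to apply Fubini (Definition \ref{MScond}(4)) to the projection $\pi\colon (x_1,x_2)\mapsto x_1$. First I would check that $\pi\bigl(\psi(M^{eq},M^{eq})\bigr)=\theta_1(M^{eq})=\mathrm{loc}(a_1)$: every $a_1'\equiv a_1$ has \emph{some} non-forking ``partner'' $a_2'\equiv a_2$ (take a non-forking extension of $\mathrm{tp}(a_2)$ to $a_1'$ and realize it — possible by $\omega$-saturation), so the projection is exactly $\mathrm{loc}(a_1)$. Next, the key point is that the fibre over any $a_1'\in\mathrm{loc}(a_1)$ is
\[
\pi^{-1}(a_1')\cap\psi(M^{eq},M^{eq})=\{a_2'\in M^{eq}\mid a_2'\equiv a_2,\ a_2'\indep{}{}a_1'\}=\mathrm{loc}(a_2/a_1'),
\]
the last equality because the non-forking extensions of $\mathrm{tp}(a_2)$ to $a_1'$ are exactly the \emph{realizations} of the restriction of $\mathrm{tp}(a_2/a_1')$-type... more precisely, the set of $a_2'$ realizing $\mathrm{tp}(a_2)$ and independent from $a_1'$ is the union of the (finitely many, by $\omega$-categoricity) non-forking complete extensions of $\mathrm{tp}(a_2)$ over $a_1'$, and by invariance this union has the same dimension-measure $h\bigl(\mathrm{loc}(a_2/a_1')\bigr)$ for every $a_1'\in\mathrm{loc}(a_1)$. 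Call this common value $(k,\nu)$. Then Fubini gives $\mu\bigl(\psi(M^{eq},M^{eq})\bigr)=\mu(\mathrm{loc}(a_1))\cdot\nu=\mu(a_1)\,\mu(a_2/a_1^{*})$ for a fixed realization, where $\mu(a_2/a_1')$ is independent of the choice of $a_1'\in\mathrm{loc}(a_1)$.

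It remains to identify $\nu=\mu(a_2/a_1')$ with $\mu(a_2)$. This is where I would invoke the lemma feeding into this section: picking $a_1'$ realizing $\mathrm{tp}(a_1)$ with $a_2\indep{}{}a_1'$ (possible by extension and $\omega$-saturation), we have $\mathrm{tp}(a_2/a_1')$ a non-forking extension of $\mathrm{tp}(a_2)$, so $SU(a_2/a_1')=SU(a_2)$ and hence $d(a_2/a_1')=d(a_2)$. For the measure, the partitioning equation from the Lemma preceding Corollary \ref{amacor} (applied with the empty set as base, $B=\{a_1'\}$, and $p=\mathrm{tp}(a_2)$) expresses $\mu(a_2)$ as the sum of the measures of the non-forking extensions of $\mathrm{tp}(a_2)$ to $a_1'$; but by Corollary \ref{constant} — or rather its underlying fact that the non-forking extension of a stable-type datum is unique under $\mathrm{acl}^{eq}(\emptyset)=\mathrm{dcl}^{eq}(\emptyset)$, which holds after naming a finite algebraically closed set, a harmless move since $MS$-measurability is preserved — I would argue the relevant fibre decomposition collapses appropriately; more directly, one sums $h\bigl(\mathrm{loc}(a_2/a_1')\bigr)$ over all $a_1'$-extensions and matches it to $h(a_2)$, forcing $\nu=\mu(a_2)$ exactly as in the proof of Corollary \ref{amacor}. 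The main obstacle is precisely this last bookkeeping: ensuring that ``$\mu(a_2)$ equals the measure of a single non-forking extension'' rather than a sum, which requires care about whether $\mathrm{tp}(a_2/a_1')$ for a \emph{generic} independent $a_1'$ is the unique non-forking extension — this is delicate in general but is exactly the content guaranteed by the uniqueness discussion in the proof of the Proposition before Corollary \ref{constant}, so I would lean on that. Putting the pieces together yields $\mu\bigl(\psi(M^{eq},M^{eq})\bigr)=\mu(a_1)\mu(a_2)$.
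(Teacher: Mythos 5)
Your argument is essentially the paper's: both define $\psi$ via Ryll--Nardzewski together with invariance of non-forking independence, and both compute $\mu\left(\psi(M^{eq},M^{eq})\right)$ by Fubini, identifying each fibre as the union of the non-forking extensions of one type over a realization of the other and using additivity to see that this union has the same measure as the full locus. (The paper projects onto the second coordinate and reads off the fibre measure $\mu(a_1)$ directly from additivity, the forking extensions having strictly smaller dimension by Theorem \ref{dimtheom}; your projection onto the first coordinate together with the partition lemma preceding Corollary \ref{amacor} is the same computation.)

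The final paragraph, however, needs repair. The fibre over $a_1'$ is not $\mathrm{loc}(a_2/a_1')$ for a single complete type: it is the disjoint union of the loci of \emph{all} the non-forking extensions $p_1,\dots,p_n$ of $\mathrm{tp}(a_2)$ to $a_1'$, all of the same maximal dimension, so by additivity its measure is $\sum_i\mu(p_i)$, which the displayed Lemma identifies with $\mu(a_2)$. That finishes the computation; there is no residual ``bookkeeping obstacle.'' In particular you do not need, and in general do not have, uniqueness of the non-forking extension (over a vertex of the random graph the type of a vertex already has two non-forking extensions), and the Proposition before Corollary \ref{constant} cannot supply it: it gives uniqueness only of non-forking $\delta$-types for a single stable formula $\delta$, and only under the extra hypothesis $\mathrm{acl}^{eq}(\emptyset)=\mathrm{dcl}^{eq}(\emptyset)$, which is not assumed in this lemma. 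Delete the ``delicate'' discussion and keep the sentence in which you sum over the non-forking extensions and match the total to $h(a_2)$; with that, your proof is correct and coincides with the paper's.
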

\begin{proof} The first part follows simply by Ryll-Nardzewski and invariance of non-forking independence. For the second part, we use Fubini. Firstly, since $\psi(x_1, x_2)$ isolates the types of its variables,
\[\vDash \exists x_1 \psi(x_1, b_2) \text{ if and only if } b_2\in\mathrm{loc}(a_2).\]
So, $\mu(\exists x_1 \psi(x_1, M^{eq}))=\mu(a_2)$. Secondly, consider $\mu(\psi(M^{eq}, b_2))$ for $b_2$ such that $\vDash \exists x_1 \psi(x_1, b_2)$. By invariance, $\mu(\psi(M^{eq}, b_2))=\mu(\psi(M^{eq}, a_2))$. But now, for $b_1\equiv a_1$, by the dimension theorem \ref{dimtheom},
\[\vDash \psi(b_1, a_2) \text{ if and only if } \mathrm{d}(b_1/a_2)=\mathrm{d}(b_1).\]
Hence, by additivity, $\mu(a_1)=\mu(\psi(M^{eq}, a_2))$. Considering Fubini applied to $\psi(x_1, x_2)$ yields the result. 
\end{proof}

\begin{fact}\label{indfact} If $\mathcal{M}$ is supersimple, $\omega$-categorical with $\mathrm{acl}^{eq}(\emptyset)=\mathrm{dcl}^{eq}(\emptyset)$, the independence theorem holds over the empty set.
\end{fact}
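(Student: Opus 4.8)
The plan is to deduce the independence theorem over $\emptyset$ from the independence theorem over an arbitrary algebraically closed set, which holds in every simple theory by the Kim--Pillay theorem \cite{KPsimp}, together with the observation that under the hypothesis $\mathrm{acl}^{eq}(\emptyset)=\mathrm{dcl}^{eq}(\emptyset)$ working over $\emptyset$ is interchangeable with working over $A_0:=\mathrm{acl}^{eq}(\emptyset)$. I work in the monster model $\mathbb{M}^{eq}$; note that $A_0$ is algebraically closed and, by $\omega$-categoricity, finite in each sort.

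First I would record two translation principles. (i) For all tuples $\overline{c},\overline{c}'$ from $\mathbb{M}^{eq}$, $\overline{c}\equiv_\emptyset\overline{c}'$ if and only if $\overline{c}\equiv_{A_0}\overline{c}'$: the nontrivial direction is immediate from the hypothesis, since every element of $A_0=\mathrm{dcl}^{eq}(\emptyset)$ is fixed pointwise by every automorphism of $\mathbb{M}^{eq}$, so an automorphism witnessing $\overline{c}\equiv_\emptyset\overline{c}'$ already fixes $A_0$ pointwise. (ii) For all tuples and all sets $B$, $\overline{c}\indep{\emptyset}{}B$ if and only if $\overline{c}\indep{A_0}{}B$; this is the standard fact in simple theories that a type forks over $\emptyset$ iff it forks over $\mathrm{acl}^{eq}(\emptyset)$ \cite[Ch.~2]{Kimsimp}, and does not use the extra hypothesis.

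Then I would assemble the argument. Suppose we are given the hypotheses of the independence theorem over $\emptyset$: tuples $\overline{a}_1,\overline{a}_2$ with $\overline{a}_1\indep{\emptyset}{}\overline{a}_2$, and $\overline{b}_1\equiv\overline{b}_2$ with $\overline{b}_1\indep{\emptyset}{}\overline{a}_1$ and $\overline{b}_2\indep{\emptyset}{}\overline{a}_2$. By (i) and (ii) these translate to exactly the hypotheses of the independence theorem over the algebraically closed set $A_0$: $\overline{a}_1\indep{A_0}{}\overline{a}_2$, $\overline{b}_1\equiv_{A_0}\overline{b}_2$, $\overline{b}_1\indep{A_0}{}\overline{a}_1$, $\overline{b}_2\indep{A_0}{}\overline{a}_2$. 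Applying that theorem yields $\overline{b}$ with $\overline{b}\equiv_{A_0\overline{a}_1}\overline{b}_1$, $\overline{b}\equiv_{A_0\overline{a}_2}\overline{b}_2$ and $\overline{b}\indep{A_0}{}\overline{a}_1\overline{a}_2$. Since $A_0\subseteq\mathrm{dcl}^{eq}(\emptyset)$, the first two conclusions give $\overline{b}\equiv_{\overline{a}_1}\overline{b}_1$ and $\overline{b}\equiv_{\overline{a}_2}\overline{b}_2$, and by (ii) the last gives $\overline{b}\indep{\emptyset}{}\overline{a}_1\overline{a}_2$; this is precisely the conclusion of the independence theorem over $\emptyset$. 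The equivalent amalgamation-of-types formulation then follows by the usual reformulation.

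The argument has no real obstacle: the substantive content, the independence theorem over an algebraically closed set, is quoted, and everything else is bookkeeping. The one point to get right is transporting each of the four hypotheses and the three conclusions across the change of base $\emptyset\leftrightarrow A_0$ using the correct direction of (i) and (ii); in particular it is the hypothesis $\overline{b}_1\equiv\overline{b}_2$ that the extra assumption $\mathrm{acl}^{eq}(\emptyset)=\mathrm{dcl}^{eq}(\emptyset)$ upgrades to $\overline{b}_1\equiv_{A_0}\overline{b}_2$, which is what is actually needed over $A_0$. Supersimplicity is used only through simplicity.
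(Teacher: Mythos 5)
Your argument is essentially the paper's: both reduce the independence theorem over $\emptyset$ to the independence theorem over $A_0=\mathrm{acl}^{eq}(\emptyset)$ and use the hypothesis $\mathrm{acl}^{eq}(\emptyset)=\mathrm{dcl}^{eq}(\emptyset)$ to transport the hypotheses and conclusions between the two bases; you simply carry out explicitly the bookkeeping that the paper leaves terse. One point to tighten: the independence theorem over an arbitrary algebraically closed set, with the hypothesis that $\overline{b}_1$ and $\overline{b}_2$ merely have the same type over that set, is not literally the Kim--Pillay theorem for arbitrary simple theories --- the standard statement requires $\overline{b}_1$ and $\overline{b}_2$ to have the same \emph{Lascar strong type} over the base, and the identification of Lascar strong types with types over $\mathrm{acl}^{eq}$ is a genuine extra ingredient. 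The paper supplies it from $\omega$-categoricity (over finite sets, Lascar types and strong types coincide); it also follows from supersimplicity. So your closing remark that supersimplicity is used only through simplicity is not quite accurate: either $\omega$-categoricity or supersimplicity is doing real work in upgrading $\overline{b}_1\equiv_{A_0}\overline{b}_2$ to the Lascar-strong-type hypothesis of the quoted theorem.
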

\begin{proof}
By $\omega$-categoricity, over finite sets, Lascar types and strong types coincide. By \cite[Proposition 3.2.12]{Kimsimp}, the independence theorem holds over $\mathrm{acl}^{eq}(\emptyset)$. Since $\mathrm{acl}^{eq}(\emptyset)=\mathrm{dcl}^{eq}(\emptyset)$, the independence theorem holds over the empty set. 
\end{proof}

At this point, we have the tools to show how in an $\omega$-categorical context, for a triplet of independent pairs, non-forking independence corresponds to probabilistic independence. 
 We state the following result in terms of $\mathcal{M}^{eq}$.

\begin{theorem}\label{indeptheorem} Let $\mathcal{M}^{eq}$ be $MS$-measurable, $\omega$-categorical with $\mathrm{acl}^{eq}(\emptyset)=\mathrm{dcl}^{eq}(\emptyset)$. 
Suppose that $a_1, {a}_2, {a}_3$ are tuples such that ${a}_i\indep{}{}{a}_j$ for $1\leq i<j\leq 3$. Then, $\mathrm{d}(\mathrm{tp}({a}_3/{a}_1)\cup \mathrm{tp}({a}_3/{a}_2))=\mathrm{d}(a_3)$ and
\begin{equation}\label{measeq0}
    \mu(\mathrm{tp}({a}_3/{a}_1)\cup \mathrm{tp}({a}_3/{a}_2))=\frac{\mu({a}_3/{a}_1)\mu({a}_3/{a}_2)}{\mu({a}_3)}. 
\end{equation}
\end{theorem}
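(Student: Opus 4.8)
The plan is to exhibit a definable set in $\mathcal{M}^{eq}$ (or rather in a finite reduct of it) whose $h$-value computes both sides of the desired equation, and then apply Fubini together with the independence theorem (Fact~\ref{indfact}) and Corollary~\ref{constant}. Fix formulas $\phi_i(x_i, y)$ isolating $\mathrm{tp}(a_3/a_i)$ over a tuple in the sort of $a_i$, for $i=1,2$, so that for $b_i\equiv a_i$ the set $\phi_i(M^{eq}, b_i)=\mathrm{loc}(a_3/b_i)$. I would then look at the formula
\[
\Theta(x_1,x_2,y) \;:=\; \psi(x_1,x_2)\wedge\phi_1(x_1,y)\wedge\phi_2(x_2,y),
\]
where $\psi(x_1,x_2)$ is the formula from Lemma~\ref{a1a2form} expressing ``$x_1\equiv a_1$, $x_2\equiv a_2$, $x_1\indep{}{}x_2$''. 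The set $X:=\Theta(M^{eq},M^{eq},M^{eq})$ is the set of triples $(b_1,b_2,c)$ with $b_1\equiv a_1$, $b_2\equiv a_2$, $b_1\indep{}{}b_2$, and $c\models \mathrm{tp}(a_3/b_1)\cup\mathrm{tp}(a_3/b_2)$.

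First I would compute $h(X)$ via the projection $\pi$ onto the $(x_1,x_2)$-coordinates. The image $\pi(X)$ is $\psi(M^{eq},M^{eq})$, which by Lemma~\ref{a1a2form} has $h$-value $(\mathrm{d}(a_1a_2),\mu(a_1)\mu(a_2))$ — here I use $a_1\indep{}{}a_2$, so $\mathrm{d}(a_1a_2)=\mathrm{d}(a_1)+\mathrm{d}(a_2)$. For a fixed $(b_1,b_2)\in\pi(X)$, the fibre is $\mathrm{loc}(a_3/b_1)\cap\mathrm{loc}(a_3/b_2) = (\mathrm{tp}(a_3/b_1)\cup\mathrm{tp}(a_3/b_2))(M^{eq})$. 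By Fact~\ref{indfact} (the independence theorem over $\emptyset$, using $b_1\indep{}{}b_2$ and $\mathrm{tp}(a_3/b_i)\not\!\!\indep$-free over $\emptyset$, i.e.\ $a_3\indep{}{}b_i$), this partial type is consistent, and it is realised by some $c$ with $c\indep{}{}b_1b_2$. The key point is that the $h$-value of this fibre is \emph{constant} as $(b_1,b_2)$ ranges over $\pi(X)$: this is exactly Corollary~\ref{constant} (with $\mu$ the induced Keisler measure on the relevant sort, and the two formulas $\phi_1,\phi_2$), together with the analogous invariance statement for the dimension $\mathrm{d}$, which follows from invariance of $\mathrm{d}$ and the fact that $\mathrm{tp}(b_1b_2)$ is constant. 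Call this common fibre value $(k,\nu)$; by definition $k=\mathrm{d}(\mathrm{tp}(a_3/a_1)\cup\mathrm{tp}(a_3/a_2))$ and $\nu=\mu(\mathrm{tp}(a_3/a_1)\cup\mathrm{tp}(a_3/a_2))$. Fubini then gives
\[
\mathrm{d}(X)=\mathrm{d}(a_1)+\mathrm{d}(a_2)+k,\qquad \mu(X)=\mu(a_1)\mu(a_2)\,\nu.
\]

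Next I would compute $h(X)$ a second way, via the projection $\rho$ onto the $y$-coordinate. The image $\rho(X)$ is the set of $c$ such that there exist $b_1\equiv a_1$, $b_2\equiv a_2$ with $b_1\indep{}{}b_2$ and $c\models\mathrm{tp}(a_3/b_1)\cup\mathrm{tp}(a_3/b_2)$; since each $\mathrm{tp}(a_3/b_i)$ extends $\mathrm{tp}(a_3)$, we get $\rho(X)\subseteq\mathrm{loc}(a_3)$, and by applying the independence theorem starting from $c\equiv a_3$ one sees $\rho(X)=\mathrm{loc}(a_3)$, so $h(\rho(X))=(\mathrm{d}(a_3),\mu(a_3))$. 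For fixed $c\equiv a_3$, the fibre $\rho^{-1}(c)\cap X$ is the set of $(b_1,b_2)$ with $b_i\models\mathrm{tp}(a_i/a_3)^{\,c}$ (the copy of $\mathrm{tp}(a_i/a_3)$ with $a_3$ replaced by $c$) and $b_1\indep{}{}b_2$. Here I claim the fibre $h$-value is constant in $c$ and equals $(\mathrm{d}(a_1/a_3)+\mathrm{d}(a_2/a_3),\ \mu(a_1/a_3)\mu(a_2/a_3))$: constancy in $c$ is invariance of $h$; the value follows from Fubini applied to $\psi'(x_1,x_2):=\psi(x_1,x_2)\wedge\phi_1(x_1,c)\wedge\phi_2(x_2,c)$ inside the $c$-definable set $\mathrm{loc}(a_1/c)\times\mathrm{loc}(a_2/c)$, exactly as in the proof of Lemma~\ref{a1a2form} but relativised over $c$ — one checks $\exists x_1\,\psi'(x_1,b_2)\leftrightarrow b_2\in\mathrm{loc}(a_2/c)$ and, for $b_2\in\mathrm{loc}(a_2/c)$, that $\psi'(M^{eq},b_2)=\mathrm{loc}(a_1/c)$ up to the non-forking/additivity argument (using $a_1\indep{c}{}a_2$, which holds because $\{a_1,a_2,a_3\}$ is pairwise independent and $a_i\indep{}{}a_j$ upgrades, via the independence theorem / pairwise independence, to $a_1\indep{a_3}{}a_2$ — this is where I must be slightly careful). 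Fubini over $\rho$ then gives
\[
\mathrm{d}(X)=\mathrm{d}(a_3)+\mathrm{d}(a_1/a_3)+\mathrm{d}(a_2/a_3),\qquad \mu(X)=\mu(a_3)\,\mu(a_1/a_3)\,\mu(a_2/a_3).
\]
Comparing the two expressions for $\mathrm{d}(X)$ and using additivity of $\mathrm{d}$ ($\mathrm{d}(a_i)=\mathrm{d}(a_i/a_3)+\mathrm{d}(a_3)-\mathrm{d}(a_3/a_i)$, and $\mathrm{d}(a_3/a_i)=\mathrm{d}(a_3)$ by $a_3\indep{}{}a_i$, so $\mathrm{d}(a_i/a_3)=\mathrm{d}(a_i)$ — wait, more carefully $\mathrm{d}(a_ia_3)=\mathrm{d}(a_i/a_3)+\mathrm{d}(a_3)=\mathrm{d}(a_3/a_i)+\mathrm{d}(a_i)=\mathrm{d}(a_3)+\mathrm{d}(a_i)$) gives $k=\mathrm{d}(a_3)$, i.e.\ $\mathrm{d}(\mathrm{tp}(a_3/a_1)\cup\mathrm{tp}(a_3/a_2))=\mathrm{d}(a_3)$. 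Comparing the two expressions for $\mu(X)$ gives $\mu(a_1)\mu(a_2)\,\nu=\mu(a_3)\mu(a_1/a_3)\mu(a_2/a_3)$; now $\mu(a_ia_3)=\mu(a_i)\mu(a_3/a_i)=\mu(a_i)\mu(a_3)$ (using $\mu(a_3/a_i)=\mu(a_3)$, which is Lemma~\ref{a1a2form}/additivity applied to the pair $a_i,a_3$) and also $\mu(a_ia_3)=\mu(a_3)\mu(a_i/a_3)$, so $\mu(a_i/a_3)=\mu(a_i)\mu(a_3)/\mu(a_3)$... let me just record $\mu(a_i)\mu(a_3)=\mu(a_3)\mu(a_i/a_3)$, hence $\mu(a_i/a_3)=\mu(a_i)$ is \emph{false} in general; rather $\mu(a_i/a_3)\mu(a_3)=\mu(a_i)\mu(a_3/a_i)=\mu(a_i)\mu(a_3)$. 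Plugging in: $\mu(a_1)\mu(a_2)\nu=\mu(a_3)\cdot\frac{\mu(a_1)\mu(a_3/a_1)}{1}\cdot\ldots$ — cleanest is to substitute $\mu(a_i/a_3)=\mu(a_i)\mu(a_3/a_i)/\mu(a_3)=\mu(a_i)\mu(a_3)/\mu(a_3)$ hmm that needs $\mu(a_3/a_i)=\mu(a_3)$, true. Then $\nu=\mu(a_3)\mu(a_1/a_3)\mu(a_2/a_3)/(\mu(a_1)\mu(a_2))$, and using $\mu(a_i)\mu(a_3/a_i)=\mu(a_i)\mu(a_3)=\mu(a_3)\mu(a_i/a_3)$ we get $\mu(a_i/a_3)/\mu(a_i)=\mu(a_3/a_i)/\mu(a_3)$, so $\nu=\mu(a_3)\cdot\frac{\mu(a_3/a_1)}{\mu(a_3)}\cdot\frac{\mu(a_3/a_2)}{\mu(a_3)}=\frac{\mu(a_3/a_1)\mu(a_3/a_2)}{\mu(a_3)}$, which is \eqref{measeq0}.)

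The main obstacle I anticipate is the careful bookkeeping of which independences are available at each stage — in particular, establishing that in the fibres over $c\equiv a_3$ one genuinely has $b_1\indep{c}{}b_2$ (not merely $b_1\indep{}{}b_2$) so that Fubini can be applied to the relativised formula $\psi'$; this requires combining the pairwise independence hypothesis with the independence theorem, and it is the only place where the full strength of Fact~\ref{indfact} (rather than just its consistency conclusion) enters. Everything else is a two-way Fubini computation of the same set, exactly in the spirit of Lemma~\ref{a1a2form}, with Corollary~\ref{constant} supplying the crucial constancy of the fibre measures.
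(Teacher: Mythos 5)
Your proposal is, in its skeleton, the paper's own proof: the same definable set $\psi(x_1,x_2)\wedge\phi_1(x_1,y)\wedge\phi_2(x_2,y)$, the same two Fubini computations (over the $(x_1,x_2)$-coordinates and over the $y$-coordinate), Corollary \ref{constant} for the constancy of the fibre measures, and the same final algebra. The one step that would fail as written is exactly the one you flagged as delicate: the fibre over $c\equiv a_3$. You assert that pairwise independence of $a_1,a_2,a_3$ ``upgrades, via the independence theorem'' to $a_1\indep{a_3}{}a_2$, and more generally that the pairs $(b_1,b_2)$ in the fibre over $c$ satisfy $b_1\indep{c}{}b_2$. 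This is false: pairwise independence does not imply independence over one of the three tuples (consider an affinely dependent triple in a vector space over a finite field, or, in the structures of Section \ref{finally}, two vertices at distance two together with the midpoint of a path joining them). The fibre over $c$ genuinely contains pairs with $b_1\indep{}{}b_2$ for which $b_1\indep{c}{}b_2$ fails, so the ``relativised Lemma \ref{a1a2form}'' cannot be applied verbatim.

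What rescues the computation --- and what the paper actually does --- is not an upgrade of independence but an additivity argument: inside the fibre one introduces the $c$-definable set $\chi$ of pairs that \emph{are} independent over $c$; its complement within the fibre consists of forking configurations and therefore has strictly smaller dimension by Theorem \ref{dimtheom}, so it contributes nothing to the measure; the $\chi$-part is a union of non-forking extensions and gets measure $\mu(a_1/c)\mu(a_2/c)$ by the lemma on non-forking extensions at the start of Subsection \ref{omegameas} (one also checks that $\chi$ implies $\psi$, using $b_1\indep{}{}c$ and transitivity). You gesture at this with ``up to the non-forking/additivity argument'', so the fix is within reach, but as stated the justification is wrong rather than merely incomplete. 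A second, smaller instance of the same misconception: you justify constancy of the fibre dimension over $(b_1,b_2)\in\psi(M^{eq},M^{eq})$ by saying that $\mathrm{tp}(b_1b_2)$ is constant there; it is not --- several complete $2$-types can extend $\mathrm{tp}(a_1)\cup\mathrm{tp}(a_2)$ together with independence, and this is precisely why Corollary \ref{constant} is needed for the measure. The fibre dimension is nonetheless constantly equal to $\mathrm{d}(a_3)$, because the independence theorem supplies a realisation $c$ with $c\indep{}{}b_1b_2$ while the fibre sits inside $\mathrm{loc}(a_3)$.
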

In the theorem, the set defined by $\mathrm{tp}({a}_3/{a}_1)\cup \mathrm{tp}({a}_3/{a}_2)$ is $\mathrm{loc}({a}_3/{a}_1)\cap \mathrm{loc}({a}_3/{a}_2)$. The statement about the dimension corresponds to the independence theorem over $\emptyset$. But we get a much stronger result: the events $"x\in \mathrm{loc}(a_3/a_1)"$ and $"x\in \mathrm{loc}(a_3/a_1)"$ are probabilistically independent in the measure $\mu'$ induced by $\mu$ on definable subsets of $\mathrm{loc}(a_3)$.
\begin{proof}
We write $M$ instead of $M^{eq}$ in the proof to simplify the notation. For $i\in\{1, 2\}$, let $\phi_{i3}({x}_i, {x}_3)$ isolate $\mathrm{tp}({a}_i{a}_3)$. Let $\psi({x}_1, {x}_2)$ be the formula satisfying the condition \ref{a1a2indep} shown to exist in Lemma \ref{a1a2form}. Our proof consists of calculating in different ways the measure of the set $S$ defined by the formula
\[S(x_1, x_2, x_3):=\phi_{13}(x_1, x_3)\wedge \phi_{23}(x_2, x_3)\wedge \psi(x_1, x_2).\]
We apply Fubini, projecting onto the first two coordinates $\pi_{x_1x_2}:M^3\to M^2$. The formula $\exists x_3 S(x_1, x_2, x_3)$ is satisfied by any $b_1b_2\vDash \psi(x_1, x_2)$ by Fact \ref{indfact}. Hence,  
\[\exists x_3 S(x_1, x_2, x_3)\equiv \psi(x_1, x_2),\]
and so by Lemma \ref{a1a2form}, 
\begin{equation*}
    \mu(\pi_{x1x2}(S))=\mu(a_1)\mu(a_2).
\end{equation*}
For $b_1b_2\in M$ we consider $\pi_{x_1x_2}^{-1}(b_1b_2)\cap S$, i.e. the set defined by $S(b_1, b_2, x_3)$. This definable set is non-empty if and only if $b_1 b_2\vDash \psi(x_1, x_2)$. Also, by Fact \ref{indfact}, for any such pair, $S(b_1, b_2, x_3)$ will have maximal dimension. Finally, by Corollary \ref{constant}, 
\[\mu(S(b_1, b_2, M))=\mu(S(a_1, a_2, M))= \mu(\mathrm{tp}({a}_3/{a}_1)\cup \mathrm{tp}({a}_3/{a}_2)).\]
Hence, by Fubini with respect to $\pi_{x_1x_2}$ we obtain:
\begin{equation}\label{measeq1}
    \mu(S)=\mu(\mathrm{tp}({a}_3/{a}_1)\cup \mathrm{tp}({a}_3/{a}_2))\mu(a_1)\mu(a_2).
\end{equation}
Let us compute the measure with respect to $\pi_{x_3}:M^3\to M$. The formula $\exists x_1 x_2 S(x_1, x_2, x_3)$ isolates $\mathrm{tp}(a_3)$ and so $\mu(\exists x_1 x_2 S(x_1, x_2, M))=\mu(a_3)$. Meanwhile, for $b_3\in M$, the formula $S(x_1, x_2, b_3)$ is consistent only if $b_3\equiv a_3$. By invariance we may consider $S(x_1, x_2, a_3)$. To compute the measure of the set defined by this formula we apply Fubini again. Projecting onto $x_1$, $\exists x_1 S(x_1, x_2, a_3)$ isolates $\mathrm{tp}(a_2/a_3)$ and so, $\mu(\exists x_1 S(x_1, M, a_3))=\mu(a_2/a_3)$. Meanwhile, for $b_2\equiv_{a_3} a_2$ consider $S(x_1, b_2, a_3)$. Again, by invariance this defines a set with the same measure as $S(x_1, a_2, a_3)$. We claim that $\mu(S(M, a_2, a_3))=\mu(a_1/a_3)$. By Lemma \ref{a1a2form}, there is a formula $\chi(x_1, x_2)$ over $a_3$ isolating the pairs $b_1\equiv_{a_3} a_1$ and $b_2\equiv_{a_3} a_2$ which are independent over $a_3$. So, we may consider $S(M, a_2, a_3)$ as the disjoint union
\[(S(M, a_2, a_3)\wedge \chi(M, a_2)) \sqcup (S(M, a_2, a_3)\wedge \neg\chi(M, a_2)).\]
But the definable set on the right of the disjoint union has lower dimension by the dimension theorem, and so, by additivity,
\[\mu(S(M, a_2, a_3))=\mu(S(M, a_2, a_3)\wedge \chi(M, a_2)).\]
Consider the extension of $\mathrm{tp}(a_1/a_3)$ to $a_2$. By the same reasoning, 
\[\mu(a_1/a_3)=\mu(\phi_{13}(M, a_3)\wedge \chi(M, a_2)).\]
Noting that $\chi(x_1, x_2)$ implies $\psi(x_1, x_2)$, we get that 
\[\mu(S(M, a_2, a_3))=\mu(a_1/a_3).\]
Hence, our calculations with respect to $\pi_{x_3}$ yield:
\begin{equation}\label{measeq2}
    \mu(S)=\mu(a_3)\mu(a_2/a_3)\mu(a_1/a_3).
\end{equation}

Recall that by Fubini $\mu(a_i a_j)=\mu(a_i/a_j)\mu(a_j)$. Using this fact and comparing the equations \ref{measeq1} and \ref{measeq2}, we obtain 
\begin{equation}\label{measeq5}
 \mu(\mathrm{tp}({a}_3/{a}_1)\cup \mathrm{tp}({a}_3/{a}_2))=  \frac{\mu({a}_2{a}_3)\mu({a}_1{a}_3)}{\mu({a}_1)\mu({a}_2)\mu({a}_3)}.
\end{equation}
Applying Fubini again, we get the desired result.
\end{proof}

\begin{corollary}\label{triang}
Let $\mathcal{M}^{eq}$ be $MS$-measurable, $\omega$-categorical with $\mathrm{acl}^{eq}(\emptyset)=\mathrm{dcl}^{eq}(\emptyset)$. Suppose that ${a}_1, a_2, {a}_3$ are tuples such that ${a}_i\indep{}{}{a}_j$ for $1\leq i<j\leq 3$. Let $p_{ij}({x}_i, {x}_j)=\mathrm{tp}({a}_i, {a}_j)$. Then, 
\[\mathrm{d}(p_{12}({x}_1, {x}_2)\cup p_{23}({x}_2, {x}_3)\cup p_{13}({x}_1, {x}_3))=\sum_{i\leq 3} \mathrm{d}(a_i), \text{ and }\]
\[\mu(p_{12}({x}_1, {x}_2)\cup p_{23}({x}_2, {x}_3)\cup p_{13}({x}_1, {x}_3))=\frac{\mu({a}_1 {a}_2 )\mu({a}_1 {a}_3 )\mu({a}_2 {a}_3 )}{\mu({a}_1)\mu({a}_2)\mu({a}_3)}.\]
\end{corollary}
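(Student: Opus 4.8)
The plan is to realise the partial type $p_{12}(x_1,x_2)\cup p_{23}(x_2,x_3)\cup p_{13}(x_1,x_3)$ as a single formula $T(x_1,x_2,x_3)$ (possible by $\omega$-categoricity and Ryll--Nardzewski), defining a set $T\subseteq(M^{eq})^3$, and to compute $h(T)=(\mathrm{d}(T),\mu(T))$ by one application of Fubini (Definition \ref{MScond}), projecting onto the first and third coordinates and plugging Theorem \ref{indeptheorem} in for the fibres. As in the proof of Theorem \ref{indeptheorem}, I write $M$ for $M^{eq}$.

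First I would identify the projection. Let $\pi\colon M^3\to M^2$ be the projection onto $(x_1,x_3)$. I claim $\exists x_2\,T(x_1,x_2,x_3)$ is equivalent to $p_{13}(x_1,x_3)$, so that $\pi(T)=\mathrm{loc}(a_1a_3)$. The implication $\vdash p_{13}$ is immediate from the conjunct $p_{13}$ of $T$. Conversely, take $(b_1,b_3)\vDash p_{13}$; then $(b_1,b_3)\equiv(a_1,a_3)$, so $b_1\indep{}{}b_3$ because $a_1\indep{}{}a_3$. The complete type over $b_1$ determined by $p_{12}(b_1,x_2)$ and the complete type over $b_3$ determined by $p_{23}(x_2,b_3)$ are both non-forking extensions of $\mathrm{tp}(a_2)$ (using $a_2\indep{}{}a_1$ and $a_2\indep{}{}a_3$), so by the independence theorem over $\emptyset$ (Fact \ref{indfact}) they are jointly realised by some $b_2$, whence $(b_1,b_2,b_3)\in T$. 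Hence $\pi(T)=\mathrm{loc}(a_1a_3)$, and $\mathrm{d}(\pi(T))=\mathrm{d}(a_1a_3)$, $\mu(\pi(T))=\mu(a_1a_3)$.

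Next I would compute the fibres. For $(b_1,b_3)\in\pi(T)$ we have $(b_1,b_3)\equiv(a_1,a_3)$, so by invariance of $h$ the fibre $\pi^{-1}(b_1,b_3)\cap T$, i.e.\ the set defined by $T(b_1,x_2,b_3)$, has the same dimension-measure as the set defined by $T(a_1,x_2,a_3)$; since the conjunct $p_{13}(a_1,a_3)$ is then a true sentence, this set is $\mathrm{loc}(a_2/a_1)\cap\mathrm{loc}(a_2/a_3)$. Applying Theorem \ref{indeptheorem} to the triple $(a_1,a_3,a_2)$ in place of $(a_1,a_2,a_3)$ — legitimate, since the hypothesis of pairwise non-forking independence is symmetric in the three tuples — gives $\mathrm{d}\bigl(\mathrm{loc}(a_2/a_1)\cap\mathrm{loc}(a_2/a_3)\bigr)=\mathrm{d}(a_2)$ and $\mu\bigl(\mathrm{loc}(a_2/a_1)\cap\mathrm{loc}(a_2/a_3)\bigr)=\mu(a_2/a_1)\mu(a_2/a_3)/\mu(a_2)$. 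Since every fibre over $\pi(T)$ carries this same dimension-measure, Fubini yields
\[\mathrm{d}(T)=\mathrm{d}(a_1a_3)+\mathrm{d}(a_2),\qquad \mu(T)=\mu(a_1a_3)\,\frac{\mu(a_2/a_1)\mu(a_2/a_3)}{\mu(a_2)}.\]

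Finally I would simplify using the Fubini identities of Remark \ref{Fub} together with Theorem \ref{dimtheom}: since $a_1\indep{}{}a_3$ we get $\mathrm{d}(a_1a_3)=\mathrm{d}(a_1/a_3)+\mathrm{d}(a_3)=\mathrm{d}(a_1)+\mathrm{d}(a_3)$, so $\mathrm{d}(T)=\sum_{i\leq 3}\mathrm{d}(a_i)$; and $\mu(a_2/a_1)=\mu(a_1a_2)/\mu(a_1)$, $\mu(a_2/a_3)=\mu(a_2a_3)/\mu(a_3)$, so that $\mu(T)=\mu(a_1a_2)\mu(a_1a_3)\mu(a_2a_3)/(\mu(a_1)\mu(a_2)\mu(a_3))$, as claimed. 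I expect the only non-routine step to be the surjectivity claim $\pi(T)=\mathrm{loc}(a_1a_3)$: this is exactly where the independence theorem over $\emptyset$ must be invoked, to produce the middle coordinate $b_2$ amalgamating the $p_{12}$-type over $b_1$ with the $p_{23}$-type over $b_3$. Everything else is bookkeeping with Fubini and the invariance of $h$.
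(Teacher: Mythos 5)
Your proposal is correct and is essentially the paper's own argument: the paper likewise realises the partial type as a definable set and applies Fubini once, projecting onto two of the coordinates (it uses $(x_1,x_2)$ rather than your $(x_1,x_3)$, an immaterial permutation) and evaluating the fibre via Theorem \ref{indeptheorem}. Your explicit verification that the projection is onto $\mathrm{loc}(a_1a_3)$ via the independence theorem over $\emptyset$ is a detail the paper leaves implicit, and it is exactly the right justification.
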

\begin{proof}
Considering the set defined by $p_{12}({x}_1, {x}_2)\cup p_{23}({x}_2, {x}_3)\cup p_{13}({x}_1, {x}_3)$, we apply Fubini by projecting onto the first two coordinates. The projection isolates 
$\mathrm{tp}(a_1a_2)$, and the fiber of $a_1a_2$ isolates $\mathrm{tp}({a}_3/{a}_1)\cup \mathrm{tp}({a}_3/{a}_2)$. By Fubini
\[\mu(p_{12}({x}_1, {x}_2)\cup p_{23}({x}_2, {x}_3)\cup p_{13}({x}_1, {x}_3))=   \mu(\mathrm{tp}({a}_3/{a}_1)\cup \mathrm{tp}({a}_3/{a}_2))\mu({a}_1{a}_2).\]
Substituting with equation \ref{measeq5}, the result follows. 
\end{proof}
\section{Hrushovski constructions}\label{Hrushconstr}

In this section, we focus on $\omega$-categorical Hrushovski constructions. We begin with Subsection \ref{introH} which gives a brief introduction to these structures following \cite{Evans} and \cite{Wagner:ST}. Then, in Subsection \ref{Hrusheqns}, we find the form that the equations of Corollary \ref{amacor} would take in the context of $\omega$-categorical Hrushovski constructions if these were $MS$-measurable.

\subsection{Building \texorpdfstring{$\omega$}{omega}-categorical Hrushovski constructions}\label{introH}
We focus on graphs for simplicity of notation, but we can build Hrushovski constructions in relational languages in general \cite{Wagner:ST}. Let $\overline{\mathcal{K}}$ be the class of graphs and $\mathcal{K}$ be the class of finite graphs. Now, for $\alpha\in\mathbb{R}^{>0}$, and $A$ a finite graph, we define the \textbf{predimension} of $A$, $\delta(A)$ to be:
\[\delta(A)=\alpha\vert  A\vert -\vert  E(A)\vert ,\]
where $\vert  E(A)\vert $ is the number of edges of $A$.

For $A\subseteq B,C\in\mathcal{K}$, suppose $g:A\to B$ and $f:A\to C$ are embeddings of $A$ in $B$ and $C$ respectively. The \textbf{free amalgamation} of $B$ and $C$ over $A$, is the (unique) graph $B\amalg_A C$ such that there are embeddings $h:B\to B\amalg_A C$, and $k:C\to B\amalg_A C$ such that $h(g(A))=k(f(A))$ and $E(B\amalg_A C)=E(B)\cup E(C)$.

The predimension we introduced satisfies \textbf{submodularity}. That is, for $D\in\overline{\mathcal{K}}$, and $B,C\subset D$ finite, we have that
\[\delta(B\cup C)\leq \delta(B)+\delta(C)-\delta(B\cap C),\]
with equality holding if and only $B$ and $C$ are freely amalgamated over $B\cap C$.


\begin{definition} Let $B\in\overline{\mathcal{K}}$, $A\subseteq B$ finite. We say that $A$ is \textbf{d-closed} in $B$, and write $A\leq B$ if $\delta(A)<\delta(B')$ for any finite $B'$ such that $A\subsetneq B'\subseteq B$. We say that $A$ is \textbf{self sufficient} in $B$ and write $A\leq^*B$ if $\delta(A)\leq\delta(B')$ for any finite $B'$ such that $A\subseteq B'\subseteq B$.
\end{definition}

Let $f:\mathbb{R}^{\geq0}\to\mathbb{R}^{\geq0}$ be continuous, increasing and unbounded with $f(0)=0$. We let 
\[\mathcal{K}_f:=\{A\in\mathcal{K}: \delta(A')\geq f(\vert A'\vert ) \text{ for any }A'\subseteq A\}.\]

\begin{definition} 
We say that $\mathcal{K}_f$ has the \textbf{free amalgamation property} if given $A_0\leq A_1, A_2\in\mathcal{K}_f$ we have that $A_1\amalg_{A_0} A_2\in\mathcal{K}_f$.
\end{definition}

When $\mathcal{K}_f$ has the free amalgamation property we can build an $\omega$-categorical Hrushovski construction from it \cite[Theorems 3.2 \& 3.19]{Evans}:

\begin{theorem}\label{Hrushconstrtheorem} Suppose that $\mathcal{K}_f$ has the free amalgamation property. Then, there is a unique countable structure $\mathcal{M}_f$ such that:
\begin{itemize}
    \item \textbf{Union of Chains:} $\mathcal{M}_f$ is given by the union of a chain $M_1\leq M_2\leq \dots$ where $M_i\in\mathcal{K}_f$.
    \item \textbf{Substructures in} $\pmb{\mathcal{K}_f}$\textbf{:} every $A\in\mathcal{K}_f$ is isomorphic to some $A'\leq M_f$.
    \item \textbf{Extension Property:} given $A\leq M_f$ finite and $A\leq B\in \mathcal{K}_f$, there is an embedding of $B$ over $A$ in $\mathcal{M}_f$ such that $B\leq M_f$.
\end{itemize}
Furthermore, $\mathcal{M}_f$ is $\omega$-categorical.
\end{theorem}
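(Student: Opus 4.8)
The plan is to carry out a Fra\"{i}ss\'{e}\-/type amalgamation construction relative to the notion of self\-/sufficient embedding, which is the standard route to Hrushovski's generic structures (I follow the pattern of \cite{Evans}). The first step is to check that $(\mathcal{K}_f,\leq)$ is an amalgamation class. Closure of $\mathcal{K}_f$ under induced substructures is immediate from the definition, and since the language is finite relational there are only countably many isomorphism types in $\mathcal{K}_f$, each containing $\emptyset$. The relation $\leq$ is a partial order that is transitive, preserved by isomorphism, satisfies $\emptyset\leq A$, and has the property that $A\leq C$ together with $A\subseteq B\subseteq C$ forces $A\leq B$; these are routine from $d$\-/closedness. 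Joint embedding is the case of amalgamation over $\emptyset$. For amalgamation, suppose $A_0\leq A_1$ and $A_0\leq A_2$ in $\mathcal{K}_f$: by the assumed free amalgamation property the free amalgam $A_1\amalg_{A_0}A_2$ lies in $\mathcal{K}_f$, and submodularity together with its equality clause for free amalgams gives $A_1\leq A_1\amalg_{A_0}A_2$ and $A_2\leq A_1\amalg_{A_0}A_2$: for $A_1\subsetneq B\subseteq A_1\amalg_{A_0}A_2$ one has $\delta(B)=\delta(A_1)+\delta(B\cap A_2)-\delta(A_0)$, and $B$ meets $A_2\setminus A_0$, so $B\cap A_2\supsetneq A_0$, whence $\delta(B\cap A_2)>\delta(A_0)$ since $A_0\leq A_2$; symmetrically for $A_2$.

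Next I would build $\mathcal{M}_f$ by a bookkeeping construction of a chain $M_1\leq M_2\leq\cdots$ in $\mathcal{K}_f$: enumerate all requests of the form ``a finite $A\leq M_n$ with a prescribed extension $A\leq B\in\mathcal{K}_f$'', and at each stage discharge one request by amalgamating a copy of $B$ onto the current $M_n$ over $A$, so the result is again in $\mathcal{K}_f$ and contains $M_n$ and the copy of $B$ as $\leq$\-/substructures. Put $\mathcal{M}_f=\bigcup_n M_n$. The key lemma here is that $\leq$ survives unions of chains: if $A\leq M_n$ and $M_n\leq M_m$ for all $m\geq n$, then $A\leq M_f$, because any finite $B$ with $A\subsetneq B\subseteq M_f$ already lies in some $M_m$ and transitivity of $\leq$ applies. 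Granting this, Union of Chains holds by construction, Substructures in $\mathcal{K}_f$ because every $A\in\mathcal{K}_f$ is served by the request with $\emptyset\leq A$, and the Extension Property because the enumeration eventually serves every request over every finite $\leq$\-/substructure.

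For uniqueness and $\omega$\-/categoricity the engine is the self\-/sufficient closure. In any model of $\mathrm{Th}(\mathcal{M}_f)$ every finite substructure $Y$ satisfies $\delta(Y)\geq f(\vert Y\vert)$, so since $f$ is unbounded the value $d(X):=\inf\{\delta(Y):X\subseteq Y\text{ finite}\}$ is attained; submodularity then produces a smallest finite $\mathrm{cl}(X)\supseteq X$ attaining it, and this set is self\-/sufficient, with $\vert\mathrm{cl}(X)\vert\leq f^{-1}(\delta(X))\leq f^{-1}(\alpha\vert X\vert)$. Given two countable structures $\mathcal{N},\mathcal{N}'$ satisfying the three properties, a back\-/and\-/forth between finite self\-/sufficient substructures — absorbing a new point $b$ by replacing the current domain $X$ with $\mathrm{cl}(Xb)$ and then invoking the Extension Property on both sides together with the amalgamation above — yields an isomorphism $\mathcal{N}\cong\mathcal{N}'$, proving uniqueness. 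The same back\-/and\-/forth shows that $\mathrm{tp}(\overline{a})$ in $\mathcal{M}_f$ is determined by the isomorphism type of the pair $\bigl(\mathrm{cl}(\overline{a}),\overline{a}\bigr)$; since $\vert\mathrm{cl}(\overline{a})\vert\leq f^{-1}(\alpha n)$ for $n$\-/tuples there are only finitely many such types, so $\mathcal{M}_f$ is $\omega$\-/categorical by Ryll--Nardzewski.

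I expect the main obstacle to be the closure operator: verifying that $\mathrm{cl}(X)$ is finite, that it is the smallest self\-/sufficient set containing $X$, and — in tandem with the fact that $\leq$ passes to unions of chains — that it converts the abstract amalgamation of $\mathcal{K}_f$ into a genuine back\-/and\-/forth in the limit structure. Once these are in place, the remainder is the usual Fra\"{i}ss\'{e} bookkeeping.
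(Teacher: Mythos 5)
Your proposal is correct and follows exactly the standard route that the paper relies on: it gives no proof of its own but cites Evans (Theorems 3.2 and 3.19), whose argument is precisely this Fra\"{i}ss\'{e}-style amalgamation relative to $\leq$, with the free amalgamation property supplying the amalgamation step, the bounded self-sufficient closure $\vert\mathrm{cl}(\overline{a})\vert\leq f^{-1}(\alpha n)$ giving the back-and-forth, uniqueness, and $\omega$-categoricity via Ryll--Nardzewski. No gaps worth flagging.
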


We say that $f$ is a \textbf{good function} if it is piece-wise smooth, $f'(x)\leq 1/x$ and $f'(x)$ is non-decreasing.

\begin{lemma}\label{showfreeam} Suppose that $f(x)$ is a good function for $x\geq t$. and that for $A_0\leq A_1,A_2$, with $\vert   A_i\vert \leq t$, we have $\delta(A_0\amalg_{A_1}A_2)\geq f(\vert A_0\amalg_{A_1}A_2\vert )$. Then $\mathcal{K}_f$ has the free amalgamation property.
\end{lemma}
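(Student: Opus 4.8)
The plan is to prove by induction on $|A_1 \amalg_{A_0} A_2|$ that whenever $A_0 \leq A_1, A_2 \in \mathcal{K}_f$, the free amalgam $B := A_1 \amalg_{A_0} A_2$ lies in $\mathcal{K}_f$; i.e.\ $\delta(B') \geq f(|B'|)$ for every $B' \subseteq B$. Since $A_1, A_2 \in \mathcal{K}_f$, any $B'$ contained entirely in (a copy of) $A_1$ or $A_2$ is already controlled, so the only subsets we must worry about are those $B'$ which genuinely split across the amalgam, meaning $B' \cap A_i \not\subseteq A_0$ for both $i$. For such a $B'$, write $B'_i = B' \cap A_i$; note $B'$ is the free amalgam of $B'_1$ and $B'_2$ over $B'_0 := B' \cap A_0$, so by the equality case of submodularity $\delta(B') = \delta(B'_1) + \delta(B'_2) - \delta(B'_0)$. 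Replacing $B$ by $B'$, it therefore suffices to prove the bound $\delta(B) \geq f(|B|)$ for $B = A_1 \amalg_{A_0} A_2$ itself, under the assumption that no proper subset violates the required inequality (this is exactly what the induction hypothesis buys us, after passing to the minimal counterexample).

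The first reduction I would make uses $A_0 \leq A_i$: among all subsets of $A_i$ containing $A_0$, $A_0$ minimizes $\delta$, so $\delta(A_i) \geq \delta(A_0)$, and more usefully, for the amalgam $B$ we get from submodularity $\delta(B) = \delta(A_1) + \delta(A_2) - \delta(A_0) \geq \max\{\delta(A_1), \delta(A_2)\} \geq \delta(A_0)$. Next, if either $|A_i| \leq t$ fails, I would cut $A_i$ down: pick a chain $A_0 \leq C \leq A_i$ with $C$ of size just above $t$ or handle the large case directly via goodness of $f$. The essential analytic input is that for a good function ($f$ piecewise smooth, $f'(x) \leq 1/x$, $f'$ non-decreasing) one has a concavity-type inequality $f(m+n-k) \leq f(m) + f(n) - f(k)$ whenever $k \leq m, n$ and all arguments are $\geq t$ — this follows from $f' $ non-decreasing by integrating, since moving mass from the interval $[k, n]$ up to $[m, m+n-k]$ only increases the integrand. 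Granting this, with $m = |A_1|, n = |A_2|, k = |A_0|$,
\[
\delta(B) = \delta(A_1) + \delta(A_2) - \delta(A_0) \geq f(|A_1|) + f(|A_2|) - f(|A_0|) \geq f(|A_1| + |A_2| - |A_0|) = f(|B|),
\]
using $\delta(A_i) \geq f(|A_i|)$ (as $A_i \in \mathcal{K}_f$) and, crucially, $\delta(A_0) \leq f(|A_0|)$ — wait, that inequality goes the wrong way, so the step that actually makes this work is the hypothesis on \emph{small} amalgams: when $|A_0|, |A_1 \amalg_{A_0} A_2| \leq t$ (or, after the reduction above, when the relevant sets are small) we are directly given $\delta(A_0 \amalg_{A_1} A_2) \geq f(\cdot)$, and when everything is large we instead bound $\delta(A_0) - f(|A_0|)$ against $\delta(A_i) - f(|A_i|) \geq 0$ by exploiting $A_0 \leq A_i$ together with $f'(x) \leq 1/x$ (so that adding $\ell$ points of predimension-cost raises $\delta$ by at least $\alpha \ell \geq$ the increase in $f$ only in a controlled way). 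I would organize the large-set case as: increments of $\delta$ along $A_0 \subsetneq \cdots \subsetneq A_i$ are each at least some positive amount governed by $d$-closure, while increments of $f$ are at most $f'(x)\,dx \leq dx/x$, and summing telescopes.

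The main obstacle I anticipate is precisely this bookkeeping at the boundary between the "small" regime (size $\leq t$), where the hypothesis is handed to us, and the "large" regime, where we must run the $f$-is-good estimate; the induction on $|B|$ must be set up so that cutting a large $A_i$ down to a $d$-closed intermediate set $A_0 \leq C \leq A_i$ with $C \in \mathcal{K}_f$ (which exists, taking $C$ to be a $d$-closure of $A_0$ in $A_i$ of appropriate size) reduces to an already-known case without circularity, and so that the analytic inequality $f(m + n - k) \leq f(m) + f(n) - f(k)$ is invoked only when $m, n, k \geq t$. I would isolate that inequality as a separate sublemma about good functions, prove it by a one-line integration argument using monotonicity of $f'$, and then the amalgamation argument becomes the clean submodularity computation displayed above.
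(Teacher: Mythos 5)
The paper does not actually prove this lemma: it defers to Example 3.20 of \cite{Evans}, which in turn reproduces Hrushovski's original unpublished argument, so you are supplying a proof the paper outsources. Your reduction is the standard and correct one. A minimal subset $X$ of $E=A_1\amalg_{A_0}A_2$ violating $\delta(X)\geq f(\vert X\vert)$ is itself a free amalgam $X_1\amalg_{X_0}X_2$ with $X_i=X\cap A_i\in\mathcal{K}_f$ and $X_0=X\cap A_0\leq X_i$ (this last point, which your induction needs, follows from Lemma \ref{basicleq}(1) and should be said explicitly), so it suffices to prove $\delta(E)\geq f(\vert E\vert)$ for the full amalgam; and when all pieces have size at most $t$ this is exactly the hypothesis.

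The large case, however, has a genuine gap which you half-acknowledge but do not close. The chain $\delta(A_1)+\delta(A_2)-\delta(A_0)\geq f(\vert A_1\vert)+f(\vert A_2\vert)-f(\vert A_0\vert)$ needs $\delta(A_0)\leq f(\vert A_0\vert)$, which is unavailable, and neither proposed repair works as sketched. The inequality that actually has to be proved is
\[
\delta(A_2)-\delta(A_0)\;\geq\; f(\vert A_1\vert+\vert A_2\vert-\vert A_0\vert)-f(\vert A_1\vert),
\]
which avoids $f(\vert A_0\vert)$ altogether; establishing it from $A_0\leq A_2$, $A_2\in\mathcal{K}_f$ and the two analytic properties of a good function is the entire content of the Hrushovski/Evans argument, and it is precisely the step your plan leaves open. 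Your telescoping sketch rests on a false premise: $A_0\leq A_i$ guarantees only that $\delta(B)>\delta(A_0)$ for every intermediate $A_0\subsetneq B\subseteq A_i$, not that consecutive increments of $\delta$ along an arbitrary chain are positive --- they can be negative, so the step-by-step comparison with the increments of $f$ does not telescope unless the chain (or the induction on $\vert A_2\setminus A_0\vert$) is chosen with care. Two smaller points: the concavity-type estimate $f(m+n-k)\leq f(m)+f(n)-f(k)$ requires $f'$ non-\emph{increasing} (your integration argument as written proves the reverse inequality from $f'$ non-decreasing); the paper's literal wording ``$f'$ non-decreasing'' is evidently a typo, since together with $f'(x)\leq 1/x$ and $f$ increasing unbounded it forces $f'\leq 0$. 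And even granting that estimate, it does not combine with the available inequalities to give the conclusion, as you yourself observe. In sum: correct frame, missing core estimate.
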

\begin{proof}
This is substantially the proof of Example 3.20 in \cite{Evans}, which goes back to Hrushovski's original unpublished note \cite{omps}. 
\end{proof}

Furthermore, when $\alpha\in\mathbb{Q}$, choosing $f$ with slow enough growth ensures that $\mathcal{M}_f$ is supersimple of finite $SU$-rank by \cite[Theorem 3.6]{supersimple} (originally proved in \cite{Udiamalg}). This is explained further in the Appendix. 

For $B\in\mathcal{K}_f$ or $B=\mathcal{M}_f$ and finite $A\subseteq B$ there is a unique smallest d-closed subset of $B$ containing $A$ which we call $\mathrm{cl}_B(A)$, i.e, the \textbf{closure} of $A$ in $B$ \cite[Lemma 2.1]{Wagner:ST}.

In the context of $\mathcal{M}_f$, $\mathrm{cl}_{M_f}(A)$ corresponds to algebraic closure of $A$ in $\mathcal{M}_f$ and is bounded above by $f^{-1}(\alpha\vert A\vert )$ \cite[Theorem 3.19]{Evans}. In general, we shall avoid the subscript when it is clear we are speaking of $\mathrm{cl}_{M_f}(A)$.

The predimension of a closed set induces a natural notion of \textbf{dimension}. For $D$ finite or $D=\mathcal{M}_f$, given $A\subseteq D$, we write $\mathrm{d}_D(A)$ for $\delta(\mathrm{cl}_D(A))$. If $\alpha\in\mathbb{N}$, then this dimension is also a natural number, and it may be re-scaled to be a natural number if $\alpha\in\mathbb{Q}$. As above, we omit the subscript and write $d(A)$ for $d_{\mathcal{M}_f}(A)$. 

We write $d_D(A/B)$ for $d_D(AB)-d_B(B)$, where we write $AB$ for $A\cup B$. We say that $A$ and $B$ are \textbf{independent} over $C$ in $D$ if $d_D(A/BC)=d_D(A/B)$. This implies that $\mathrm{cl}_D(AB)\cap \mathrm{cl}_D(AC)=\mathrm{cl}_D(A)$. When $\mathcal{M}_f$ is supersimple (of finite $SU$ rank) we have that this notion of independence induced by the Hrushovski dimension in $\mathcal{M}_f$ is precisely non-forking independence and the Hrushovski dimension on $\mathcal{M}_f$ is $SU$-rank. \\

We have a clear picture of what independence looks like in $\mathcal{M}_f$ \cite[Claim 6.2.9]{Kimsimp}:

\begin{lemma}\label{indepfree} Let $D\in\mathcal{K}_f$ or $D=\mathcal{M}_f$. Suppose $A=B\cap C\leq B, C\leq D$. Then, $B$ and $C$ are independent over $A$ in $D$ if and only if $B$ and $C$ are freely amalgamated over $A$ and $\delta(B\cup C)=\delta(\mathrm{cl}_D(B\cup C))$.
\end{lemma}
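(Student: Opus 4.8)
The plan is to prove Lemma~\ref{indepfree} by unwinding the definitions of independence via the Hrushovski dimension, self-sufficiency, and free amalgamation, using submodularity of $\delta$ as the main algebraic tool. Throughout, fix $D\in\mathcal{K}_f$ or $D=\mathcal{M}_f$, and suppose $A=B\cap C\leq B,C\leq D$ (so $A,B,C$ are all d-closed in $D$, hence $A\leq D$ as well since d-closedness is transitive).

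First I would handle the forward direction. Assume $B$ and $C$ are independent over $A$ in $D$, i.e.\ $d_D(B/AC)=d_D(B/A)$, equivalently $d_D(BC)-d_D(C)=d_D(B)-d_D(A)$, so $d_D(BC)=\delta(B)+\delta(C)-\delta(A)$ (using that $A,B,C$ are d-closed, so their $d_D$ equals their $\delta$). Now $d_D(BC)=\delta(\mathrm{cl}_D(B\cup C))\leq\delta(B\cup C)$ because $B\cup C\subseteq\mathrm{cl}_D(B\cup C)$ is d-closed and d-closure only decreases $\delta$ (indeed $\delta(\mathrm{cl}_D(X))\leq\delta(X')$ for $X\subseteq X'$ with $X'$ between; more precisely $\mathrm{cl}_D(B\cup C)\supseteq B\cup C$ and by definition of d-closure $\delta(\mathrm{cl}_D(B\cup C))\leq\delta(B\cup C)$). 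On the other hand, submodularity gives $\delta(B\cup C)\leq\delta(B)+\delta(C)-\delta(B\cap C)=\delta(B)+\delta(C)-\delta(A)$. Chaining the three relations $\delta(B)+\delta(C)-\delta(A)=d_D(BC)=\delta(\mathrm{cl}_D(B\cup C))\leq\delta(B\cup C)\leq\delta(B)+\delta(C)-\delta(A)$ forces equality everywhere: so $\delta(B\cup C)=\delta(B)+\delta(C)-\delta(A)$, which by the equality case of submodularity means $B$ and $C$ are freely amalgamated over $A=B\cap C$; and $\delta(B\cup C)=\delta(\mathrm{cl}_D(B\cup C))$, which is the second condition.

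For the converse, assume $B$ and $C$ are freely amalgamated over $A$ and $\delta(B\cup C)=\delta(\mathrm{cl}_D(B\cup C))$. Free amalgamation over $A=B\cap C$ is exactly the equality case of submodularity, so $\delta(B\cup C)=\delta(B)+\delta(C)-\delta(A)$. Combined with the hypothesis $\delta(\mathrm{cl}_D(B\cup C))=\delta(B\cup C)$, we get $d_D(BC)=\delta(\mathrm{cl}_D(B\cup C))=\delta(B)+\delta(C)-\delta(A)=d_D(B)+d_D(C)-d_D(A)$, which rearranges to $d_D(B/AC)=d_D(BC)-d_D(AC)=d_D(BC)-d_D(C)=d_D(B)-d_D(A)=d_D(B/A)$, i.e.\ $B$ and $C$ are independent over $A$ in $D$ (note $AC=C$ since $A\subseteq C$, and $AB=B$).

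The main obstacle I anticipate is purely bookkeeping: being careful that $d_D(X)=\delta(X)$ precisely when $X$ is d-closed in $D$, that d-closure is monotone and transitive so that $A\leq D$ follows from $A\leq C\leq D$, and that $\mathrm{cl}_D(B\cup C)$ is the relevant d-closure for computing $d_D(BC)$ even though $B\cup C$ itself need not be d-closed in $D$. One should also note why $\delta(\mathrm{cl}_D(B\cup C))\leq\delta(B\cup C)$: this is immediate from the definition of d-closedness applied to the chain $B\cup C\subseteq\mathrm{cl}_D(B\cup C)$ together with $A\leq B\cup C$ being irrelevant here — it simply follows because the closure is obtained by passing to a minimal-$\delta$ superset. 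No deep input beyond submodularity and its equality case (both quoted in the excerpt) is needed; the argument is a short chain of (in)equalities pinched to equality.
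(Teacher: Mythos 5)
Your argument is correct. Note that the paper does not prove this lemma itself but imports it from \cite[Claim 6.2.9]{Kimsimp}; your submodularity-pinching argument is exactly the standard proof of that fact, so there is nothing genuinely different to compare. The one step worth tightening is the inequality $\delta(\mathrm{cl}_D(B\cup C))\leq\delta(B\cup C)$: this is not literally "by definition" of $\mathrm{cl}_D$ as the smallest d-closed superset, but follows from the standard characterisation that $\mathrm{cl}_D(X)$ is the (unique, maximal) finite superset of $X$ in $D$ of minimal predimension, equivalently that $d_D(X)=\min\{\delta(Y):X\subseteq Y\subseteq_{\mathrm{fin}}D\}$ — you gesture at this at the end, and with that fact in hand the chain of inequalities pinches to equality exactly as you say, giving both directions.
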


\begin{remark}\label{moreind}
With the notation above, if $B$ and $C$ are independent over $A$ in $D$, $BC\leq^* \mathrm{cl}_D(B\cup C)$. To see this, suppose by contradiction that there is some $F$ such that  $BC\subseteq F\subsetneq \mathrm{cl}_D(B\cup C)$ and $\delta(F)<\delta(BC)$. We may choose $F$ to be maximal so that for $F\subsetneq H\subseteq \mathrm{cl}_D(B\cup C)$, $\delta(H)>\delta(F)$. Then,
\[BC\subseteq F\leq \mathrm{cl}_D(B\cup C)\leq D.\] But this contradicts the definition of closure in $D$ as the minimal d-closed subset of $D$ containing $BC$.
\end{remark}

\subsection{MS-measurable \texorpdfstring{$\omega$}{omega}-categorical Hrushovski constructions}\label{Hrusheqns}

In this section, we consider some equations which would hold in an $MS$-measurable $\omega$-categorical Hrushovski construction. Suppose $A\subseteq M_f$, and $B=\mathrm{cl}_{\mathcal{M}_f}(A)$. Let $\overline{a}\overline{b}$ be an enumeration of $B$ where $\overline{a}$ is an enumeration of $A$. By the extension property in Theorem \ref{Hrushconstrtheorem}, the quantifier-free type of $\overline{a}\overline{b}$ determines the type of of $\overline{a}$ in $\mathcal{M}_f$ \cite[Corollary 2.4]{Wagner:ST}. For $A\leq M_f$, we write $\theta_{\overline{a}}(\overline{x})$ for the formula isolating $\mathrm{tp}(\overline{a})$.

\begin{notation}\label{notameas} Note that by Fubini, any $MS$-dimension-measure $h$ is invariant under permutation of variables. That is, for $\overline{x}_\sigma$ a permutation of the variables of $\overline{x}$, and $\phi_\sigma(\overline{x})=\phi(\overline{x}_\sigma)$, 
\[h(\phi_\sigma(M^{\vert\overline{x}\vert}))=h(\phi(M^{\vert\overline{x}\vert})).\]
Hence, if $\mathcal{M}_f$ is $MS$-measurable, for $A\leq M_f$, we may write $\mu(A):=\mu(\theta_{\overline{a}}(\overline{x}))$ without ambiguity. For $A\in\mathcal{K}_f$ we write $\mu(A)$ for $\mu(\theta_{\overline{a}}(\overline{x}))$, where $\overline{a}$ is an enumeration of a copy $A'$ of $A$ in $M_f$ such that $A'\leq M_f$.
\end{notation}

By the notation $\mathrm{tp}(A/B)$ we mean $\mathrm{tp}(\overline{a}/B)$ where $\overline{a}$ is an enumeration of $A$.

\begin{theorem}\label{Hrush}
Let $\mathcal{M}_f$ be the $\omega$-categorical Hrushovski construction for the class $\mathcal{K}_f$. Suppose that $\mathcal{M}_f$ is MS-measurable. Let $A\leq B, C\leq M_f$ be finite. Let $\mathrm{tp}(C_1/B), \dots, \mathrm{tp}(C_n/B)$ be the finitely many non-forking extensions of $\mathrm{tp}(C/A)$ to $B$. Let $D_i$ be $\mathrm{cl}(BC_i)$. Then, we have that:
\begin{equation}\label{Heq}
    \frac{\mu(B)\mu(C)}{\mu(A)}=\sum_{i=1}^n \frac{\mu(D_i)}{\vert\mathrm{Aut}(D_i/BC_i)\vert},
\end{equation}
where $\vert\mathrm{Aut}(D_i/BC_i)\vert$ is the number of automorphisms of $D_i$ fixing $BC_i$ pointwise. 
\end{theorem}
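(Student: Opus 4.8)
The plan is to start from Corollary \ref{amacor} and carefully translate the equation (\ref{amacoreq}) into the language of Hrushovski constructions using Notation \ref{notameas}. First I would pick enumerations: let $\overline{a}$ enumerate $A$, and extend it to an enumeration $\overline{a}\overline{b}$ of $B$ (so that $\overline{b}$ enumerates $B\setminus A$) and to an enumeration $\overline{a}\overline{c}$ of $C$. Since $A\leq B,C\leq M_f$ are already $d$-closed, the formulas $\theta_{\overline{a}}, \theta_{\overline{a}\overline{b}}, \theta_{\overline{a}\overline{c}}$ isolate the types of $A$, $B$, $C$ respectively, and by Notation \ref{notameas} we have $\mu(\overline{a})=\mu(A)$, $\mu(\overline{a}\overline{b})=\mu(B)$, $\mu(\overline{a}\overline{c})=\mu(C)$. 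Applying Corollary \ref{amacor} with these tuples gives
\begin{equation*}
\frac{\mu(C)\,\mu(B)}{\mu(A)}=\sum_{j=1}^{m}\mu(\overline{a}\,\overline{b}\,\overline{c}_j),
\end{equation*}
where $\mathrm{tp}(\overline{c}_j/\overline{a}\overline{b})$ ranges over the complete non-forking extensions of $\mathrm{tp}(\overline{c}/\overline{a})$ to $\overline{a}\overline{b}=B$.

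The main work is then to reconcile the index set $\{1,\dots,m\}$ of non-forking extensions of the \emph{tuple-type} $\mathrm{tp}(\overline{c}/\overline{a})$ with the index set $\{1,\dots,n\}$ of non-forking extensions of $\mathrm{tp}(C/A)$ \emph{as a set}, which is what the statement is phrased in terms of. These need not coincide: the tuple $\overline{c}$ carries an enumeration, and different enumerations of the same copy of $C$ over $B$ give different tuple-types that nonetheless have the same underlying set. Concretely, for each $i$, the set-type $\mathrm{tp}(C_i/B)$ corresponds to a copy $BC_i\leq M_f$ with $D_i=\mathrm{cl}(BC_i)$; the number of tuple-types $\mathrm{tp}(\overline{c}_j/\overline{a}\overline{b})$ whose realizations enumerate a copy of $C$ lying inside a $B$-conjugate of $D_i$ over $B$ and realizing $\mathrm{tp}(C_i/B)$ equals the number of embeddings of $C$ into $D_i$ over... — more precisely, using $\omega$-categoricity, the number of such $j$ is the index $|\mathrm{Aut}(D_i/B)|/|\mathrm{Aut}(D_i/BC_i)|$ corrected appropriately; I will need to track this combinatorial factor carefully. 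The cleanest route is: group the $j$'s according to which set-type $\mathrm{tp}(C_i/B)$ the set underlying $\overline{c}_j$ realizes, note by Fubini/invariance that $\mu(\overline{a}\overline{b}\overline{c}_j)=\mu(\overline{a}\overline{b}\overline{d}_i)$ where $\overline{d}_i$ enumerates $D_i$ (since $\overline{a}\overline{b}\overline{c}_j$ and $D_i$ are interalgebraic, being $d$-closure and its generating set), hence $\mu(\overline{a}\overline{b}\overline{c}_j)=\mu(D_i)$, and then count: the number of $j$ mapping to a fixed $i$ is exactly $|\mathrm{Aut}(D_i/BC_i)|$ — because fixing the copy $D_i\leq M_f$ over $B$, the distinct tuple-types $\mathrm{tp}(\overline{c}_j/B)$ correspond to the distinct images of the ordered tuple $\overline{c}$ under automorphisms of $D_i$ fixing $B$, and two such images give the same $\overline{c}_j$-type over $B$ iff they differ by an automorphism of $D_i$ fixing $BC_i$ pointwise, while $D_i=\mathrm{cl}(BC_i)$ is determined by $BC_i$.

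I should double-check the direction of the counting: we want $\sum_j \mu(\overline{a}\overline{b}\overline{c}_j) = \sum_i (\text{number of }j\text{ over }i)\cdot \mu(D_i)$ to equal $\sum_i \mu(D_i)/|\mathrm{Aut}(D_i/BC_i)|$, so the "number of $j$ over $i$" must be $1/|\mathrm{Aut}(D_i/BC_i)|$ — which is fractional and wrong. So the correct bookkeeping must instead put the automorphism factor in $\mu(\overline{a}\overline{b}\overline{c}_j)$ versus $\mu(D_i)$: indeed $\mu(\overline{a}\overline{b}\overline{c}_j)$ is the measure of the type of an enumeration of $BC_i$ (not of $D_i$), and since $D_i=\mathrm{cl}(BC_i)$ with $D_i$ algebraic over $BC_i$, Fubini gives $\mu(\overline{a}\overline{b}\overline{c}_j)=\mu(BC_i)=\mu(D_i)\cdot\mu(\overline{d_i}/\overline{a}\overline{b}\overline{c}_j)^{-1}$... rather, since every finite algebraic type over $BC_i$ with $k$ realizations contributes a factor; the set $\theta_{BC_i}$ decomposes, over the parameters enumerating $BC_i$, and one gets $\mu(BC_i)=\mu(D_i)\cdot |\{\text{copies of }D_i\text{ over }BC_i\}|$. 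Here $|\{\text{copies of }D_i\supseteq BC_i\text{ with }D_i\leq M_f\}|$, counted with the type of $D_i$ over $BC_i$ fixed, is $|\mathrm{Aut}(D_i/BC_i)|$-to-one onto tuple-orbits — so $\mu(BC_i) = \mu(D_i)/|\mathrm{Aut}(D_i/BC_i)|$ once we account that each orbit of enumerations of $D_i$ over $BC_i$ has size $[\text{orbit}]$. The hard part, and the one I expect to be the main obstacle, is precisely nailing this combinatorial factor $|\mathrm{Aut}(D_i/BC_i)|$ unambiguously: I will argue it via Fubini applied to the projection "enumeration of $D_i$" $\mapsto$ "enumeration of $BC_i$", whose fibers have constant size $|\mathrm{Aut}(D_i/BC_i)|$ (each fiber being an $\mathrm{Aut}(D_i/BC_i)$-torsor on enumerations), giving $\mu(D_i)=\mu(BC_i)\cdot|\mathrm{Aut}(D_i/BC_i)|$, hence $\mu(\overline{a}\overline{b}\overline{c}_j)=\mu(BC_i)=\mu(D_i)/|\mathrm{Aut}(D_i/BC_i)|$, and the bijection between the $j$'s and the $i$'s (each $i$ hit exactly once, because the non-forking set-type $\mathrm{tp}(C_i/B)$ is determined by and determines the non-forking tuple-type $\mathrm{tp}(\overline{c}_j/B)$ once the enumeration of $C$ is fixed) completes the sum. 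Assembling these pieces yields (\ref{Heq}).
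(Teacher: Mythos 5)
Your final route is essentially the paper's proof: apply Corollary \ref{amacor} to fixed enumerations, then use Fubini on the algebraic projection from enumerations of $D_i=\mathrm{cl}(BC_i)$ to enumerations of $BC_i$ --- whose fibers have constant size $\vert\mathrm{Aut}(D_i/BC_i)\vert$ --- to get $\mu(\overline{a}\overline{b}\overline{c}_i)=\mu(D_i)/\vert\mathrm{Aut}(D_i/BC_i)\vert$. The tuple-type versus set-type worry you spend most of your effort on dissolves under the paper's stated convention that $\mathrm{tp}(C_i/B)$ already denotes the type of an enumeration, so the two index sets coincide, and the bookkeeping you eventually settle on (after discarding the fractional-count misstep) is exactly the one in the paper.
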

\begin{proof} Let $\overline{a}\overline{b}$ be an enumeration of $B$ where $\overline{a}$ is an enumeration of $A$. Similarly, let $\overline{a}\overline{c}$ be an enumeration of $C$. Let $\overline{a}\overline{b}\overline{c}_i\overline{d}_i$ be an enumeration of $D_i$ where, $\overline{a}\overline{b}\overline{c}_i$ is the corresponding enumeration of $ABC_i$. By Corollary \ref{amacor}, 
\begin{equation}
    \frac{\mu(B)\mu(C)}{\mu(A)}=\sum_{i=1}^n \mu(\mathrm{tp}(\overline{a}\overline{b}\overline{c}_i))
\end{equation}
We know that $\mathrm{tp}(\overline{a}\overline{b}\overline{c}_i)$ is isolated by $\exists\overline{w}\theta_{\overline{a}\overline{b}\overline{c}_i\overline{d}_i}(\overline{x}\overline{y}\overline{z}\overline{w})$. Note that $\vert\{e\in M_f \vert \theta_{\overline{a}\overline{b}\overline{c}_i\overline{d}_i}(\overline{a}\overline{b}\overline{c_i}\overline{e})\}\vert$ is finite and counts the number of automorphisms of $D_i$ fixing $BC_i$ pointwise. Hence, by Fubini,
\[\mu(\mathrm{tp}(\overline{a}\overline{b}\overline{c}_i)) \vert\mathrm{Aut}(D_i/BC_i)\vert=\mu(D_i).\]
This yields the desired equation.
\end{proof}

Following Lemma \ref{indepfree}, we can express the conditions for a graph $D_i$ to appear as $\mathrm{cl}(BC_i)$ in the theorem above in terms of $\mathcal{K}_f$.

\begin{definition} Let $A_0\leq A_1, A_2$, $A_1\amalg_{A_0} A_2\subseteq D\in \mathcal{K}_f$. We say $D$ is an \textbf{eventual closure} of $A_1\amalg_{A_0} A_2$ if $A_1$ and $A_2$ are freely amalgamated over $A_0$ in $D$, $A_1\cup A_2\leq^*D$, $\delta(A_1\cup A_2)=\delta(D)$, and $A_i\leq D$ for each $i\in\{0,1,2\}$.
\end{definition}

\begin{remark} Consider the closures $D_i$ in Theorem \ref{Hrush}. We have that there is $C_i\leq M_f$ independent from $B$ over $A$ with $\mathrm{tp}(C_i/A)=\mathrm{tp}(C/A)$  and closure $D_i$ if and only if there is $D_i'\in \mathcal{K}_f$, isomorphic to $D_i$ as a graph, which is an eventual closure of $B\amalg_A C$. The left-to-right implication follows by Lemma \ref{indepfree} and Remark \ref{moreind}. The right-to-left implication follows by the extension property, noting that $B\leq D_i'\in\mathcal{K}_f$.
\end{remark}
 Hence, we may rephrase Theorem \ref{Hrush} as follows:

\begin{corollary}\label{Hrushcor} Let $\mathcal{M}_f$ be an $\omega$-categorical Hrushovski construction with amalgamation class $\mathcal{K}_f$. Suppose $\mathcal{M}_f$ is $MS$-measurable. Let $A\leq B, C\in\mathcal{K}_f$. Let $D_1, \dots D_n$ be the eventual closures of $B\amalg_A C$. Then, 
\begin{equation*}
    \frac{\mu(B)\mu(C)}{\mu(A)}=\sum_{i=1}^n \frac{\mu(D_i)}{\vert\mathrm{Aut}(D_i/BC)\vert}.
\end{equation*}
\end{corollary}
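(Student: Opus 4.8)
The plan is to deduce Corollary~\ref{Hrushcor} directly from Theorem~\ref{Hrush} by translating between the model-theoretic data appearing in the latter and the combinatorial data ($\mathcal{K}_f$, free amalgamation, eventual closures) appearing in the former, using the dictionary provided by Lemma~\ref{indepfree} and Remark~\ref{moreind} together with the extension property of Theorem~\ref{Hrushconstrtheorem}. First I would fix an embedding of the configuration $A\leq B, C\in\mathcal{K}_f$ into $\mathcal{M}_f$: by the substructure property there is a copy $B'\leq M_f$ of $B$, and by the extension property I may realize $C$ over $A$ inside $\mathcal{M}_f$; abusing notation, I simply write $A\leq B,C\leq M_f$ as in Theorem~\ref{Hrush}. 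Then I would invoke Theorem~\ref{Hrush} verbatim to get
\[
\frac{\mu(B)\mu(C)}{\mu(A)}=\sum_{i=1}^n \frac{\mu(D_i)}{\vert\mathrm{Aut}(D_i/BC_i)\vert},
\]
where $\mathrm{tp}(C_1/B),\dots,\mathrm{tp}(C_n/B)$ enumerate the non-forking extensions of $\mathrm{tp}(C/A)$ to $B$ and $D_i=\mathrm{cl}(BC_i)$.

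The core of the argument is then the claim, already stated in the Remark preceding the corollary, that the multiset $\{D_i : 1\le i\le n\}$ (up to graph isomorphism) coincides with the multiset of eventual closures of $B\amalg_A C$. For the forward direction: since $\mathcal{M}_f$ is supersimple and $\mathrm{tp}(C_i/B)$ does not fork over $A$, by the identification of non-forking independence with Hrushovski-dimension independence we have $C_i \mathop{\text{ind}} B$ over $A$ in $\mathcal{M}_f$, so Lemma~\ref{indepfree} gives that $B$ and $C_i$ are freely amalgamated over $A$ inside $\mathrm{cl}(BC_i)=D_i$ with $\delta(B\cup C_i)=\delta(D_i)$, and Remark~\ref{moreind} gives $BC_i\le^* D_i$; moreover $A\le D_i$, $B\le D_i$ since closures of $d$-closed sets are $d$-closed and $B\le M_f$, and $C_i\le D_i$ likewise. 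Since $C_i$ is a copy of $C$ over $A$, the resulting graph $D_i$ is an eventual closure of $B\amalg_A C$. Conversely, given an eventual closure $D'$ of $B\amalg_A C$ in $\mathcal{K}_f$, the defining conditions say $B\le D'$ and $D'\in\mathcal{K}_f$, so by the extension property I may embed $D'$ over $B$ into $\mathcal{M}_f$ with $D'\le M_f$; writing $C'$ for the image of the copy of $C$, Lemma~\ref{indepfree} shows $C' \mathop{\text{ind}} B$ over $A$, and the conditions $C'\le D'$, $\delta(B\cup C')=\delta(D')$, $BC'\le^* D'$ force $\mathrm{cl}(BC')=D'$, so $\mathrm{tp}(C'/B)$ is one of the $\mathrm{tp}(C_i/B)$ with $D_i\cong D'$.

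Two points of care, which I expect to be the main (though modest) obstacle, concern matching the \emph{denominators} and the \emph{multiplicities}. For the denominators: in Theorem~\ref{Hrush} the term is $\vert\mathrm{Aut}(D_i/BC_i)\vert$ while in the corollary it is $\vert\mathrm{Aut}(D_i/BC)\vert$; since the isomorphism $D_i\cong D_i'$ carries $BC_i$ to a copy of $BC$ setwise, and automorphisms of $D_i$ over $BC_i$ correspond bijectively to automorphisms of $D_i'$ over that copy of $BC$, these counts agree, so the substitution is legitimate (and $\mu(D_i)$ depends only on the isomorphism type by Notation~\ref{notameas}). For the multiplicities: I must check that distinct $i$ giving isomorphic $D_i$ are genuinely counted separately on both sides — on the model-theoretic side the $\mathrm{tp}(C_i/B)$ are distinct types over $B$ even if the graphs $D_i$ happen to be abstractly isomorphic, and the bijection above between types-over-$B$ with closure graph-isomorphic to a fixed $D'$ and embeddings $D'\hookrightarrow M_f$ over $B$ (with $D'\le M_f$) modulo $\mathrm{Aut}(D'/B)$ shows that the eventual-closure side, if one enumerates eventual closures with their correct multiplicity, matches; in practice one simply reads the sum in the corollary as ranging over the same index set $\{1,\dots,n\}$ as in Theorem~\ref{Hrush}, with $D_i$ replaced by its isomorphism type, so no genuine recount is needed. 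Assembling these identifications, the right-hand side of the Theorem~\ref{Hrush} equation is term-by-term equal to $\sum_{i=1}^n \mu(D_i)/\vert\mathrm{Aut}(D_i/BC)\vert$ with $D_1,\dots,D_n$ the eventual closures of $B\amalg_A C$, which is the assertion of Corollary~\ref{Hrushcor}.
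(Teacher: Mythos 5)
Your proposal is correct and follows essentially the same route as the paper: the paper derives the corollary from Theorem \ref{Hrush} via the remark identifying the closures $D_i=\mathrm{cl}(BC_i)$ with the eventual closures of $B\amalg_A C$, using Lemma \ref{indepfree} and Remark \ref{moreind} for one direction and the extension property for the other. Your additional care about matching $\vert\mathrm{Aut}(D_i/BC_i)\vert$ with $\vert\mathrm{Aut}(D_i/BC)\vert$ and about multiplicities only makes explicit what the paper leaves implicit.
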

\section{A non-measurability proof}\label{finally}

In this section, we give a class of $\omega$-categorical Hrushovski constructions. These are supersimple of finite $SU$-rank, as proven in the Appendix \ref{appendix}. However, we will prove in Subsection \ref{proof!} that these are not $MS$-measurable. In Subsection \ref{evcl}, we introduce these structures and show how we have control on small enough eventual closures in them. 
\subsection{The structure \texorpdfstring{$\mathcal{M}_f$}{Mf} and small eventual closures}\label{evcl}

We begin by giving the class of structures that we work with.

\begin{construction}\label{actualf} We set $\alpha=2$. Let $f:\mathbb{R}^{>0}\to \mathbb{R}^{>0}$ be such that for $t\geq 6$, $f$ is a good function with $f(3\cdot t)\leq f(t)+1$ and for $t\leq 6$, $f$ is piece-wise linear with $f(1)=2, f(4)=5, f(6)=6$. An illustration of the initial values of $f$ is given by Figure \ref{fig:newf}. As proven in the Appendix \ref{appendix}, the class $\mathcal{K}_f$ has the free amalgamation property, and so by Theorem \ref{Hrushconstrtheorem} we can build an $\omega$-categorical Hrushovski construction $\mathcal{M}_f$. 
\end{construction}

An example of a function satisfying the conditions of Construction \ref{actualf} is the function taking the values specified for $t\leq 6$ and corresponding to $\log_3(x)-\log_3(6)+6$ for $t>6$. Of course, there are uncountably many functions satisfying the conditions of our construction.

From now on, by $\mathcal{M}_f$ we mean an $\omega$-categorical Hrushovski construction obtained following Construction \ref{actualf}.
\begin{figure}
    \centering
 \includegraphics[width=\textwidth]{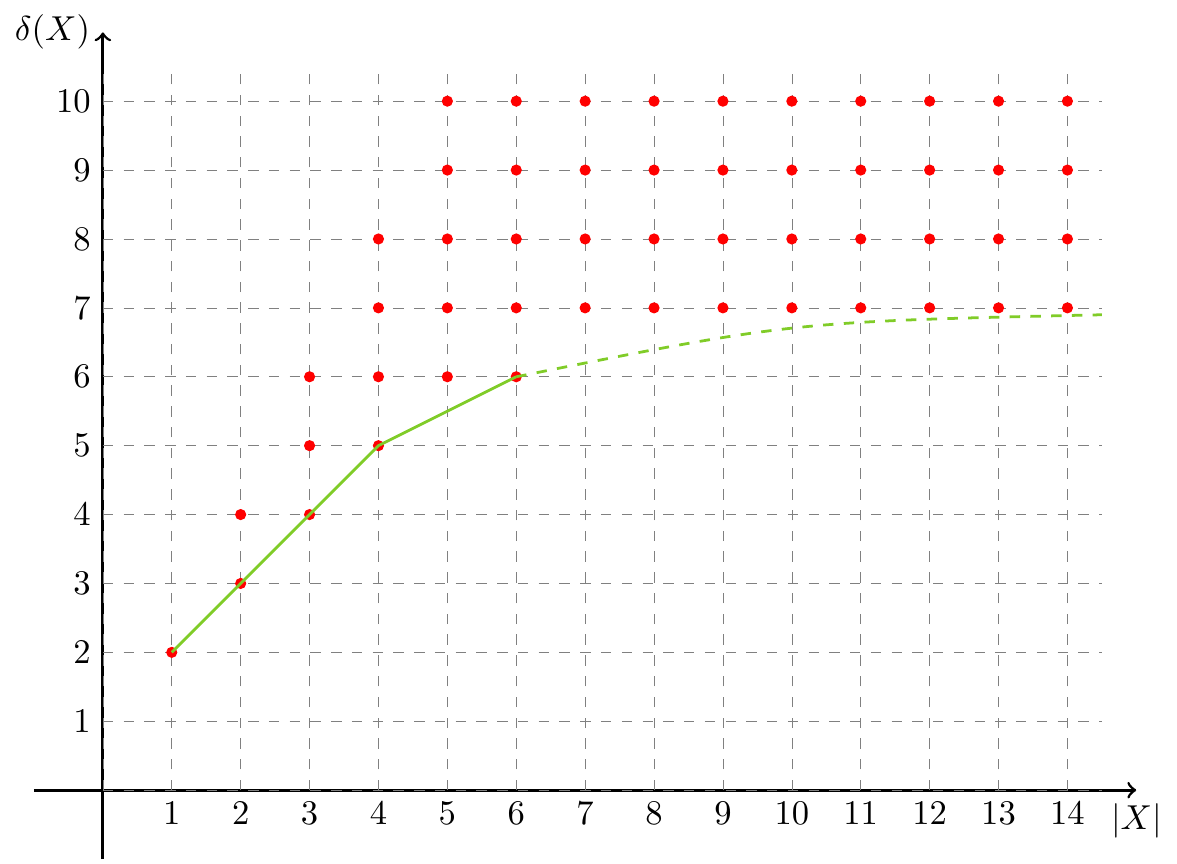}
\caption{The initial values of $f$ are represented by the line in the figure. The dots represent positions $(\vert X\vert , \delta(X))$ of the graphs $X$ in $\mathcal{K}_f$ (some of the dots for $\vert X\vert \geq 5$ lie outside of the figure). Note that $f(1)=2$ and $f(2)=3$ are necessary to include vertices and edges in $\mathcal{K}_f$. $f(3)=4, f(4)=5, f(6)=6$ imply that the smallest cycle in $\mathcal{K}_f$ is a $6$-cycle. We also have that vertices and edges are always closed in $\mathcal{M}_f$.}
\label{fig:newf}
\end{figure}

\begin{lemma}[Basic properties of $\mathcal{M}_f$] The $\omega$-categorical Hrushovski construction $\mathcal{M}_f$ is supersimple of $SU$-rank $2$. Furthermore, it has weak elimination of imaginaries. Finally, we may choose $f$ with growth slow enough that $\mathcal{M}_f$ satisfies independent $n$ amalgamation over finite algebraically closed sets for any $n\geq 3$ or even for all $n\in\mathbb{N}$.
\end{lemma}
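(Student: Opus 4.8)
The plan is to verify each of the four assertions in turn, using the results already assembled in the excerpt plus the Appendix material referenced in Construction \ref{actualf}.

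\textbf{Supersimplicity and $SU$-rank $2$.} First I would note that $\mathcal{K}_f$ has the free amalgamation property (proved in the Appendix via Lemma \ref{showfreeam}, since $f$ is a good function for $t\geq 6$ and the finitely many small amalgams are checked directly from the piece-wise-linear data $f(1)=2$, $f(4)=5$, $f(6)=6$), so $\mathcal{M}_f$ exists and is $\omega$-categorical by Theorem \ref{Hrushconstrtheorem}. Supersimplicity of finite $SU$-rank then follows from \cite[Theorem 3.6]{supersimple} (originally \cite{Udiamalg}): since $\alpha=2\in\mathbb{Q}$ and $f$ has slow enough growth (the condition $f(3t)\leq f(t)+1$ for $t\geq 6$ is exactly the kind of logarithmic growth bound that guarantees this), $\mathcal{M}_f$ is supersimple and the Hrushovski dimension $\mathrm{d}$ is a re-scaled $SU$-rank. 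To pin down the value $2$, I would compute $\mathrm{d}(M_f)=\sup\{\delta(A): A\leq M_f\}$. Since $\delta(A)=2|A|-|E(A)|$ and membership in $\mathcal{K}_f$ forces $\delta(A')\geq f(|A'|)$ for all $A'\subseteq A$, a single vertex has $\delta=2$, and one checks that no closed configuration exceeds predimension $2$ (adding an edge costs $1$; the first cycle available is a $6$-cycle with $\delta=2\cdot 6-6=6$, but its closure pushes things down, etc.); after the normalising rescale that sends the maximal attainable predimension of a point-closure to the rank of a single element, $SU(\mathcal{M}_f)=2$. This bookkeeping is routine but must be done carefully against Figure \ref{fig:newf}.

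\textbf{Weak elimination of imaginaries.} This is a standard feature of Hrushovski constructions with free amalgamation: for such $\mathcal{M}_f$ one shows every imaginary is interdefinable with a real finite tuple, equivalently that $\mathrm{acl}^{eq}(e)\cap M_f$ determines $e$ up to a finite set. I would invoke the general argument (as in \cite{Wagner:ST} / \cite{Evans}) that uses the stationarity of types over d-closed sets together with the fact that $\mathrm{cl}_{M_f}$ is exactly algebraic closure; free amalgamation gives the required amalgamation of types over algebraically closed sets, and weak elimination of imaginaries follows. (Full elimination typically fails because of the automorphisms of closures of the kind appearing as $|\mathrm{Aut}(D_i/BC_i)|$ in Theorem \ref{Hrush}, which is why only the \emph{weak} version is claimed.)

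\textbf{Independent $n$-amalgamation for slow $f$.} For the last sentence I would appeal to \cite{Udiamalg} (cf.\ the discussion after Lemma \ref{showfreeam} and in \cite{supersimple}): if $f$ grows slowly enough — and one is free to choose $f$ subject only to the constraints of Construction \ref{actualf}, which bound $f$ \emph{from above} and so are compatible with making $f$ as slow as desired — then $\mathcal{M}_f$ satisfies independent $n$-amalgamation over finite algebraically closed (= d-closed) sets for every fixed $n\geq 3$, and by a diagonal choice of $f$ one can arrange this simultaneously for all $n\in\mathbb{N}$. The point is that the amalgamation constraint $\delta(A')\geq f(|A'|)$ becomes weaker as $f$ shrinks, so the free amalgam of the configurations witnessing $n$-amalgamation stays in $\mathcal{K}_f$.

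\textbf{Main obstacle.} The genuinely content-bearing step is the second assertion's rank computation together with confirming that the specific $f$ of Construction \ref{actualf} meets the hypotheses of \cite[Theorem 3.6]{supersimple}; everything else is an application of cited machinery. I would flag that the detailed verification of the free amalgamation property for $\mathcal{K}_f$, the supersimplicity, and the slow-growth/$n$-amalgamation claims are carried out in the Appendix, and here simply record the consequences.
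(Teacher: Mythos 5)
Your outline correctly identifies that the lemma is proved by assembling cited machinery, and the parts on weak elimination of imaginaries (the paper verifies conditions (P1)--(P5) of \cite{supersimple} and applies its Lemma 2.9 and Corollary 2.7) and on independent $n$-amalgamation (the constraints of Construction \ref{actualf} leave $f$ free to be arbitrarily slow for $t>6$, so one invokes \cite{Udiamalg}) match the paper. But there is a genuine gap in how you get supersimplicity. Theorem 3.6 of \cite{supersimple} does not say ``slow growth of $f$ implies supersimplicity''; its hypothesis is that $\mathcal{K}_f$ is \emph{closed under independence theorem diagrams} (ITDs), and verifying this for the specific $f$ of Construction \ref{actualf} is the real work of the Appendix. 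The growth condition $f(3t)\leq f(t)+1$ only applies for $t\geq 6$ and, via Lemma \ref{SITDcond}, only disposes of proper ITDs whose largest pairwise amalgam $D_{12}$ has predimension $d_{12}\geq 6$; the proper ITDs with $d_{12}\leq 5$ must be enumerated and checked by hand against the initial values $f(1)=2$, $f(4)=5$, $f(6)=6$ (this is where the $6$-cycle appears as the minimal proper ITD). Your proposal never mentions ITDs, so the content-bearing finite verification is absent rather than deferred: the phrase ``the condition $f(3t)\leq f(t)+1$ \ldots{} is exactly the kind of logarithmic growth bound that guarantees this'' asserts something that is false as stated, since for a badly chosen initial segment of $f$ the same asymptotic bound would not yield closure under ITDs.

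The $SU$-rank computation is also wrong as written. The quantity $\sup\{\delta(A):A\leq M_f\}$ is infinite (take many independent vertices), and ``no closed configuration exceeds predimension $2$'' is false for the same reason. What is actually needed is that $SU$-rank coincides with the Hrushovski dimension $d$ (the paper cites \cite[Corollary 2.21]{supersimple} for the identification of non-forking with $d$-independence), so that the rank of the structure is $d(a)=\delta(\mathrm{cl}(a))$ for a single vertex $a$; since $f(1)=2=\alpha$ forces singletons to be d-closed, this equals $\alpha=2$. No ``normalising rescale'' is involved here because $\alpha=2$ is already an integer.
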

\begin{proof} See Appendix \ref{appendix}.\end{proof}

As we shall see, the choice of values of $f(t)$ for $t\leq6$ ensures that the eventual closures of graphs of small enough dimension are easy to enumerate. 

\begin{definition} Let $A\in\mathcal{K}_f$. Suppose $A\leq^* A'\in\mathcal{K}_f, \delta(A')=\delta(A)$, and $\vert A'\setminus A\vert =1$. Then, we call $A'$ a \textbf{one point extension} of $A$.
\end{definition}

Since $\alpha=2$, a one-point extension $A'$ of $A$ will be obtained by adding to $A$ a single vertex $v$ joined by two edges to $A$.

\begin{lemma}\label{iterate1ext} Let $A, B\in\mathcal{K}_f$, $A\leq^*B, \delta(A)=\delta(B),$ and $\vert B\setminus A\vert <6$. Then, $B$ is obtained from $A$ by iterating one-point extensions.
\end{lemma}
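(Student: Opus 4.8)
The plan is to induct on $\vert B\setminus A\vert$, building up a chain $A = A_0 \leq^* A_1 \leq^* \dots \leq^* A_k = B$ of one-point extensions. The base case $\vert B\setminus A\vert = 0$ is trivial. For the inductive step, with $1 \leq k = \vert B\setminus A\vert < 6$, I would aim to find a single vertex $v \in B\setminus A$ such that $B\setminus\{v\}$ is a one-point extension of $A$ is the wrong direction; rather I want $A \cup \{v\}$ to embed in a self-sufficient way and $B$ to be obtained from $A\cup\{v\}$ by iterating one-point extensions. So the real target is: there is $v\in B\setminus A$ joined to $A\cup(B\setminus A\setminus\{v\})$ by exactly two edges, i.e. $\delta((B\setminus\{v\}) ) = \delta(B)$ with $(B\setminus\{v\})\leq^* B$, and moreover $A\leq^* B\setminus\{v\}$. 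Then apply induction to the pair $A\leq^* B\setminus\{v\}$ (which has one fewer new vertex) and append the final one-point extension.

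The key computation is the following counting argument. Write $C = B\setminus A$, so $1\leq\vert C\vert < 6$. Since $A\leq^* B$ and $\delta(A) = \delta(B)$, submodularity/self-sufficiency forces $\delta(A') \geq \delta(A)$ for every $A\subseteq A'\subseteq B$, with equality at $A' = B$. Now $\delta(B) = \delta(A) + 2\vert C\vert - e$, where $e$ is the number of edges of $B$ meeting $C$ (i.e. edges within $C$ plus edges between $C$ and $A$); setting $\delta(B)=\delta(A)$ gives $e = 2\vert C\vert$. I want to find $v\in C$ with exactly two edges to $A\cup(C\setminus\{v\})$. Suppose not: every $v\in C$ has at least three such edges (it cannot have fewer than two, else removing $v$ would give a subset of $B$ containing $A$ of predimension $>\delta(B)=\delta(A)$, contradicting $A\leq^* B$ — wait, fewer edges makes predimension larger, which is allowed; the genuine constraint is that $A\leq^* B$ forbids predimension strictly below $\delta(A)$). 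The correct dichotomy: if some $v$ has $\leq 1$ edge to the rest, then $\delta(B\setminus\{v\}) = \delta(B) - 2 + (\text{edges at }v) \geq \delta(B)+1 > \delta(A)$... I must instead sum: $\sum_{v\in C}(\text{edges from }v\text{ to }A\cup(C\setminus\{v\})) = e + (\text{edges inside }C) = 2\vert C\vert + \vert E(C)\vert \geq 2\vert C\vert + \vert E(C)\vert$. If every vertex of $C$ had $\geq 3$ incident edges (within $C$ or to $A$), this sum is $\geq 3\vert C\vert$, forcing $\vert E(C)\vert \geq \vert C\vert$, so the subgraph on $C$ contains a cycle; but the shortest cycle in $\mathcal{K}_f$ has length $6$ by the choice $f(3)=4, f(4)=5, f(6)=6$ (as noted in Figure~\ref{fig:newf}), and $\vert C\vert < 6$, contradiction. (More carefully, $\delta(C) = 2\vert C\vert - \vert E(C)\vert \geq f(\vert C\vert)$ and with $\vert C\vert \leq 5$ one checks $f$ forces $\vert E(C)\vert < \vert C\vert$.) Hence some $v\in C$ has exactly two incident edges in $B$ going to $A\cup(C\setminus\{v\})$ — it cannot be fewer, as then $A\leq^* B$ would be contradicted by $\delta(B\setminus\{v\})<\delta(B)$ combined with maximality as in Remark~\ref{moreind}; actually fewer-than-two gives $\delta(B\setminus\{v\}) = \delta(B) - 2 + k'$ with $k'\leq 1$, i.e. $\geq \delta(B)$, so I also need $\delta(B\setminus\{v\})\leq\delta(B)$ which follows from... hmm.

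Let me reststructure that last point cleanly: pick $v\in C$ minimizing the number $k_v$ of edges of $B$ incident to $v$ and going to $B\setminus\{v\}$. By the cycle bound just established, $k_v \leq 2$ for at least one choice (average over $C$ of $k_v$ is $\leq (2\vert C\vert + 2\vert E(C)\vert)/\vert C\vert < 4$ when $\vert E(C)\vert < \vert C\vert$, and one refines to get a vertex with $k_v\leq 2$). If $k_v \leq 1$, then removing $v$ would leave $A\subseteq B\setminus\{v\}\subseteq B$ with $\delta(B\setminus\{v\}) = \delta(B) - 2 + k_v \leq \delta(B) - 1 < \delta(A)$, contradicting $A\leq^* B$. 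So $k_v = 2$ exactly, $(B\setminus\{v\})\leq^* B$ (self-sufficiency is inherited since any subset witnessing a drop would also witness one in $B$), $\delta(B\setminus\{v\}) = \delta(B) = \delta(A)$, and $A\leq^* B\setminus\{v\}$ (again inherited). Apply the induction hypothesis to $A\leq^* B\setminus\{v\}$ to get a chain of one-point extensions, then append $B\setminus\{v\}\leq^* B$.

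The main obstacle is making the "shortest cycle has length $6$" input do exactly the work needed: I must verify that for $1\leq\vert C\vert \leq 5$, membership $C\in\mathcal{K}_f$ (which holds since $C\subseteq B\in\mathcal{K}_f$) together with the piecewise-linear values $f(1)=2, f(4)=5, f(6)=6$ forces $\vert E(C)\vert \leq \vert C\vert - 1$, i.e. $C$ is a forest, and then that this forces some vertex of $C$ with $k_v\leq 2$. Both are small finite checks once the values of $f$ on $\{1,2,3,4,5\}$ are pinned down, but they are where the specific numerology of Construction~\ref{actualf} is essential, so I would present that computation explicitly rather than waving at it.
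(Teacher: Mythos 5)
Your final, restructured argument is correct and is essentially the paper's proof run in the opposite direction: the paper combines the identity $\vert E(A;X)\vert+\vert E(X)\vert=2\vert X\vert$ (from $\delta(B)=\delta(A)$) with $\vert E(X)\vert<\vert X\vert$ (from $f(t)>t$ for $t\leq 5$, exactly the finite check you flag at the end) to pigeonhole a vertex of $X=B\setminus A$ with exactly two edges into $A$ and grows $A$ upward, whereas you use the same two facts to pigeonhole a vertex with exactly two edges into the rest of $B$ and peel $B$ downward, with $A\leq^* B$ pinning the count at exactly two in both versions. The key lemma, the counting, and the role of the numerology of Construction \ref{actualf} are the same, so this is the same approach up to reversing the order in which the chain of one-point extensions is produced.
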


\begin{proof}
Note that with our choice of $f$ for $X\in \mathcal{K}_f$ such that $\vert X\vert <6$, $\vert E(X)\vert <\vert X\vert $. In fact, for $\vert X\vert <6$, $f(X)>\vert X\vert $. Since $X\in\mathcal{K}_f$, 
\begin{align}
    & 2\vert X\vert -\vert E(X)\vert =\delta(X)\geq f(\vert X\vert )>\vert X\vert  \\
   \Rightarrow & \vert E(X)\vert <\vert X\vert . \label{e<x}
\end{align}
Now, consider the graph $X$ induced by the vertices in $B\setminus A$. Since $\vert X\vert <6$, $\vert E(X)\vert <\vert X\vert $. Let $E(A;X):=\{\{u, v\}\in E(B) \vert  u\in A, v\in X\}$. By $\delta(B)=\delta(A)$, we have that:
\[\delta(B)=2\vert B\vert -E(B)=2\vert A\vert +2\vert X\vert -\vert E(A)\vert -\vert E(X)\vert -\vert E(A;X)\vert =2\vert A\vert -\vert E(A)\vert =\delta(A),\]
and so since $\vert E(X)\vert <\vert X\vert $, 
\[0=2\vert X\vert -\vert E(X)\vert -\vert E(A;X)\vert >\vert X\vert -\vert E(A;X)\vert ,\]
which yields $\vert X\vert <\vert E(A;X)\vert $. By the pigeonhole principle there must be a vertex $v$ of $X$ connected to $A$ by two or more points. Note that if $v$ were connected by three or more points $\delta(A\cup\{v\})<\delta(A)$, and so $A\not\leq^*B$. Hence, $A\cup\{v\}$ must be a one-point extension of $A$.
The same argument can be iterated for $A\cup\{v\}$ so that we see $B$ is constructed by iterating one-point extensions.
\end{proof}

\begin{remark}\label{1ptevc}
Let $D$ be a proper eventual closure of $A_1\amalg_{A_0}A_2$ with $\vert D\setminus A_1\cup A_2\vert <6$. Then, as noted in Lemma \ref{iterate1ext}, $D$ is obtained by iterating one-point extensions $D_1, \dots, D_k$ for $k\leq 5$. One can see that each $D_i$ is also an eventual closure.

We shall call an eventual closure of $A_1\amalg_{A_0}A_2$ which is also a one-point extension a \textbf{one point closure}.
\end{remark}

Given Lemma \ref{iterate1ext} and Corollary \ref{Hrushcor}, we get:

\begin{corollary}\label{noextcor} Let $A\leq\mathcal{M}_f$, and $A\leq B, C\in\mathcal{K}_f$. Let $N=\lfloor f^{-1}(\delta(B\amalg_A C))\rfloor$ so that $N$ is the largest size of a graph of predimension $\delta(B\amalg_A C)$. Suppose that $N-\vert B\amalg_A C\vert <6$ and that the maximal distance between a point in $B\setminus A$ and a point in $C\setminus A$ in $B\amalg_A C$ is $\leq 3$. Then, there is no proper eventual closure for $B\amalg_A C$, and so, 
\[\mu(B\amalg_A C)=\frac{\mu(B)\mu(C)}{\mu(A)}.\]
\end{corollary}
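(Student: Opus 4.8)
The plan is to apply Corollary \ref{Hrushcor}, which says
\[
\frac{\mu(B)\mu(C)}{\mu(A)}=\sum_{i=1}^n \frac{\mu(D_i)}{\vert\mathrm{Aut}(D_i/BC)\vert},
\]
where $D_1,\dots,D_n$ are the eventual closures of $B\amalg_A C$. The graph $B\amalg_A C$ is itself always an eventual closure of $B\amalg_A C$ (free amalgamation holds by construction, $A_i\leq D$ for $i=0,1,2$ by hypothesis, and $\delta$ is unchanged), and its automorphism group over $BC=B\cup C$ is trivial since it fixes every vertex. So $B\amalg_A C$ always contributes the term $\mu(B\amalg_A C)$ to the right-hand side. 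Thus it suffices to show there is no \emph{proper} eventual closure of $B\amalg_A C$, i.e.\ $n=1$; then the formula collapses to $\mu(B)\mu(C)/\mu(A)=\mu(B\amalg_A C)$, which is exactly the claim.

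Suppose towards a contradiction that $D$ is a proper eventual closure of $B\amalg_A C$, so $B\cup C\subsetneq D$, $\delta(D)=\delta(B\amalg_A C)$, and $B\cup C\leq^* D$. Since $\delta(D)=\delta(B\amalg_A C)$ and $N=\lfloor f^{-1}(\delta(B\amalg_A C))\rfloor$ is the largest possible size of a graph of that predimension in $\mathcal{K}_f$ (using that $f$ is increasing and $D\in\mathcal{K}_f$ forces $\delta(D)\ge f(\vert D\vert)$), we get $\vert D\vert\le N$, hence $\vert D\setminus(B\cup C)\vert\le N-\vert B\amalg_A C\vert<6$. By Remark \ref{1ptevc} (equivalently Lemma \ref{iterate1ext} applied to $B\cup C\leq^* D$ with $\vert D\setminus(B\cup C)\vert<6$), $D$ is built from $B\amalg_A C$ by iterating one-point extensions, and in particular there is at least one one-point closure: a vertex $v\in D\setminus(B\cup C)$ joined by exactly two edges to $B\cup C$ with $\delta((B\cup C)\cup\{v\})=\delta(B\amalg_A C)$ and $(B\cup C)\cup\{v\}\leq^* D$.

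The main obstacle — and the crux of the argument — is to derive a contradiction from the existence of such a $v$, and this is where the hypothesis on distances and the specific values $f(1)=2,f(4)=5,f(6)=6$ enter. The vertex $v$ is joined to $B\cup C$ by two edges; these two neighbours cannot both lie in $B$ (respectively both in $C$), for otherwise adding $v$ would only involve vertices of $B$ (resp.\ $C$), contradicting that $B\leq D$ (resp.\ $C\leq D$), since $\delta(B\cup\{v\})=\delta(B)$ would make $B$ non-d-closed in $D$ — here I use $B\leq D$ from the definition of eventual closure. Hence $v$ has one neighbour $b\in B\setminus A$ and one neighbour $c\in C\setminus A$. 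But then $b$ and $c$ are at distance $2$ in $D$ via $v$; I would now argue, using $(B\cup C)\cup\{v\}\leq^* D$ and the distance hypothesis (maximal $B$-to-$C$ distance $\le 3$ in $B\amalg_A C$), that this creates a cycle in $(B\cup C)\cup\{v\}$ of length at most, say, $5$ — a path of length $\le 3$ from $b$ to $c$ through $B\cup C$ together with the length-$2$ path $b,v,c$. A cycle of length $\le 5$ has predimension $2\ell-\ell=\ell\le 5<f(\ell)$ for $\ell\le 5$ (since $f(\ell)>\ell$ for $\ell<6$, which forces the smallest cycle in $\mathcal{K}_f$ to have length $6$, as noted in Figure \ref{fig:newf}), so it is not in $\mathcal{K}_f$, contradicting $D\in\mathcal{K}_f$. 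I would need to be careful that the short $b$–$c$ path and the path $b,v,c$ genuinely combine into a short cycle (they share only the endpoints $b,c$ since $v\notin B\cup C$), and to handle the bookkeeping of how edges at $A$ are counted; once that is pinned down, the contradiction is complete, giving $n=1$ and the displayed identity.
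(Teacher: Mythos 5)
Your proposal is correct and follows essentially the same route as the paper: reduce to ruling out a one-point closure via the size bound and Lemma \ref{iterate1ext}, observe that the new vertex $v$ must have one neighbour in $B\setminus A$ and one in $C\setminus A$ because $B,C\leq D$, and then use the distance-$\leq 3$ hypothesis to produce a cycle of length $<6$, which cannot lie in $\mathcal{K}_f$. The only difference is that you spell out the reduction through Corollary \ref{Hrushcor} and the cycle bookkeeping more explicitly than the paper does; the substance is identical.
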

\begin{proof} Since no graph in $\mathcal{K}_f$ contains cycles of length $<6$, $B\amalg_A C$ has no proper eventual closures. In fact, in a one point closure of $B\amalg_A C$ by a point $v$, $v$ must be attached to a vertex in $B\setminus A$ and a vertex in $C\setminus A$ since $B$ and $C$ are closed in the eventual closure. But then, this one-point closure would contain a $k$-cycle for $k<6$. 
\end{proof}

\subsection{Proving non-measurability}\label{proof!}

We are ready to prove that the structures built in Construction \ref{actualf} are not $MS$-measurable. We first find some equations that an $MS$-measure should satisfy and then note that these equations imply that some definable set has measure zero, breaking the positivity condition of $MS$-measures.

\begin{notation}
We still follow the notation of \ref{notameas}. However, for clarity, we represent graphs pictorially. For example, for $A\in\mathcal{K}_f$ a path of length two, instead of writing $\mu(A)$, we may write $\mu\left(\ptwo\right)$, so that it is clear from our notation which graph we are talking about. Note that when we speak of "the measure of $A$", we actually mean "the measure of the definable subset of $M_f^{\vert A\vert }$ consisting of copies of $A$ which are such that $A\leq M_f$". By our choice of $f$, vertices and edges isolate complete types in $\mathcal{M}_f$.
\end{notation}

\begin{prop}\label{manyeqns} Suppose that $\mathcal{M}_f$ is $MS$-measurable with dimension given by $SU$-rank. Assume without loss of generality that the measure of a single vertex is $1$. Let $\lambda$ be the measure of an edge. Then, $\mu$ must satisfy the following equations:
\begin{align}
    \mu\left(\ptwo\right) & = \lambda^2 \label{ptwo}\\
    \mu\left(\dtwo\right)+ \mu\left(\ptwo\right)& = 1 \label{dtwo}\\
    \mu\left(\pthree\right) &= \lambda^3 \label{pthree}\\
    \mu \left(\elle\right)+ \mu\left(\pthree\right) & = \lambda(1-\lambda^2) \label{elle}\\
    \mu\left( \ptwodot\right) &=\frac{ \mu\left(\elle\right)^2}{(1-\lambda^2)} \label{ptwodot}\\
     \mu\left(\mahT\right) & = \lambda^4 \label{T}\\
     \mu\left(\ptwodot\right)+\mu\left(\mahT\right) & =(1-\lambda^2)^2\lambda^2.\label{contreq}
\end{align}
\end{prop}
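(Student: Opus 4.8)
The plan is to derive each equation from the general machinery of Corollaries~\ref{amacor}, \ref{Hrushcor}, and \ref{noextcor}, together with the explicit combinatorics of $\mathcal{K}_f$ recorded in Subsection~\ref{evcl}. Throughout, a vertex and an edge isolate complete types, and by Notation~\ref{notameas} and the normalisation $\mu(\text{vertex})=1$, $\mu(\text{edge})=\lambda$, every graph symbol in the statement denotes a concrete complete type over $\emptyset$. The dimension is $SU$-rank by Corollary~\ref{SUok}, which we are allowed to assume; note $\delta(X)=2|X|-|E(X)|$ with $\alpha=2$, so $\delta$ of a single vertex is $2$ and $\delta$ of an edge is $3$. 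First I would dispatch \eqref{ptwo}, \eqref{pthree}, \eqref{T}: a path $P_k$ of length $k$ is the free amalgamation of a path $P_{k-1}$ with an edge over the common endpoint, and one checks directly (using $f(t)>t$ for $t<6$ and the absence of short cycles) that there is no proper eventual closure, so Corollary~\ref{noextcor} gives $\mu(P_k)=\mu(P_{k-1})\mu(\text{edge})/\mu(\text{vertex})=\lambda\,\mu(P_{k-1})$; iterating from $\mu(P_1)=\lambda$ yields $\mu(\ptwo)=\lambda^2$, $\mu(\pthree)=\lambda^3$, $\mu(\mahT)=\lambda^4$ (the last is also a free amalgam of a $\ptwo$ and an edge along a central vertex, the distance condition being satisfied).

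Next, \eqref{dtwo} and \eqref{elle} come from Corollary~\ref{amacor} / the lemma preceding it: fix a vertex $v$ together with a specified neighbour $u$ (an edge $uv$). The two ways a further vertex $w\neq u$ can sit over $uv$ of maximal $SU$-rank are: $w$ adjacent to $v$ (a path $\ptwo$ with centre $v$) or $w$ non-adjacent to $v$ (two independent edges at $v$, i.e.\ $\dtwo$), and these are the only non-forking extensions; additivity gives $\mu(\dtwo)+\mu(\ptwo)=\mu(\text{type of a vertex with one neighbour}) = \mu(\text{edge})\cdot(\text{measure of a fresh vertex over an edge})$. Here the fresh vertex over the edge has full $SU$-rank $2$ but its type is "a vertex $v'$ distinct from the two given points", which over an edge has measure $1$ by additivity and the normalisation; more carefully one runs Fubini as in Remark~\ref{Fub}. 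I would present this as: $\mu(\text{edge}+\text{free vertex})=\mu(\ptwo)+\mu(\dtwo)$ on one side and $=\lambda\cdot 1$ isn't right dimensionally—rather the total is the measure of $\mathrm{tp}$ of "$uv$ an edge, $w$ any third point", projecting onto $w$ this is all of $M_f$ minus two points so has measure $1$, and projecting the pair $uv$ independently; the cleanest route is Fubini projecting $(u,v,w)\mapsto w$: the fibre over generic $w$ is "an edge avoiding $w$", of measure $\lambda^2$... I would instead just invoke Corollary~\ref{amacor} with $\overline a = v$, $\overline b = u$ (edge $uv$), $\overline c = w$ a generic vertex: $\mu(\{v,w\})\mu(\{u,v\})/\mu(\{v\}) = \sum_i \mu(\text{extension}_i)$, i.e.\ $1\cdot\lambda/1 = \mu(\ptwo)+\mu(\dtwo) = \lambda$, which is \emph{not} \eqref{dtwo}. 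The correct bookkeeping is $\mu(\dtwo)+\mu(\ptwo)=1$, so I must instead take $\overline a=v$, $\overline b = w$ both single vertices with $\overline c = u$ an edge endpoint... The honest statement: taking a single vertex $v$, a single vertex $w$, $v\neq w$, and an edge $vu$, Corollary~\ref{amacor} with $(\overline a,\overline b,\overline c)=(v,\,w,\,u)$ reading $\mu(\overline{ac})\mu(\overline{ab})/\mu(\overline a)$ with $\overline{ac}=\text{edge }vu$ and $\overline{ab}=\text{"two vertices"}$ gives $\lambda\cdot 1/1$; but $\mu(\text{"$v,w$ two distinct vertices"})$: Fubini onto $v$ gives fibre "$w\neq v$", which by additivity has measure $1$, so $\mu(\{v,w\})=1$. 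Hence $\lambda = \mu(\dtwo)+\mu(\ptwo)$ would follow, contradicting \eqref{dtwo}; therefore \eqref{dtwo} must instead be read as coming from projecting a two-edge configuration. I will work out the exact indexing in the writeup; the mechanism is uniformly Corollary~\ref{amacor}/\ref{Hrushcor} and the only subtlety is correctly listing the non-forking extensions, which the no-short-cycles property and Lemma~\ref{iterate1ext} make finite and explicit.

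For \eqref{ptwodot}, \eqref{T}, \eqref{contreq} the key input is the probabilistic-independence theorem, Corollary~\ref{triang} (equivalently Theorem~\ref{indeptheorem}). The graph $\ptwodot$ is (a closure of) the free amalgam of two copies of $\elle$ over a common $\ptwo$, or better: it is built from a centre so that two events "$x\in\mathrm{loc}$" become independent; applying \eqref{measeq0} with $a_3$ the common part and $a_1,a_2$ the two attached pieces yields $\mu(\ptwodot)=\mu(\elle\text{-piece}/a_3)^2/\mu(a_3) = \mu(\elle)^2/(1-\lambda^2)$ after substituting $\mu(a_3)=1-\lambda^2$ from \eqref{elle}-bookkeeping (the denominator $1-\lambda^2$ is the measure of a $\ptwo$ whose endpoints are \emph{non}-adjacent, i.e.\ the relevant base type). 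Finally \eqref{contreq} is again Corollary~\ref{amacor}/\ref{Hrushcor} applied to the free amalgam underlying $\ptwodot$, whose \emph{eventual closures} are exactly $\ptwodot$ itself (the non-forking, non-closing extension) and $\mahT$ (a one-point closure collapsing the configuration) — here I must verify via Lemma~\ref{iterate1ext} and Remark~\ref{1ptevc} that these are the \emph{only} two, using that the ambient predimension and the bound $N-|B\amalg_A C|<6$ leave room for at most a handful of one-point extensions and the $6$-cycle obstruction kills all but the stated ones; the automorphism factor $|\mathrm{Aut}(D_i/BC_i)|$ in Corollary~\ref{Hrushcor} must be computed (it is $1$ for the relevant $D_i$ by rigidity), and the right-hand base measures multiply out to $(1-\lambda^2)^2\lambda^2$. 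The main obstacle is precisely this enumeration-of-eventual-closures step for \eqref{contreq}: one has to be certain the list is complete and that each $\mathrm{Aut}(D_i/BC_i)$ is trivial, and this is where the carefully chosen values $f(1)=2$, $f(4)=5$, $f(6)=6$ (forcing minimum cycle length $6$) and the small-extension control of Subsection~\ref{evcl} do all the work.
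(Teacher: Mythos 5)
Your treatment of \eqref{ptwo}, \eqref{pthree} and \eqref{T} is fine (your decomposition of a path of length $k$ as a path of length $k-1$ freely amalgamated with an edge over an endpoint is a valid alternative to the paper's, and gives the same values; note only that $\mahT$ is a five-vertex tree obtained as two paths of length three amalgamated over a path of length two, not a four-vertex graph, though the answer $\lambda^4$ is unaffected). But there are two genuine gaps. First, you never actually derive \eqref{dtwo} (or \eqref{elle}): all of your attempted instantiations of Corollary \ref{amacor} keep an edge in the picture and produce $\lambda$ on the left-hand side, and you explicitly defer the "exact indexing". The correct instance is the free amalgamation of two single vertices over $A=\emptyset$: the left-hand side is $\mu(v)\mu(w)/\mu(\emptyset)=1$, and the eventual closures are the two disjoint vertices themselves ($\dtwo$, i.e.\ the complete type "distance $>2$") together with the unique one-point closure, a path of length two; this gives $1=\mu(\dtwo)+\mu(\ptwo)$. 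Equation \eqref{elle} is the analogous computation for $(\text{edge})\amalg_{v}(\text{two disjoint vertices})$, whose unique one-point closure is a path of length three.

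The more serious problem is \eqref{contreq}. You propose to obtain it from Corollary \ref{Hrushcor} applied to "the free amalgam underlying $\ptwodot$", with eventual closures $\ptwodot$ and $\mahT$. That cannot work: that amalgam (two copies of $\elle$ over $\dtwo$) has \emph{no} proper eventual closures by Corollary \ref{noextcor} --- that is precisely how \eqref{ptwodot} is derived --- and even granting your enumeration, the left-hand side of Corollary \ref{Hrushcor} for that amalgam is $\mu(\elle)^2/(1-\lambda^2)$, not $(1-\lambda^2)^2\lambda^2$; you would be deriving two incompatible equations from the same amalgamation. Equation \eqref{contreq} is of a different nature and is exactly the place where Section \ref{indepmeas} is indispensable: one applies Corollary \ref{triang} to three pairwise independent \emph{singletons} whose pairwise $2$-types are "distance $>2$", "distance $>2$", "distance $2$" --- a configuration that is not a single free amalgamation over a common closed base --- to get the right-hand side $(1-\lambda^2)^2\lambda^2$, and then separately enumerates the complete $3$-types refining this conjunction of $2$-types (there are exactly two, whose closures are the graph $\ptwodot$ and its unique one-point extension $\mahT$), using Fubini on the existential projections to identify their measures with $\mu(\ptwodot)$ and $\mu(\mahT)$. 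Without this step the system \eqref{ptwo}--\eqref{T} is just the free-amalgamation calculus and is consistent with $\lambda>0$, so the non-measurability proof would not go through.
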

\begin{proof}
Equation \ref{contreq} is obtained with a different technique from the rest. The proof for equations \ref{ptwo}-\ref{T} consists in identifying various free amalgamations in the graphs involved and in the repeated use of Corollary \ref{noextcor} or the fact that all eventual closures we will consider are obtained by iterating one-point extensions and using Theorem \ref{Hrushcor}. Again, to obtain these equations, we consider the eventual closures $D_i$ of a free amalgamation $B\amalg_A C$. For clarity, we shall illustrate this through pictures colouring differently the components of $B\amalg_A C$ and the one-point extensions. We colour the copy of $A$ in purple,  the copy of $B\setminus A$ in blue and the copy of $C\setminus A$ in orange. When dealing with one-point extensions we shall colour the additional points in green. If you are reading the paper in black and white printing, the copy of $B\setminus A$ appears black, the copy of $A$ appears grey, the copy of $C\setminus A$ appears light grey and any one-point extension will be very light grey. We also label the vertices for extra clarity.\\

Let us begin with equation \ref{ptwo}. Consider a path of length two as a free amalgamation of two paths of length one over a point as in Figure \ref{fig:ptwo}. This free amalgamation has no further eventual closures. Since the measure of an edge is $\lambda$ and the measure of a point is 1, equation \ref{ptwo} follows from Corollary \ref{noextcor}.
\begin{figure}[H]
    \centering
\includegraphics[]{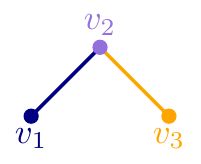}
    \caption{We consider a path of length two as the free amalgamation of the two edges $B:=v_1v_2$ and $C:=v_2 v_3$ over the intermediate vertex $A:=v_2$. Note that this path has already maximal size for its dimension  as $f(3)=4$ and so there are no further eventual closures.}
    \label{fig:ptwo}
\end{figure}

Now, for equation \ref{dtwo}, consider the free amalgamation of two points over the empty set. This does have a one point closure, namely a copy of a path of length two (by joining the vertex of the extension to the two distinct vertices in the amalgamation). There cannot be further extensions since $f(3)=4$, and so the resulting graph has maximal size for its dimension. Since vertices have measure one in $\mathcal{M}_f$,  equation \ref{dtwo} follows. Note that as a consequence of equations \ref{dtwo} and \ref{ptwo}, we obtain that 
\[\mu\left(\dtwo\right)=(1-\lambda^2),\]
and we shall use this fact in future equations.

Equations (\ref{pthree})-(\ref{T}) are explained in Figures \ref{fig:pthree}-\ref{fig:T}.

\begin{figure}[H]
    \centering
\includegraphics[]{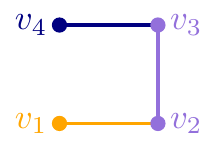}
    \caption{We consider a path of length three $v_1v_2v_3v_4$ as a free amalgamation of two paths of length two, $B:=v_1 v_2 v_3$ and $C:=v_2 v_3 v_4$, over the path of length one $A:=v_2 v_3$. There are no further eventual closures since $f(4)=5$ and so we have maximal size for this dimension. This amalgamation yields equation \ref{pthree} recalling that the two paths of length two in the amalgamation have measure $\lambda^2$, and that the path over which they are amalgamated has measure $\lambda$.}
    \label{fig:pthree}
\end{figure}

\begin{figure}[H]
    \centering

\begin{minipage}{0.4\textwidth}
\centering
\includegraphics[]{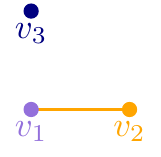}
\end{minipage}
\begin{minipage}{0.4\textwidth}
\centering
\includegraphics[]{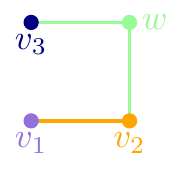}
\end{minipage}

    \caption{The disjoint union of an edge $v_1 v_2$ and a vertex $v_3$ corresponds to the free amalgamation of the disjoint union of two vertices $B:=v_1 v_3$ and of the edge $C:=v_1 v_2$ over the vertex $A:=v_1$. We can obtain a one point closure by joining a vertex $w$ to the points $v_2$ and $v_3$. It is the unique one point closure since $\vert B\setminus A\vert =\vert C\setminus A\vert =1$. There are no larger eventual closures since $f(4)=5$. Thus, we get equation \ref{elle}, recalling that the measure of two disjoint points is $(1-\lambda^2)$.}
    \label{fig:elle}
\end{figure}

\begin{figure}[H]
    \centering
\includegraphics[]{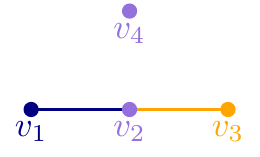}
\caption{We think of the disjoint union of a vertex $v_4$ and a path of length two $v_1 v_2 v_3$ as a free amalgamations of two copies of the disjoint union of an edge and a vertex, $B:=v_1 v_2 v_4$ and $C:=v_2v_3 v_4$, over the disjoint union of two vertices $A:=v_2v_4$. The free amalgamation has dimension $6$. Since $f(6)=6$ and the vertices in $B\setminus A$ and $C\setminus A$ are at distance two from each other, from Corollary \ref{noextcor} we obtain the desired equation \ref{ptwodot}.}
    \label{fig:ptwodot}
\end{figure}

\begin{figure}[H]
    \centering
\includegraphics[]{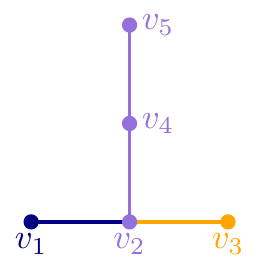}
    \caption{We consider the free amalgamation of two paths of length three, $B:=v_1 v_2 v_4 v_5$ and $C:=v_3 v_2 v_4 v_5$ over a path of length two $A:=v_2 v_4 v_5$. Equation \ref{T} follows similarly to the above by Corollary \ref{noextcor}.}
    \label{fig:T}
\end{figure}

We are now ready to obtain equation \ref{contreq}. For it we can use Corollary \ref{triang} since the structures we are working with have weak elimination of imaginaries, as proven in the Appendix \ref{appendix}. Since in $\mathcal{M}_f$ types of finite tuples are determined by the quantifier-free type of their closures, there are only two $2$-types over the emptyset, $p_1(x,y)$ and $p_2(x,y)$, such that $p_i(a,b)$ implies $a\indep{}{}b$. These are isolated by the formulas $\phi(x, y)$ and  $\psi(x, y)$, saying respectively that $x$ and $y$ are at distance two from each other and that they are at distance $>2$. We consider the measure of $\psi(x_1, x_2)\wedge\psi(x_2, x_3)\wedge \phi(x_1, x_3)$. By Corollary \ref{triang}, we have
\[\mu(\psi(x_1, x_2)\wedge\psi(x_2, x_3)\wedge \phi(x_1, x_3))=\mu(\psi(x_1, x_2))^2\mu(\phi(x_1, x_3))=(1-\lambda^2)^2\lambda^2.\]

We wish to express $\psi(x_1, x_2)\wedge\psi(x_2, x_3)\wedge \phi(x_1, x_3)$ as a disjoint union of $3$-types of maximal dimension in order to expand the left hand side of this equation. Let us look at the complete $3$-types $\mathrm{tp}(a_1a_2a_3)$ such that $a_1a_2a_3\vDash \psi(x_1, x_2)\wedge\psi(x_2, x_3)\wedge \phi(x_1, x_3)$ and $a_i\indep{}{} a_j a_k$ for $\{i,j,k\}=\{1,2,3\}$ distinct. Let $A$ be the graph in $\mathcal{K}_f$ given by the following picture:
\begin{figure}[H]
    \centering
\includegraphics[]{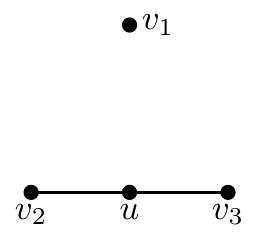}
    \caption{The graph $A$ consists of the disjoint union of a path of length two $v_2 u v_3$ and a vertex $v_1$. Note that if $A\leq M_f$, then $\mathcal{M}_f\vDash \psi(v_1, v_2) \wedge \psi(v_2, v_3) \wedge \phi(v_1, v_3)$ and $v_i$ is independent from $v_j v_k$ for $\{i,j,k\}=\{1,2,3\}$.}
    \label{fig:A}
\end{figure}
Again, since the types of these triples are determined by the quantifier-free types of their closures, we may focus on the graphs $B$ in $\mathcal{K}_f$ such that, $A\leq^* B$, $\delta(B)=\delta(A)=6$, and $v_1v_2, v_1v_2, v_2uv_3\leq B$. Note that since $\delta(B)=6$, $B$ may have at most $6$ points and must be obtained by iterating one-point extensions on the graph $A$.

\begin{figure}[H]
    \centering
 \begin{minipage}{0.45\textwidth}
\centering
\includegraphics[]{Pictures/A1.pdf}
 \end{minipage}
 \begin{minipage}{0.45\textwidth}
 \centering
\includegraphics[]{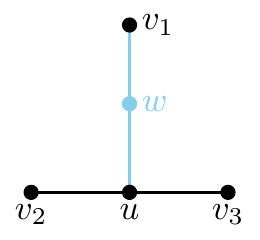}
 \end{minipage}
    \caption{Note that since $v_1v_2, v_1v_2, v_2uv_3\leq B$, any one-point extension of $A$ by a vertex $w$ must be such that $w$ has an edge with $u$. Since we avoid triangles, the only possible one-point extension is given by joining $w$ to $u$ and $v_1$. Call the graph so obtained $B'$. Note that $B'$ doesn't have any one-point extension since any two points of $B'$ have distance $\leq 3$ from each other and $\mathcal{M}_f$ avoids cycles of size $<6$.}
    \label{fig:contreq}
\end{figure}

Let $A(v_1, v_2, v_3, u)$ be the formula expressing $v_1 v_2 v_3 u \cong A\leq \mathcal{M}_f$ and $B'(v_1, v_2, v_3, u, w)$ be the formula expressing $v_1 v_2 v_3 u w\cong B'\leq \mathcal{M}_f$, where $B'$ is described in Figure \ref{fig:contreq}. From the argument in Figure \ref{fig:contreq}, we can see that there are only two $3$-types $p(x_1, x_2, x_3)$ which are completions of $\psi(x_1, x_2)\wedge\psi(x_2, x_3)\wedge \phi(x_1, x_3)$ and which satisfy our independence requirements, and they are isolated by $\exists u A(x_1, x_2, x_3, u)$ and $\exists u, w B'(x_1, x_2, x_3, u, w)$. Hence, by additivity, 
\[\mu(\exists u A(x_1, x_2, x_3, u))+\mu(\exists u, w B'(x_1, x_2, x_3, u, w))=\mu(\psi(x_1, x_2)\wedge\psi(x_2, x_3)\wedge \phi(x_1, x_3)).\]

Consider the projection $\pi:M_f^4\to M_f^3$ onto the first three coordinates. The restriction of this map to $A(M^4)$ is injective, and so by Fubini and algebraicity,
\[\mu(\exists u A(x_1, x_2, x_3, u))=\mu(A(x_1, x_2, x_3, u))=\mu\left(\ptwodot\right).\]
Similarly, considering the projection $\pi':M_f^5\to M_f^3$ onto the first three coordinates and $B(M^5)$,
\[\mu(\exists u, w B'(x_1, x_2, x_3, u, w))=\mu(B'(x_1, x_2, x_3, u, w))=\mu\left(\mahT\right).\]
This yields the desired equation. 
\end{proof}

\begin{theorem}\label{eqnstheorem} The structures of the form $\mathcal{M}_f$ built following Construction \ref{actualf} are not $MS$-measurable. Hence, there are $\omega$-categorical supersimple finite $SU$-rank structures which are not $MS$-measurable. Indeed, there are supersimple $\omega$-categorical structures of finite $SU$-rank and with independent $n$-amalgamation for all $n$ which are not $MS$-measurable.
\end{theorem}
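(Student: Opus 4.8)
The plan is to combine Corollary \ref{SUok} with Proposition \ref{manyeqns} and then extract a numerical contradiction with positivity. First I would observe that if $\mathcal{M}_f$ were $MS$-measurable then, by Corollary \ref{SUok}, it would be $MS$-measurable with dimension equal to $SU$-rank, so the hypotheses of Proposition \ref{manyeqns} are met: normalising the measure of a vertex to $1$ and letting $\lambda$ be the measure of an edge, all of the equations \ref{ptwo}--\ref{contreq} hold for the associated measure $\mu$. Positivity of $MS$-measures gives $\lambda>0$, and since $\mu\left(\dtwo\right)=1-\lambda^2$ by \ref{ptwo} and \ref{dtwo}, positivity also forces $\lambda^2<1$; these two strict inequalities are exactly what licenses the divisions below.

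The remainder is elementary algebra. From \ref{pthree} and \ref{elle} one gets $\mu\left(\elle\right)=\lambda(1-\lambda^2)-\lambda^3=\lambda(1-2\lambda^2)$, so by \ref{ptwodot},
\[\mu\left(\ptwodot\right)=\frac{\lambda^2(1-2\lambda^2)^2}{1-\lambda^2}.\]
On the other hand \ref{T} and \ref{contreq} give $\mu\left(\ptwodot\right)=(1-\lambda^2)^2\lambda^2-\lambda^4$. Equating the two expressions for $\mu\left(\ptwodot\right)$, dividing by $\lambda^2\neq 0$, multiplying by $1-\lambda^2\neq 0$, and writing $u=\lambda^2$, one obtains $(1-2u)^2=(1-u)^3-u(1-u)$, i.e. $1-4u+4u^2=1-4u+4u^2-u^3$, which collapses to $u^3=0$. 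Hence $\lambda=0$, contradicting positivity. Therefore $\mathcal{M}_f$ is not $MS$-measurable.

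The remaining clauses follow immediately: Construction \ref{actualf} is realised by an actual function (the explicit example that equals the prescribed values for $t\leq 6$ and $\log_3(x)-\log_3(6)+6$ for $t>6$), and by the Basic-properties lemma each such $\mathcal{M}_f$ is supersimple of finite $SU$-rank, while $f$ may be chosen slow-growing enough that $\mathcal{M}_f$ has independent $n$-amalgamation over finite algebraically closed sets for all $n$. So these structures witness both the first ``Hence'' and the ``Indeed''.

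As for the main obstacle: at this point there is essentially none, since all the real work has been absorbed into Proposition \ref{manyeqns} — especially equation \ref{contreq}, whose proof needs the probabilistic independence theorem in the form of Corollary \ref{triang} (hence the hypothesis $\mathrm{acl}^{eq}(\emptyset)=\mathrm{dcl}^{eq}(\emptyset)$, provided by weak elimination of imaginaries) together with the enumeration of the relevant one-point closures via Lemma \ref{iterate1ext} and Corollary \ref{noextcor}. Given that proposition, the only thing to be careful about is keeping track of the positivity constraints $\lambda>0$ and $\lambda^2<1$ that make the cancellations legitimate; there is no deeper difficulty, as the final contradiction is a one-line computation collapsing to $\lambda^6=0$.
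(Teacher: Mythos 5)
Your proposal is correct and follows essentially the same route as the paper: invoke Corollary \ref{SUok} to take the dimension to be $SU$-rank, feed the resulting measure into the equations of Proposition \ref{manyeqns}, and derive $\lambda=0$ by exactly the algebraic manipulation the paper performs (your $(1-2u)^2=(1-u)^3-u(1-u)$ with $u=\lambda^2$ is the paper's displayed identity after clearing denominators). Your explicit tracking of $\lambda>0$ and $\lambda^2<1$ to justify the divisions, and your remark that the auxiliary clauses follow from the appendix results, are sound additions but not a different argument.
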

\begin{proof}
By Corollary \ref{SUok}, we know that if $\mathcal{M}_f$ is $MS$-measurable, it also has an $MS$-dimension-measure where the dimension is given by $SU$-rank. However, the equations from Proposition \ref{manyeqns} imply that $\lambda=0$. This contradicts the positivity assumption for the measure in $MS$-measurable structures. To see this note that equations \ref{pthree} and \ref{elle} imply that
\[\mu\left(\elle\right)=\lambda-2\lambda^3.\]
From equations \ref{ptwodot}, \ref{T} and \ref{contreq} we then get that
\[\frac{\lambda^2(1-2\lambda^2)^2}{(1-\lambda^2)}+\lambda^4=(1-\lambda^2)^2\lambda^2.\]
Simplifying, we obtain that $\lambda=0$.
\end{proof}

Whilst we had a particular choice for $\alpha$ and for the initial values of $f$, it is plausible that similar results should hold for different choices of both. This raises the question of whether in general, non-trivial $\omega$-categorical Hrushovski constructions are not $MS$-measurable. This would provide further evidence for the conjecture that $MS$-measurable $\omega$-categorical structures are one-based.\\

In subsequent work \cite{Ergome}, we find more general reasons for which various classes of $\omega$-categorical Hrushovski constructions are not $MS$-measurable by studying invariant Keisler measures in these structures. In particular, we prove that $\omega$-categorical $MS$-measurable structures must satisfy a stronger version of the independence theorem and that the equation in Corollary \ref{triang} holds also when one of the pairs is weakly algebraically independent. We also prove that in the structures $\mathcal{M}_f$ we introduced, the formula asserting that "$x$ has distance two from $a$" does not fork over the empty-set but is universally measure zero. While our results make it implausible, it remains an open question whether any not one-based $\omega$-categorical Hrushovski construction is $MS$-measurable. 

\section{Appendix: Proving the main properties of our Hrushovski constructions}\label{appendix}

In Section \ref{finally}, we introduced a class of $\omega$-categorical supersimple finite $SU$-rank Hrushovski constructions and proved they are not $MS$-measurable. However, we omitted from the body of this article the proofs that our choice of $f$ yields such structures. Indeed, the proof of simplicity requires some effort. We include these technical results in this appendix.\\

We begin with an abstract discussion of how to prove simplicity in Subsection \ref{simpH}. Then, in Subsection \ref{example} we move to proving that our choice of $f$ in Construction \ref{actualf} yields $\omega$-categorical Hrushovski constructions and that these are supersimple of finite $SU$-rank, with weak elimination of imaginaries. We further note that these can be built so that they may satisfy independent $n$-amalgamation for all $n$.

\subsection{Simplicity of \texorpdfstring{$\omega$}{omega}-categorical Hrushovski constructions}\label{simpH}
In this subsection, we discuss under which conditions on $f$ $\omega$-categorical Hrushovski constructions are supersimple of finite $SU$-rank. This was first explored in \cite{Udiamalg}. We simplify some of the conditions for supersimplicity discussed in \cite{Wong} and \cite{supersimple}. This will shorten the proofs of supersimplicity of the structures considered in the next section. Much of this material is implicit in the proof of Theorem 3.6 in \cite{supersimple}. Here we make the arguments explicit and correct a mistake in Remark 3.8 of the same article.\\

Let us begin by reminding some basic properties of the d-closed and self-sufficient relations \cite[Lemma 3.10]{Evans}:

\begin{lemma}\label{basicleq} Let $C\in\overline{\mathcal{K}}$ and let $\leq'$ stand for either $\leq$ or $\leq^*$. Then, the following hold:
\begin{enumerate}
    \item Let $A\leq' C$ be finite, and $B\subseteq C$. Then, $A\cap B\leq'B$.
    \item Let $A$ and $B$ be finite such that $A\leq'B\leq'C$. Then, $A\leq'C$.
    \item Let $A$ and $B$ be finite with $A, B\leq'C$. Then, $A\cap B\leq'C$.
\end{enumerate}
\end{lemma}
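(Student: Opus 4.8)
The three statements are all consequences of the submodularity of the predimension $\delta$, and the cleanest route is to prove (1) and (2) directly from submodularity and then derive (3) formally from them. To handle $\leq$ and $\leq^*$ uniformly I would phrase each verification as: for every admissible finite test set $F$ one has $\delta(\cdot) \leq \delta(F)$, where the inequality is non-strict in the $\leq^*$ case and strict in the $\leq$ case. The only genuine extra work in the $\leq$ case is the bookkeeping of which inclusions are proper, so that the defining strict inequalities of $\leq$ may legitimately be invoked.

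For (1), given finite $F$ with $A \cap B \subseteq F \subseteq B$, I would apply submodularity to $A$ and $F$ inside $C$:
\[ \delta(A \cup F) \leq \delta(A) + \delta(F) - \delta(A \cap F). \]
Since $F \subseteq B$ and $A \cap B \subseteq F$ one checks that $A \cap F = A \cap B$, so this reads $\delta(A \cup F) \leq \delta(A) + \delta(F) - \delta(A \cap B)$. As $A \subseteq A \cup F \subseteq C$ and $A \leq' C$, we get $\delta(A) \leq \delta(A \cup F)$, and chaining the two inequalities yields $\delta(A \cap B) \leq \delta(F)$. For the $\leq$ case one first observes that $A \cap B \subsetneq F$ forces $A \subsetneq A \cup F$ (otherwise $F \subseteq A$ together with $F \subseteq B$ gives $F \subseteq A \cap B$, contradicting properness), so $A \leq C$ supplies the strict inequality $\delta(A) < \delta(A \cup F)$ and hence $\delta(A \cap B) < \delta(F)$.

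For (2), given finite $F$ with $A \subseteq F \subseteq C$, I would instead form $F \cup B$ and $F \cap B$, each suitably sandwiched inside $C$. Submodularity applied to $F$ and $B$ gives $\delta(F \cup B) \leq \delta(F) + \delta(B) - \delta(F \cap B)$, while $B \leq' C$ (using $B \subseteq F \cup B \subseteq C$) gives $\delta(B) \leq \delta(F \cup B)$; together these produce $\delta(F \cap B) \leq \delta(F)$. Since $A \subseteq F \cap B \subseteq B$, the hypothesis $A \leq' B$ then gives $\delta(A) \leq \delta(F \cap B) \leq \delta(F)$. In the $\leq$ case, if $F \subseteq B$ one concludes directly from $A \leq B$; otherwise $B \subsetneq F \cup B$ lets $B \leq C$ deliver the strict step $\delta(F \cap B) < \delta(F)$, and combining with $\delta(A) \leq \delta(F \cap B)$ gives $\delta(A) < \delta(F)$.

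Finally, (3) requires no new computation: given $A, B \leq' C$ finite, I would apply (1) with the pair $A \leq' C$ and $B \subseteq C$ to obtain $A \cap B \leq' B$, and then apply the transitivity statement (2) to $A \cap B \leq' B \leq' C$ to conclude $A \cap B \leq' C$. The only real obstacle throughout is the careful case analysis in the strict $\leq$ case, ensuring that each invocation of a defining inequality of $\leq$ occurs over a genuinely proper inclusion; the $\leq^*$ case is uniformly simpler, since only non-strict inequalities are ever needed.
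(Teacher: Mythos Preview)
Your proof is correct; the argument via submodularity, together with the careful bookkeeping of proper inclusions in the $\leq$ case, is the standard one. Note that the paper does not actually prove this lemma but simply records it with a citation to \cite[Lemma~3.10]{Evans}, so there is no proof in the paper to compare against; your write-up supplies exactly the routine verification that the cited reference would contain.
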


\begin{definition}[Independence Theorem Diagram]\label{ITDdef} Let $D$ be a graph.  Suppose that $D$ has subgraphs $D_i$ and $D_{ij}$ for $0\leq i<j\leq 3$ all contained in $\mathcal{K}_f$. To simplify our notation, we allow inverting indices, e.g. $D_{12}=D_{21}$. 
We say that $(D; D_i; D_{ij})$ is an \textbf{independence theorem diagram} (ITD) with respect to $\mathcal{K}_f$ if the following hold:
\begin{itemize}
    \item $D_{0j}=D_j$ for $1\leq j\leq 3$;
    \item $D_i\cap D_j=D_0$ for $1\leq i<j\leq 3$;
    \item $D_i, D_j\leq D_{ij}$ for $1\leq i<j\leq 3$; 
    \item $D_{ij}\cap D_{jk}=D_j$ for $\{i,j,k\}=\{1,2,3\}$;
    \item $D_i$ and $D_j$ are independent over $D_0$ in $D_{ij}$ for $1\leq i<j\leq 3$;
    \item Any edge in $D$ is entirely contained within some $D_{ij}$ for $1\leq i<j\leq 3$.
\end{itemize}
Figure \ref{fig:2} gives a visual representation of an independence theorem diagram.

We say that $(D; D_i; D_{ij})$ is a \textbf{proper ITD} when it satisfies the following additional conditions:
\begin{itemize}
\item $D_0 \subsetneq D_i$ for $1\leq i\leq 3$;
\item $D_i\cup D_j\subsetneq D_{ij}=\mathrm{cl}_D(D_i\cup D_j)$ for $1\leq i<j\leq 3$.
\end{itemize}

We say that $\mathcal{K}_f$ is \textbf{closed under ITDs}  if, whenever $(D; D_i; D_{ij})$ is an ITD, we have that $D\in\mathcal{K}_f$. 
\end{definition}
\begin{figure}
    \centering
    \includegraphics[width=13cm]{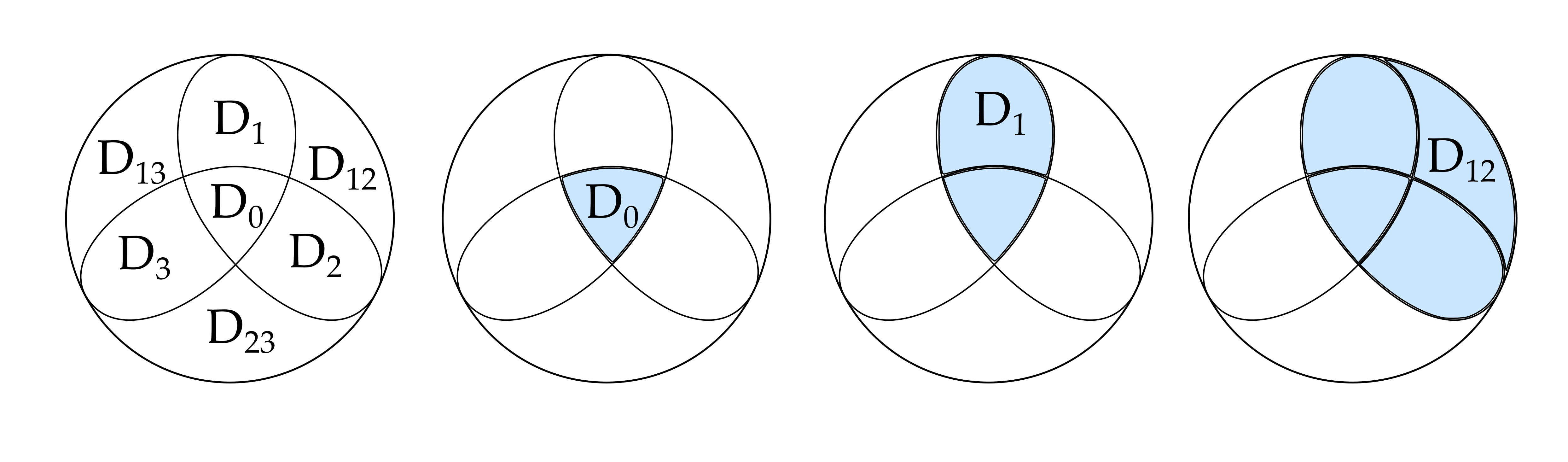}
    \caption{A visual representation of an independence theorem diagram. The first picture represents an ITD with its various parts labelled. To distinguish the parts more clearly, the last three pictures show as a shaded area $D_0$, $D_1$, and $D_{12}$ respectively.}
    \label{fig:2}
\end{figure}

We note some basic properties of independence theorem diagrams and proper ITDs.

\begin{lemma} Suppose $(D;D_i;D_{ij})$ is an ITD for $\mathcal{K}_f$. Then, 
\begin{enumerate}
    \item $D_0\leq D_{ij}$ for $0\leq i<j\leq 3$;
    \item $D_{ij}\leq D$;
    \item $D_{ij}\leq D_{ij}\cup D_{jk}$;
    \item $D_{ij}\cup D_{jk}\leq^* D$.
\end{enumerate}
\end{lemma}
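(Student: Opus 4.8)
\emph{The plan} is to record the structural consequences of the ITD axioms once, then dispatch (1), (3), (4) quickly and spend the real effort on (2). Throughout the only tools are submodularity of $\delta$ (with equality exactly for free amalgams), Lemma~\ref{basicleq}, Lemma~\ref{indepfree}, and Remark~\ref{moreind}.

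\emph{Setup and item (1).} Item (1) is immediate: for $1\le i<j\le 3$ we have $D_i,D_j\le D_{ij}$, so $D_0=D_i\cap D_j\le D_{ij}$ by Lemma~\ref{basicleq}(3); and then $D_0\le D_{jk}$ together with $D_j\subseteq D_{jk}$ gives $D_0=D_0\cap D_j\le D_j=D_{0j}$ by Lemma~\ref{basicleq}(1). Having verified $D_0\le D_i,D_j\le D_{ij}$, Lemma~\ref{indepfree} applies to the independence hypothesis of the ITD and says that $D_i$ and $D_j$ are \emph{freely amalgamated} over $D_0$ in $D_{ij}$, i.e.\ no edge joins $D_i\setminus D_0$ to $D_j\setminus D_0$. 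Combining this with the ITD axiom that every edge of $D$ lies inside some $D_{ab}$, a short case distinction (on which $D_{ab}$ contains a hypothetical crossing edge) shows that for distinct $a,b$ there is no edge of $D$ between $D_{ab}\setminus(D_a\cup D_b)$ and the other two $D_{cd}$'s. Hence all unions of the $D_{ij}$'s we need, and their intersections with an intermediate set $F$, are free amalgams, so submodularity becomes equality; e.g.\ $\delta(D_{ij}\cup D_{jk})=\delta(D_{ij})+\delta(D_{jk})-\delta(D_j)$. Finally, from the independence of $D_i$ and $D_j$ over $D_0$ in $D_{ij}$ we get $D_i\cup D_j\le^*\mathrm{cl}_{D_{ij}}(D_i\cup D_j)$ by Remark~\ref{moreind}; since a closure is $d$-closed, hence self-sufficient, transitivity of $\le^*$ (Lemma~\ref{basicleq}(2)) gives $D_i\cup D_j\le^* D_{ij}$.

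\emph{Items (3) and (4).} For (3), given $D_{ij}\subsetneq F\subseteq D_{ij}\cup D_{jk}$ write $F=D_{ij}\cup F'$ with $F'=F\cap D_{jk}$; then $F'\cap D_{ij}=D_j$ and $D_j\subsetneq F'\subseteq D_{jk}$ properly (otherwise $F=D_{ij}$), so by free amalgamation and $D_j\le D_{jk}$,
\[\delta(F)=\delta(D_{ij})+\delta(F')-\delta(D_j)>\delta(D_{ij}).\]
For (4), first note that every vertex of $D$ outside $D^{*}:=D_{12}\cup D_{13}\cup D_{23}$ is isolated in $D$ (any incident edge would have to lie in some $D_{ab}\subseteq D^{*}$), so such vertices only increase $\delta$ and it suffices to treat $F\subseteq D^{*}$. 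Writing $F=(D_{ij}\cup D_{jk})\cup F''$ with $F''=F\cap D_{ik}$ we have $F''\cap(D_{ij}\cup D_{jk})=D_i\cup D_k$, and free amalgamation together with $D_i\cup D_k\le^* D_{ik}$ gives
\[\delta(F)=\delta(D_{ij}\cup D_{jk})+\delta(F'')-\delta(D_i\cup D_k)\ge\delta(D_{ij}\cup D_{jk}).\]

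\emph{Item (2), the hard part.} The obstacle is that $A\le B\le^* C$ does \emph{not} in general force $A\le C$, so (2) does not follow formally from (3) and (4); instead one argues directly. Reduce as above to $F\subseteq D^{*}$ and relabel so that the target is $D_{12}\le D$; for $D_{12}\subsetneq F\subseteq D^{*}$ write $F=D_{12}\cup F_{13}\cup F_{23}$ with $F_{13}=F\cap D_{13}$ and $F_{23}=F\cap D_{23}$. Expanding $\delta(F)$ along the free-amalgamation structure of the first paragraph one gets
\[\delta(F)-\delta(D_{12})=a+b-c,\qquad a=\delta(F_{13})-\delta(D_1),\quad b=\delta(F_{23})-\delta(D_2),\quad c=\delta(F\cap D_3)-\delta(D_0),\]
with $a,b,c\ge 0$ by $D_1\le D_{13}$, $D_2\le D_{23}$ and $D_0\le D_3$. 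The key point is $a\ge c$: applying submodularity to $F_{13}\cup D_3$ and using $D_1\cup D_3\le^* D_{13}$ and $\delta(D_1\cup D_3)=\delta(D_1)+\delta(D_3)-\delta(D_0)$ yields $\delta\big(D_1\cup(F\cap D_3)\big)\le\delta(F_{13})$, which rearranges to $c\le a$. Hence $\delta(F)-\delta(D_{12})\ge b\ge 0$. If $b>0$ this is strict. If $b=0$ then $F_{23}=D_2$ (because $D_2\le D_{23}$), which forces $F\cap D_3=D_2\cap D_3=D_0$ and so $c=0$, while $F\supsetneq D_{12}$ forces $F_{13}\supsetneq D_1$ and hence $a>0$; either way $\delta(F)>\delta(D_{12})$, giving $D_{12}\le D$. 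I expect the crossing-edge bookkeeping in the first paragraph and the inequality $a\ge c$ in (2) to be the only genuinely delicate points; everything else is formal manipulation with $\delta$, free amalgamation, and Lemma~\ref{basicleq}.
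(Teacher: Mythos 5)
Your proof is correct, but it cannot really be compared step-by-step with the paper's, because the paper does not prove this lemma: part (1) is dispatched with a one-line appeal to Lemma \ref{basicleq}, and parts (2)--(4) are simply cited from Lemma 1.6 of Wong's thesis. So your argument has to stand on its own, and it does. I checked the three places where something could go wrong. First, the crossing-edge claim as you literally state it (``no edge between $D_{ab}\setminus(D_a\cup D_b)$ and the other two $D_{cd}$'s'') is too strong, since $D_a$ itself sits inside another $D_{ac}$; what your case distinction actually proves, and what you actually use, is that every edge incident to a vertex of $D_{ab}\setminus(D_a\cup D_b)$ lies entirely inside $D_{ab}$, which together with the free amalgamation of $D_a$ and $D_b$ over $D_0$ coming from Lemma \ref{indepfree} does yield all four free-amalgam identities you invoke ($D_{ij}\cup D_{jk}$ over $D_j$; $D_{12}\cup F_{13}$ over $D_1$; the further union with $F_{23}$ over $D_2\cup(F\cap D_3)$; and $(D_{ij}\cup D_{jk})\cup F''$ over $D_i\cup D_k$). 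Second, the telescoping identity $\delta(F)-\delta(D_{12})=a+b-c$ is right. Third, and most importantly, the inequality $a\ge c$ holds for exactly the reason you give: submodularity applied to $F_{13}$ and $D_3$ gives $\delta(F_{13}\cup D_3)\le\delta(F_{13})+\delta(D_3)-\delta(F\cap D_3)$, while $D_1\cup D_3\le^* D_{13}$ and freeness give $\delta(F_{13}\cup D_3)\ge\delta(D_1)+\delta(D_3)-\delta(D_0)$, and these combine to $c\le a$; the endgame ($b>0$, or $b=0$ forcing $F_{23}=D_2$, $c=0$, $a>0$) is also correct, as is your observation that (2) does not follow formally from (3) and (4) because $A\le B\le^* C$ need not give $A\le C$. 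The only remaining remark is cosmetic: the reduction to $F\subseteq D^*$ is legitimate precisely because the ITD definition only asks that $D$ \emph{contain} the $D_{ij}$ as subgraphs with all edges inside them, so any extra vertices are isolated and each contributes $\alpha>0$ to $\delta$, preserving strictness where it is needed in (2) and (3).
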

\begin{proof} Part 1 is a direct consequence of Lemma \ref{basicleq}.1. Parts 2,3, and 4 are from \cite[Lemma1.6]{Wong}.
\end{proof}

From \cite[Corollary 2.24 \& Theorem 3.6]{supersimple} we know that: 

\begin{theorem}\label{supersimpth} Suppose that $\alpha\in\mathbb{N}$ and $\mathcal{K}_f$ is closed under ITDs. Then, $\mathcal{M}_f$ is supersimple of finite $SU$-rank. 
\end{theorem}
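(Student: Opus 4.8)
The plan is to verify that the Hrushovski-dimension independence relation $\indep{}{d}$ satisfies the axioms characterising a simple theory in the sense of Kim and Pillay \cite[Theorem 4.2]{KPsimp}; this simultaneously yields simplicity of $\mathcal{M}_f$ and the identification $\indep{}{d}=\indep{}{f}$ with non-forking independence. Once this is in place, the hypothesis $\alpha\in\mathbb{N}$ forces supersimplicity and finite $SU$-rank by bounding the $SU$-rank by the integer-valued Hrushovski dimension. Throughout, I would work with the closed finite configurations $\mathrm{cl}(A)$, using that closures are finite with $\vert\mathrm{cl}(A)\vert\leq f^{-1}(\alpha\vert A\vert)$ (Theorem \ref{Hrushconstrtheorem} and the surrounding discussion), and use Lemma \ref{indepfree} to translate model-theoretic independence into free amalgamation of closed sets.

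First I would dispatch the routine axioms. Invariance is immediate since $\delta$ and $\mathrm{cl}$ are defined combinatorially. Finite and local character hold because $d(\overline{a}/B)=\delta(\mathrm{cl}(\overline{a}B))-\delta(\mathrm{cl}(B))$ is a natural number (here $\alpha\in\mathbb{N}$ guarantees integrality) bounded below by $0$, so its minimum over subsets of $B$ is attained on a finite subset. Symmetry, transitivity, monotonicity and normality follow directly from the additivity identity $d(\overline{a}\overline{b}/C)=d(\overline{a}/\overline{b}C)+d(\overline{b}/C)$. For the extension axiom I would invoke the free amalgamation property together with the extension property of Theorem \ref{Hrushconstrtheorem}: given $\overline{a}\indep{C}{d}B$ and $B\subseteq D$, one forms the free amalgam of $\mathrm{cl}(\overline{a}BC)$ with $\mathrm{cl}(D)$ over $\mathrm{cl}(BC)$, checks it lies in $\mathcal{K}_f$, and embeds it over $D$ into $\mathcal{M}_f$ to produce the required independent conjugate $\overline{a}'$.

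The main obstacle is the Independence Theorem over algebraically closed parameter sets, and this is exactly where closure under ITDs enters. Given $C\leq\mathcal{M}_f$ (so $C=\mathrm{acl}(C)$) and three pairwise-independent types with $\overline{a}_1\equiv_C\overline{a}_2$ that must be amalgamated, I would pass to their closures and assemble an independence theorem diagram $(D;D_i;D_{ij})$ in the sense of Definition \ref{ITDdef}, with $D_0=\mathrm{cl}(C)$, the $D_i$ the closures of the individual data, and each $D_{ij}$ the closure of the corresponding pairwise-independent amalgam. Lemma \ref{indepfree} and Remark \ref{moreind} guarantee that these pieces are freely amalgamated and self-sufficiently placed, so the configuration genuinely satisfies the ITD axioms; the basic properties of ITDs then yield $D_{ij}\leq D$ and $D_{ij}\cup D_{jk}\leq^*D$. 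Closure under ITDs gives $D\in\mathcal{K}_f$, and since $D_{ij}\leq D$ shows each face is self-sufficient, the extension property embeds $D$ into $\mathcal{M}_f$ over one face, producing a single realisation witnessing all three types simultaneously and pairwise independently. This establishes the Independence Theorem, and Kim--Pillay then gives simplicity with $\indep{}{d}=\indep{}{f}$.

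Finally I would upgrade to supersimplicity of finite rank. Supersimplicity is immediate from the local character already verified, since every type then does not fork over a finite subset of its domain. For finiteness of $SU$-rank, the identification $\indep{}{d}=\indep{}{f}$ means that $\mathrm{tp}(\overline{a}/BC)$ forks over $C$ precisely when $d(\overline{a}/BC)<d(\overline{a}/C)$; as $d$ is $\mathbb{N}$-valued, each forking extension drops $d$ by at least $1$. A straightforward induction on the ordinal $SU$-rank then gives $SU(\overline{a}/C)\leq d(\overline{a}/C)\leq d(\overline{a})\leq\alpha\vert\overline{a}\vert<\omega$, so $\mathcal{M}_f$ is supersimple of finite $SU$-rank.
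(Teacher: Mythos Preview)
The paper does not actually prove this theorem: it is stated with a bare citation to \cite[Corollary 2.24 \& Theorem 3.6]{supersimple}, so there is no in-paper argument to compare against. Your sketch is essentially the route taken in that reference --- verify the Kim--Pillay axioms for $d$-independence, with closure under ITDs supplying the amalgam needed for the Independence Theorem, and then read off finite $SU$-rank from the integrality of $d$ --- and in outline it is correct.

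Two steps would need more care in a full write-up, though neither is a genuine obstruction. First, the Kim--Pillay axiom is the Independence Theorem over a \emph{model}, whereas your ITD is a finite configuration with $D_0=\mathrm{cl}(C)$ for finite closed $C$; you should insert the routine reduction, via local character, from an arbitrary model $M$ to a finite closed base before assembling the diagram. Second, after embedding $D$ over the face $D_{23}$ you assert that the new realisation is independent from $bc$ over $C$; this amounts to $D_1\cup D_{23}\leq^* D$, equivalently $\delta(\mathrm{cl}_D(D_1\cup D_{23}))=\delta(D)$, which follows from the predimension identity $\delta(D)=\delta(D_1)+\delta(D_2)+\delta(D_3)-2\delta(D_0)$ together with the self-sufficiency properties recorded for ITDs, but deserves an explicit line rather than an appeal to Lemma \ref{indepfree} alone.
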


Hence, it is sufficient to include all ITDs in $\mathcal{K}_f$ in order to have supersimplicity and finite $SU$-rank for $\mathcal{M}_f$. However, this is not easy to check because in order to see whether $D$ is in $\mathcal{K}_f$ we need to verify that every subgraph $B\subset D$ is also in $\mathcal{K}_f$. The following lemmas help us simplifying this process. 

\begin{lemma} Let $C\leq D\in\mathcal{K}_f$, where $D$ is an ITD. Then, $C$ is an ITD. 
\end{lemma}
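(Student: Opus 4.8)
The statement to prove is: if $C \leq D \in \mathcal{K}_f$ where $D$ is an ITD $(D; D_i; D_{ij})$, then $C$ is an ITD. The natural approach is to define the candidate subgraphs of $C$ by intersecting with the given pieces of $D$: set $C_i := C \cap D_i$ for $0 \leq i \leq 3$ and $C_{ij} := C \cap D_{ij}$ for $1 \leq i < j \leq 3$, and then verify each of the six defining clauses of an independence theorem diagram (Definition \ref{ITDdef}) for the tuple $(C; C_i; C_{ij})$.

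\textbf{Plan.} First I would note that all the $C_i$ and $C_{ij}$ lie in $\mathcal{K}_f$, since they are subgraphs of $D \in \mathcal{K}_f$ and $\mathcal{K}_f$ is closed under subgraphs. Then I would check the clauses in order. The equality $C_{0j} = C_j$ is immediate from $D_{0j} = D_j$. The clause $C_i \cap C_j = C_0$ follows from $(C \cap D_i) \cap (C \cap D_j) = C \cap (D_i \cap D_j) = C \cap D_0 = C_0$, and similarly $C_{ij} \cap C_{jk} = C \cap (D_{ij} \cap D_{jk}) = C \cap D_j = C_j$; these are pure set-theory once the corresponding identities for $D$ are in hand. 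For $C_i, C_j \leq C_{ij}$: since $C \leq D$ we have $C \cap D_{ij} \leq D_{ij}$ by Lemma \ref{basicleq}.1 (intersecting a d-closed subset with any subgraph stays d-closed in that subgraph), i.e. $C_{ij} \leq D_{ij}$; combined with $D_i \leq D_{ij}$ and transitivity of $\leq$ (Lemma \ref{basicleq}.2) one gets $C_i = C_{ij} \cap D_i \leq D_{ij}$, hence $C_i \leq C_{ij}$ by applying Lemma \ref{basicleq}.1 inside $C_{ij}$. The edge-containment clause is inherited directly: any edge of $C$ is an edge of $D$, hence contained in some $D_{ij}$, hence (being an edge of $C$) contained in $C \cap D_{ij} = C_{ij}$.

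\textbf{The main obstacle} will be the independence clause: that $C_i$ and $C_j$ are independent over $C_0$ in $C_{ij}$, given that $D_i$ and $D_j$ are independent over $D_0$ in $D_{ij}$. By Lemma \ref{indepfree}, independence of $D_i, D_j$ over $D_0$ in $D_{ij}$ means $D_i$ and $D_j$ are freely amalgamated over $D_0$ (no extra edges between $D_i \setminus D_0$ and $D_j \setminus D_0$) and $\delta(D_i \cup D_j) = \delta(\mathrm{cl}_{D_{ij}}(D_i \cup D_j))$. Free amalgamation of $C_i$ and $C_j$ over $C_0$ inside $C_{ij}$ follows because any edge between $C_i \setminus C_0$ and $C_j \setminus C_0$ would be an edge between $D_i \setminus D_0$ and $D_j \setminus D_0$, contradicting free amalgamation in $D_{ij}$. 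The delicate point is the predimension equality $\delta(C_i \cup C_j) = \delta(\mathrm{cl}_{C_{ij}}(C_i \cup C_j))$; here I would use that $C \leq D$ forces $C_{ij} = C \cap D_{ij} \leq D_{ij}$ and argue that taking closures commutes appropriately, so that $\mathrm{cl}_{C_{ij}}(C_i \cup C_j) = C_{ij} \cap \mathrm{cl}_{D_{ij}}(D_i \cup D_j)$ — one containment is clear, and the other uses that $C_{ij}$ is d-closed in $D_{ij}$ so the closure computed inside $C_{ij}$ cannot be smaller than the trace of the ambient closure. Once this identification is made, and since $C_i \cup C_j = C_{ij} \cap (D_i \cup D_j)$ with $D_i \cup D_j \leq^* \mathrm{cl}_{D_{ij}}(D_i \cup D_j)$ (Remark \ref{moreind}), submodularity of $\delta$ together with the $\leq^*$ relation pins down $\delta(C_i \cup C_j) = \delta(\mathrm{cl}_{C_{ij}}(C_i \cup C_j))$, giving the independence clause via Lemma \ref{indepfree} again. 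I would also record that $D_i \leq D$ (from the basic ITD properties lemma) together with $C \leq D$ gives $C_i \leq C$, which is what makes the hypotheses of Lemma \ref{indepfree} applicable inside $C$; this completes the verification that $(C; C_i; C_{ij})$ is an ITD.
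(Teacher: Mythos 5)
Your overall strategy is the same as the paper's: intersect $C$ with the pieces of the diagram, push the purely set-theoretic clauses through, transfer $\leq$ downward via Lemma \ref{basicleq}.1, and reduce the independence clause to Lemma \ref{indepfree} by establishing $C_i\cup C_j\leq^*\mathrm{cl}_{C_{ij}}(C_i\cup C_j)$ from $D_i\cup D_j\leq^*\mathrm{cl}_{D_{ij}}(D_i\cup D_j)$. Most of the verification is fine, and you correctly identify the predimension equality as the only delicate point.

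However, the identity you lean on there, $\mathrm{cl}_{C_{ij}}(C_i\cup C_j)=C_{ij}\cap\mathrm{cl}_{D_{ij}}(D_i\cup D_j)$, is false in general, and the justification offered for the hard inclusion (``the closure computed inside $C_{ij}$ cannot be smaller than the trace of the ambient closure'') does not hold: that principle is valid when comparing closures of the \emph{same} set in $C_{ij}$ and in $D_{ij}$, but here the ambient closure is taken of the strictly larger set $D_i\cup D_j$, and its trace on $C_{ij}$ may contain points that lie in the closure only on account of vertices of $D_i\cup D_j$ that are not in $C$. Concretely, with $D_0=\emptyset$, $D_1=\{a\}$, $D_2=\{b\}$ and $\mathrm{cl}_{D_{12}}(D_1\cup D_2)=\{a,v,b\}$ a path with midpoint $v$, a d-closed $C$ containing $b$ and $v$ but not $a$ has $\mathrm{cl}_{C_{12}}(C_1\cup C_2)=\{b\}$ while the trace is $\{b,v\}$. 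What saves your argument is that the full identity is never needed: only the easy inclusion $\mathrm{cl}_{C_{ij}}(C_i\cup C_j)\subseteq\mathrm{cl}_{D_{ij}}(D_i\cup D_j)$ matters, together with the observation that $(D_i\cup D_j)\cap\mathrm{cl}_{C_{ij}}(C_i\cup C_j)=C_i\cup C_j$. Applying Lemma \ref{basicleq}.1 to $D_i\cup D_j\leq^*\mathrm{cl}_{D_{ij}}(D_i\cup D_j)$ with the subset $\mathrm{cl}_{C_{ij}}(C_i\cup C_j)$ then gives $C_i\cup C_j\leq^*\mathrm{cl}_{C_{ij}}(C_i\cup C_j)$ directly, hence the predimension equality (the reverse inequality being the minimality of the closure's predimension), and Lemma \ref{indepfree} finishes. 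This is exactly the paper's proof; replacing the false identity by the one-sided containment makes your argument coincide with it.
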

\begin{proof}
Call $C_{ij}$ the intersection of $C$ and $D_{ij}$. The only conditions that we need to check are that $C_i\leq C_{ij}$ and that $C_i$ and $C_j$ are independent over $C_0$ in $C_{ij}$. 

Now, the first condition follows by Lemma \ref{basicleq}.1 since $D_i\leq D_{ij}$ and $C_{ij}\subseteq D_{ij}$ implies $C_i=D_i\cap C_{ij}\leq C_{ij}$.

For the second condition, first note that since $D_i$ and $D_j$ are freely amalgamated over $D_0$ in $D_{ij}$, so must be $C_i$ and $C_j$ over $C_0$ in $C_{ij}$, being just intersections of these sets. We know that $D_i\cup D_j\leq^*\mathrm{cl}_{D_{ij}}(D_i\cup D_j), \mathrm{cl}_{C_{ij}}(C_i\cup C_j)\cap (D_j\cup D_j)= C_i\cup C_j$, and $\mathrm{cl}_{C_{ij}}(C_i\cup C_j)\cap\mathrm{cl}_{D_{ij}}(D_i\cup D_j)=\mathrm{cl}_{C_{ij}}(C_i\cup C_j)$. Hence, by Lemma \ref{basicleq}.1, $C_i\cup C_j\leq^*\mathrm{cl}_{C_{ij}}(C_i\cup C_j)$. But then, by Lemma \ref{indepfree}, we know that $C_i$ and $C_j$ are independent over $C_0$ in $C_{ij}$.
\end{proof}

Hence, in proving supersimplicity, we can reduce the number of graphs for which we check $f(\vert  D\vert )\leq \delta(D)$ by only focusing on proper ITDs.

\begin{lemma}\label{strongITD} The following are equivalent:
\begin{enumerate}
    \item $\mathcal{K}_f$ is closed under ITDs;
    \item $\mathcal{K}_f$ is closed under proper ITDs;
    \item For each proper ITD $D$ we have that $f(\vert D\vert )\leq\delta(D)$.
\end{enumerate}

\end{lemma}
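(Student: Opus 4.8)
The implications $(1)\Rightarrow(2)\Rightarrow(3)$ are immediate: every proper ITD is an ITD, and if a proper ITD $D$ lies in $\mathcal{K}_f$ then taking the subgraph $A'=D$ in the definition of $\mathcal{K}_f$ yields $f(|D|)\le\delta(D)$. So the content is $(3)\Rightarrow(1)$, which I would prove by induction on $|D|$: assuming $(3)$ and that every ITD of strictly smaller size lies in $\mathcal{K}_f$ (and that $\mathcal{K}_f$ has the free amalgamation property, as it does throughout this section), I show that an arbitrary ITD $(D;D_i;D_{ij})$ lies in $\mathcal{K}_f$, i.e.\ that $\delta(B)\ge f(|B|)$ for every $B\subseteq D$.

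For $B\subsetneq D$: the tuple $(B;B\cap D_i;B\cap D_{ij})$ is again an ITD --- the verification is word for word that of the lemma ``$C\le D\in\mathcal{K}_f$, $D$ an ITD, implies $C$ an ITD'' above, and that argument uses only $D_{ij}\in\mathcal{K}_f$ (whence $B\cap D_{ij}\in\mathcal{K}_f$, as $\mathcal{K}_f$ is closed under subgraphs), not $D\in\mathcal{K}_f$. Since $|B|<|D|$, the induction hypothesis gives $B\in\mathcal{K}_f$, in particular $\delta(B)\ge f(|B|)$. For $B=D$ with $D$ proper, $(3)$ gives $\delta(D)\ge f(|D|)$ directly. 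It remains to treat $B=D$ with $D$ improper, and the plan there is to rebuild $D$ by free amalgamations from pieces already controlled.

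First normalize the diagram: set $D'_{ij}:=\mathrm{cl}_D(D_i\cup D_j)$, $D'_i:=D_i$, $D':=\bigcup_{i<j}D'_{ij}$. Using $D_{ij}\le D$ (so that $\mathrm{cl}_{D_{ij}}(D_i\cup D_j)=D'_{ij}$), Lemmas \ref{basicleq} and \ref{indepfree}, and Remark \ref{moreind}, one checks that $(D';D'_i;D'_{ij})$ is an ITD with $D'_{ij}=\mathrm{cl}_{D'}(D'_i\cup D'_j)$ and $D'_{ij}\le D'$, $D'_{ij}\le D_{ij}$. If $D'\subsetneq D$, then $D'\in\mathcal{K}_f$ by the induction hypothesis; the intersection axioms of the ITD force the vertex set of $D$ to split as $D'\sqcup\bigsqcup_{i<j}(D_{ij}\setminus D'_{ij})$ with every edge contained in some $D_{ij}$, so $D$ is obtained from $D'$ by successively freely amalgamating $D_{12},D_{13},D_{23}$ over $D'_{12},D'_{13},D'_{23}$ --- bases over which the pertinent factors are $d$-closed --- and the free amalgamation property delivers $D\in\mathcal{K}_f$. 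If instead $D'=D$, then $D_{ij}=\mathrm{cl}_D(D_i\cup D_j)$ for all $i,j$, and improperness --- including the case $D_0=D_i$, which then forces $D_{ij}=D_j$ for both pairs containing $i$ --- means $D_i\cup D_j=D_{ij}$ for some $\{i,j\}$; hence $D_{ij}=D_i\cup D_j\subseteq D_{ik}\cup D_{jk}$, so $D=D_{ik}\cup D_{jk}$, and since $D_{ik},D_{jk}$ are freely amalgamated over $D_k$ (from the intersection axioms together with the independence of $D_i,D_j$ over $D_0$ in $D_{ij}$) with $D_k\le D_{ik},D_{jk}$, we get $D=D_{ik}\amalg_{D_k}D_{jk}\in\mathcal{K}_f$. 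Vertices of $D$ in no $D_{ij}$ are isolated and harmless to adjoin since $f$ grows slowly.

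The main obstacle is this improper case, and specifically the bookkeeping it involves: showing the normalized $D'$ is genuinely an ITD --- the delicate clause being independence of $D'_i,D'_j$ over $D'_0$ in $D'_{ij}$, which via Lemma \ref{indepfree} needs $\delta(D_i\cup D_j)=\delta(D'_{ij})$, itself extracted from independence of $D_i,D_j$ over $D_0$ in $D_{ij}$ and the identity $\mathrm{cl}_{D_{ij}}(D_i\cup D_j)=\mathrm{cl}_D(D_i\cup D_j)=D'_{ij}$ --- and showing that the pieces $D_{ij}\setminus D'_{ij}$ are pairwise disjoint, disjoint from $D'$, and attach with no new edges, so that the iterated free amalgamation genuinely reconstructs $D$ and the $d$-closedness hypotheses needed to apply the free amalgamation property hold at each step.
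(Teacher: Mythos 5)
Your proof is correct and follows essentially the same route as the paper's: the nontrivial direction rests on the same ingredients --- the restriction of an ITD to a subgraph is again an ITD, the degenerate cases $D_0=D_i$ and $D_{ij}=D_i\cup D_j$ reduce to free amalgamations over $d$-closed bases, and the normalization $D'_{ij}=\mathrm{cl}_D(D_i\cup D_j)$ reduces the remaining case to a proper ITD. The only difference is organizational: you package everything as a single induction on $\vert D\vert$ proving $(3)\Rightarrow(1)$ directly, whereas the paper runs the same decomposition once to get $(2)\Rightarrow(1)$ and then obtains $(3)\Rightarrow(2)$ by a minimal-counterexample argument.
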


\begin{proof}
(1)$\Rightarrow$(2)$\Rightarrow$(3) follows from the definitions of strong ITD and $\mathcal{K}_f$. Let us focus then on (2)$\Rightarrow$(1). Let $D$ be an ITD. For it to be in $\mathcal{K}_f$ we need to show that for any $C\subseteq D, f(\vert C\vert )\leq\delta(C)$. Note that if $f(\vert C\vert )>\delta(C)$, also $\mathrm{cl}_D(C)$ satisfies this condition since
\[f(\vert \mathrm{cl}_D(C)\vert )\geq f(\vert C\vert )>\delta(\vert C\vert )\geq \delta(\mathrm{cl}_D(C)),\]
where the first inequality holds since $f$ is increasing and the last by definition of closure. Hence, we may assume that $C\leq D$.
We shall show that if $C$ is not already a proper ITD, it must already be in $\mathcal{K}_f$, or that there is a proper ITD $C'$, such that if $C'$ is in $\mathcal{K}_f$, then so is $C$. 

We label the intersections of $C$ with $D$ so that $C_i:=D_i\cap C$ and $C_{ij}:=D_{ij}\cap C$. By the previous lemma $C$ is an independence theorem diagram. 

Suppose that $C_1=C_0$, then $C$ is in $\mathcal{K}_f$, being obtained by freely amalgamating $C_{12}$ and $C_{23}$ over $C_2$, and then freeely amalgamating this structure with $C_{13}$ over $C_3$. Hence, if $C_i=C_0$ for any $1\leq i\leq 3$, $C\in\mathcal{K}_f$. 

Suppose that $C_{12}=C_1\cup C_2$. Then, $C$ is the free amalgamation of $C_{23}$ and $C_{13}$ over $C_3$. Thus, if $C_{ij}=C_i\cup C_j$, then $C\in\mathcal{K}_f$.

Finally, consider the subgraph of $C'$ of $C$ which constitutes the proper independence theorem diagram obtained by taking $C'_{ij}:=\mathrm{cl}^C(C_i\cup C_j)$. Now, by construction and definition of closure, we must have $C'_{ij}\leq C_{ij}$. Furthermore, $C'_{ij}\leq C'$ 
Hence, $C$ may be obtained as a free amalgamation of $C'$ and $C_{ij}$ over $C'_{ij}$ (eventually repeating this operation for the different $C_{ij}$'s). 

So, we have seen that if all proper ITDs are in $\mathcal{K}_f$, then so are all ITDs, and so (2)$\Rightarrow$(1). We prove that (3)$\Rightarrow$(2) holds by considering a minimal counterexample. Suppose that $D$ is a minimal proper ITD not in $\mathcal{K}_f$. Since $f(\vert D\vert )\leq \delta(D)$, there must be some $C\subset D$ such that $f(\vert C\vert )>\delta(\vert C\vert )$.
 Again, we may assume that $C\leq D$. By the previous lemma, $C$ is an ITD. From the steps above, we know that either it will be either a free amalgamation of graphs in $\mathcal{K}_f$, or the free amalgamation of graphs in $\mathcal{K}_f$ and some proper ITD, or it will be a proper ITD again. But, by minimality, any of these cases implies that $C\in\mathcal{K}_f$. So, it is sufficient to check the condition $f(\vert D\vert )\leq \delta(D)$ in proper ITDs. 
\end{proof}

\begin{lemma}\label{SITDcond} Let $\alpha\in\mathbb{N}$, $\mathcal{K}_f$ be a free amalgamation class. Let $n_1\in\mathbb{N}$. Suppose that for $n_1\leq t$, $f(3t)\leq f(t)+k$, for fixed $k\in \mathbb{N}$. Let $D$ be an ITD. Without loss of generality, say that $D_{12}$ is the $D_{ij}$ of biggest predimension, and call its predimension $d_{12}$. Suppose that $f(n_1) \leq d_{12}$. We have that if $\delta(D)\geq d_{12}+k$, then $f(\vert D\vert )\leq \delta(D)$. 
\end{lemma}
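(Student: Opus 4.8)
The plan is to bound $|D|$ in terms of the predimensions $d_{12}$ of $D_{12}$ and $\delta(D)$, and then to feed this bound into the hypothesis $f(3t)\le f(t)+k$ for $t\ge n_1$. First I would recall the structural inequalities for ITDs available from the earlier lemmas: $D_{ij}\le D$, $D_{ij}\le D_{ij}\cup D_{jk}$, and $D_{ij}\cup D_{jk}\le^* D$. In particular $\delta(D_{ij})\le\delta(D)$ for each $i<j$ and $\delta(D_{ij})\le\delta(D_{ij}\cup D_{jk})\le\delta(D)$. The key counting step is to estimate $|D|$ from above: since $D$ is covered by $D_{12}$, $D_{23}$, $D_{13}$ (every edge lies in one of them, hence every vertex too, as isolated vertices of the $D_i$ are already inside the $D_{ij}$'s), and these overlap along the $D_i$'s with $D_i\cap D_j=D_0$, inclusion–exclusion gives
\[
|D| = |D_{12}| + |D_{23}| + |D_{13}| - |D_1| - |D_2| - |D_3| + |D_0|.
\]
Using $|D_i|\ge|D_0|$ and $|D_{ij}|\le |D_{12}|$-type comparisons is too crude; instead I would pass to predimensions, where submodularity and the freeness of the amalgamations $D_i\cup D_j$ inside $D_{ij}$ give exact equalities $\delta(D_i\cup D_j)=\delta(D_i)+\delta(D_j)-\delta(D_0)$, and then control $|D_{ij}|$ via $|D_{ij}|\le f^{-1}(\delta(D_{ij}))$...

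Actually the cleaner route, which I would take, is: since $D_{12}\cup D_{23}\le^* D$ and $D_{13}\le D$, $D$ is the free amalgamation of $D_{12}\cup D_{23}$ (closed) and $D_{13}$ over $D_{13}\cap(D_{12}\cup D_{23})$, which contains $D_1\cup D_3$; submodularity then gives $\delta(D)\ge \delta(D_{12}\cup D_{23}) + \delta(D_{13}) - \delta(D_1\cup D_3)$, and iterating these decompositions expresses $\delta(D)$ in terms of the pieces. Meanwhile $|D| \le |D_{12}\cup D_{23}| + |D_{13}| - |D_1\cup D_3|$ and each $|D_{ij}|\le f^{-1}(\delta(D_{ij})) \le f^{-1}(\delta(D))$. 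The decisive estimate is the crude but sufficient $|D|\le 3\cdot f^{-1}(\delta(D))$: each of $D_{12},D_{23},D_{13}$ has $|D_{ij}|\le f^{-1}(d_{12})$ since $d_{12}$ is the largest of the three predimensions and $D_{ij}\in\mathcal{K}_f$, and they cover $D$, so $|D|\le 3 f^{-1}(d_{12})$. Now apply $f$: writing $t_0 = f^{-1}(d_{12})$ we have $t_0\ge n_1$ because $f(n_1)\le d_{12}$ and $f$ is increasing, so $f(3t_0)\le f(t_0)+k = d_{12}+k$. Hence
\[
f(|D|) \le f(3 f^{-1}(d_{12})) \le d_{12}+k \le \delta(D),
\]
the last inequality being exactly the hypothesis $\delta(D)\ge d_{12}+k$, and the middle inequality using monotonicity of $f$ together with $|D|\le 3 f^{-1}(d_{12})$.

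The main obstacle I anticipate is making the covering/counting argument fully rigorous: showing that $D_{12}\cup D_{23}\cup D_{13}=D$ as vertex sets (not just edge sets) requires the observation that an ITD has no vertices outside the $D_{ij}$'s — this follows since every $D_i\subseteq D_{0j}$-type containment places the vertices of each $D_i$ inside some $D_{ij}$, and any remaining vertex of $D$ not in any $D_{ij}$ would be an isolated point, which one can absorb into, say, $D_{12}$ without changing predimensions, or simply exclude by noting $D=\mathrm{cl}_D(\bigcup D_{ij})$ forces no such point in a minimal counterexample. A second, more quantitative subtlety is whether $|D|\le 3f^{-1}(d_{12})$ is genuinely what is needed or whether overlaps force us to subtract and thus could only help — since we are proving an upper bound on $|D|$, ignoring the subtracted overlap terms is safe, so the crude bound suffices. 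Finally one must be slightly careful that $f^{-1}$ is well-defined and monotone, which holds since $f$ is continuous, increasing and unbounded with $f(0)=0$; and that $3 f^{-1}(d_{12})$ lands in the range $t\ge n_1$ where the growth hypothesis $f(3t)\le f(t)+k$ applies, which is precisely guaranteed by $f(n_1)\le d_{12}$.
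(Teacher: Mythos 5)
Your final argument is exactly the paper's proof: cover $D$ by the three $D_{ij}$, bound each $\vert D_{ij}\vert\le f^{-1}(d_{12})$ using $D_{ij}\in\mathcal{K}_f$ and maximality of $d_{12}$, apply the growth hypothesis at $t=f^{-1}(d_{12})\ge n_1$ (guaranteed by $f(n_1)\le d_{12}$), and conclude via $\delta(D)\ge d_{12}+k$; the paper merely phrases the same chain through $g=f^{-1}$ as $3g(x)\le g(x+k)$. The exploratory detours about inclusion--exclusion and free-amalgamation decompositions are correctly discarded, and the implicit assumption that the $D_{ij}$ cover the vertex set of $D$ is one the paper's own proof also makes.
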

\begin{proof}
This proof is substantially identical to the final part of the Theorem 3.6 in \cite{supersimple}. However, we repeat the argument for completeness and in order to avoid confusion with Remark 3.8 \cite{supersimple}, which follows the theorem and contains a mistake.

Let $g$ be the inverse of $f$. Making the substitution $g(x)=t$ into $f(3t)\leq f(t)+k$, we get that $3g(x)\leq g(x+k)$ for $f(n_1)\leq x$. Then, for $f(n_1)\leq d_{12}$:

\[ \vert D\vert \leq \sum_{1\leq i<j\leq3} \vert D_{ij}\vert  \leq \sum_{1\leq i<j\leq3} g(\delta(D_{ij})) \leq 3 g(d_{12})\leq g(d_{12}+k)\leq g(\delta(D)) \]
Where the second inequality holds since $D_{ij}\in\mathcal{K}_f$ and the fourth holds since $\delta(D)\geq d_{12}+k$ and $g$ is increasing. Note that the resulting inequality is equivalent to $f(\vert D\vert )\leq \delta(D)$. 
\end{proof}

Hence, we have a method to obtain supersimple $\omega$-categorical Hrushovski constructions of finite $SU$-rank. And now we know how to verify this more easily.

\subsection{The structures \texorpdfstring{$\mathcal{M}_f$}{Mf} and their supersimplicity}\label{example}

In this subsection, we focus on our choice of structures $\mathcal{M}_f$  built as specified in Construction \ref{actualf}. We prove that these structures have the various properties mentioned in the main article. \\

We begin by noting that $\mathcal{K}_f$ is a free amalgamation class and so the structures $\mathcal{M}_f$ may be built according to Theorem \ref{Hrushconstrtheorem}. 

\begin{prop}\label{freeam} The class $\mathcal{K}_f$ is a free amalgamation class. 
\end{prop}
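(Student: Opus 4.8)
The plan is to verify the free amalgamation property for $\mathcal{K}_f$ by invoking Lemma \ref{showfreeam}, which reduces the task to a finite check. Since $f$ is required to be a good function for $t \geq 6$ (and in fact $f(3t) \leq f(t) + 1$ there), Lemma \ref{showfreeam} tells us that it suffices to show: whenever $A_0 \leq A_1, A_2$ with $|A_i| \leq 6$ and $A_1 \amalg_{A_0} A_2 =: G$, we have $\delta(A_0 \amalg_{A_1} A_2) \geq f(|G|)$ — i.e., the free amalgam lies in $\mathcal{K}_f$. (I am reading the hypothesis of Lemma \ref{showfreeam} with $t = 6$, matching Construction \ref{actualf}.) So the whole proposition comes down to a bounded combinatorial verification over graphs of size at most $6$.

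First I would record the needed facts about $f$ on $[0,6]$: $f$ is piecewise linear with $f(1)=2$, $f(4)=5$, $f(6)=6$, hence (interpolating) $f(2)=3$, $f(3)=4$, $f(5) \approx 5.5$, and more generally $f$ is increasing and concave on this range with $f(x) > x$ for $x < 6$ and $f(6)=6$. The key consequences are: every one-vertex graph and every edge lies in $\mathcal{K}_f$ (since $\delta(\{v\}) = 2 \geq f(1) = 2$ and $\delta(K_2) = 3 \geq f(2) = 3$), no graph in $\mathcal{K}_f$ contains a cycle of length $< 6$ (a $k$-cycle $C_k$ has $\delta(C_k) = 2k - k = k < f(k)$ for $k < 6$, so $C_k \notin \mathcal{K}_f$), and any subgraph $X$ with $|X| < 6$ satisfies $|E(X)| < |X|$ if $X \in \mathcal{K}_f$ (as derived in Lemma \ref{iterate1ext}, equation \eqref{e<x}). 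Then the submodularity of $\delta$ with equality for free amalgams gives, for any $B \subseteq G = A_1 \amalg_{A_0} A_2$, writing $B_i = B \cap A_i$ and $B_0 = B \cap A_0$: $\delta(B) = \delta(B_1 \cup B_2) \leq \delta(B_1) + \delta(B_2) - \delta(B_0)$, but more usefully, since $B_1$ and $B_2$ are freely amalgamated over $B_0$ inside $G$, in fact $\delta(B) = \delta(B_1) + \delta(B_2) - \delta(B_0)$.

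With these in hand, the core estimate is: for every subgraph $B \subseteq G$ I must show $\delta(B) \geq f(|B|)$. Using $A_0 \leq A_1$ we get $\delta(B_0) \leq \delta(B_1)$ whenever $B_0 \subsetneq B_1$ (d-closure), hence $\delta(B) = \delta(B_2) + (\delta(B_1) - \delta(B_0)) \geq \delta(B_2) \geq f(|B_2|)$, and symmetrically $\delta(B) \geq f(|B_1|)$; combined with $\delta(B) = \delta(B_1) + \delta(B_2) - \delta(B_0)$ one pushes the bound up. The delicate cases are exactly those where $|B|$ is close to $6$ or just above, since that is where $f$ is steepest relative to what the additive bound yields; here one uses that $A_1, A_2 \in \mathcal{K}_f$ together with the no-short-cycles fact to control $|E(G)|$. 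Concretely, since $|B_i| \leq 6$ one has $|E(B_i)| \leq |B_i|$ with equality only if $B_i$ contains a $6$-cycle, and $|E(B_0)| \geq$ the number of edges forced inside the intersection; plugging into $\delta(B) = 2|B| - |E(B)|$ and $|E(B)| = |E(B_1)| + |E(B_2)| - |E(B_0)|$ gives $\delta(B) \geq f(|B|)$ after checking the handful of extremal configurations (e.g. $G$ a $6$-cycle split as two paths, or $G$ of size $7$ through $10$ built by gluing two small graphs along a common piece). I expect the main obstacle to be precisely this case analysis near $|B| \in \{5,6,7\}$: one must rule out that gluing two pieces of $\mathcal{K}_f$ creates a short cycle or a subgraph of deficient predimension, which is where the specific values $f(3)=4$, $f(4)=5$, $f(6)=6$ (forcing girth $\geq 6$) do the real work. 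Once the bounded check is complete, Lemma \ref{showfreeam} delivers the free amalgamation property, and hence $\mathcal{M}_f$ exists by Theorem \ref{Hrushconstrtheorem}.
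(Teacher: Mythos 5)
Your first step is the same as the paper's: invoke Lemma \ref{showfreeam} with $t=6$ to reduce the proposition to a finite check. But two things go wrong after that. First, you misread the hypothesis of Lemma \ref{showfreeam}: it asks only for the single inequality $\delta(A_1\amalg_{A_0}A_2)\geq f(\vert A_1\amalg_{A_0}A_2\vert)$ when $\vert A_i\vert\leq 6$, not that the amalgam lies in $\mathcal{K}_f$ (the lemma handles subgraphs internally). By conflating the two you commit yourself to verifying $\delta(B)\geq f(\vert B\vert)$ for \emph{every} subgraph $B$ of the amalgam, which is unnecessary and considerably harder. Second, and more seriously, the verification is never actually carried out. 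The bound you derive, $\delta(B)\geq\max\{f(\vert B_1\vert),f(\vert B_2\vert)\}$, cannot suffice on its own, since $\vert B\vert$ can be as large as $12$ while $\vert B_i\vert\leq 6$ and $f$ is increasing; the phrases ``one pushes the bound up'' and ``checking the handful of extremal configurations'' are exactly where the content of the proposition lives, and they are left blank.

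The paper's check is cleaner and worth comparing against. Since $\vert A_1\amalg_{A_0}A_2\vert=\vert A_1\vert+\vert A_2\vert-\vert A_0\vert$ and $\delta(A_1\amalg_{A_0}A_2)=\delta(A_1)+\delta(A_2)-\delta(A_0)$, the point $(\vert G\vert,\delta(G))$ for $G=A_1\amalg_{A_0}A_2$ is the fourth vertex of the parallelogram spanned by the three realizable points $(\vert A_i\vert,\delta(A_i))$, each lying on or above the graph of $f$, with the $A_0$-point strictly below and to the left of the other two whenever $A_0\subsetneq A_1,A_2$ (by d-closure). One then only has to check that this fourth vertex stays above $f$, using that $\delta$ is integer-valued (so $\delta(A_i)-\delta(A_0)\geq 1$), that the only values $(\vert X\vert,\delta(X))$ realized by members of $\mathcal{K}_f$ with $\vert X\vert\leq 6$ are the finitely many dots of Figure \ref{fig:newf} (so, e.g., $\vert A_0\vert=1$ forces $\delta(A_0)=2$), and that $f(18)\leq f(6)+1=7$, so any amalgam with $\delta(G)\geq 7$ passes automatically and only a handful of low-predimension configurations remain. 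To repair your write-up, drop the subgraph analysis and carry out this single parallelogram verification explicitly.
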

\begin{proof}

Since $f$ is a good function for $t\geq 6$, by Lemma \ref{showfreeam}, we just need to show that for $A_0\leq A_1, A_2\in\mathcal{K}_f$, then $A_1\amalg_{A_0}A_2\in\mathcal{K}_f$ for $\vert A_i\vert \leq 6$. Note that $\vert A_1\amalg_{A_0}A_2\vert =\vert A_1\vert +\vert A_2\vert -\vert A_0\vert $, and $\delta(A_1\amalg_{A_0}A_2)=\delta(A_1)+\delta(A_2)-\delta(A_0)$. So, in the context of Figure \ref{fig:newf}, it is sufficient to check that given any three dots $p,q$, and $r$ (possibly with $q=r$) lying above $f$, with $p=(p_1,p_2), q=(q_1,q_2), r=(r_1,r_2)$ such that $p_1<q_1, r_1$ and $p_2<q_2, r_2$, the fourth vertex of the parallelogram with edges $\overline{pq}$ and $\overline{pr}$ is still above the function $f$. Since $f(18)\leq 7$ and our function is increasing, this can be easily verified.
\end{proof}

Now we prove that $\mathcal{M}_f$ is supersimple of $SU$-rank 2. We proceed step by step by proving that for any proper ITD $D$ for $\mathcal{K}_f$, $f(\vert D\vert )\leq \delta(D)$. Hence, the proof will follow by Lemma \ref{strongITD} and Theorem \ref{supersimpth}. We shall adopt the notation we already set for proper ITDs. To avoid confusion when speaking of the various element of an ITD, we always write $D$ for the proper ITD, we write $D_{ij}$ only for $i,j\in\{1,2,3\}$, and consistently write  $D_j$ for $D_{0j}$. Furthermore, given $D$, we shall assume without loss of generality that $D_{12}$ has maximal dimension among the $D_{ij}$. We shall also write $d_i$ for $\delta(D_i)$, $d_{ij}$ for $\delta(D_{ij})$ and $d$ for $\delta(D)$ to simplify our notation.

\begin{lemma} For all proper ITDs $D$ for $\mathcal{K}_f$ with $d_{12}\leq 4$, $f(\vert D\vert )\leq \delta(D)$. For this proof we only need to assume that $f(1)=2, f(2)=3, f(3)=4$ and $f(6)\leq 6$.
\end{lemma}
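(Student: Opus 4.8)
The plan is to exploit the characterization of proper ITDs together with the very restrictive values of $f$ below $6$. Since we are told to assume only $f(1)=2$, $f(2)=3$, $f(3)=4$, $f(6)\leq 6$, and since the hypothesis is $d_{12}\leq 4$, I would first bound the sizes of the individual pieces. Because each $D_{ij}\in\mathcal{K}_f$ and $d_{ij}\leq d_{12}\leq 4$, the inequality $\delta(A')\geq f(|A'|)$ forces $|D_{ij}|\leq f^{-1}(4)=3$; in particular every $D_{ij}$ has at most $3$ vertices. Likewise each $D_i$ (and $D_0$) has predimension $\le 4$, so $|D_i|\leq 3$. Since $D$ is a \emph{proper} ITD we have $D_0\subsetneq D_i$ for each $i$, so $1\le|D_0|$ and $|D_i|\geq|D_0|+1$, and $D_i\cup D_j\subsetneq D_{ij}$ with $|D_{ij}|\le 3$, so $|D_i\cup D_j|\le 2$, forcing $|D_i|=1=|D_0\,{\cup}\,$\ldots and hence $|D_0|<|D_i|\le 2$ together with $|D_i\cup D_j|\le 2$. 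A short case analysis on $|D_0|\in\{1,2\}$ (and correspondingly $|D_i|\le 2$) pins down the possible shapes of $D$ up to isomorphism.

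Next I would compute $|D|$ and $\delta(D)$ directly in the (very few) surviving cases. Using that any edge of $D$ lies inside some $D_{ij}$, and that $D_i,D_j$ are freely amalgamated over $D_0$ inside $D_{ij}$ with $\delta(D_i\cup D_j)=\delta(D_{ij})$ whenever $D_{ij}=\mathrm{cl}_D(D_i\cup D_j)$ — but here $D_i\cup D_j\subsetneq D_{ij}$, so $D_{ij}$ is a one-point closure of the free amalgam and, since $|D_{ij}|\le 3$ and we avoid cycles of length $<6$, this one extra vertex is joined to exactly the two vertices of $D_i\cup D_j$. Thus each nontrivial $D_{ij}$ is (a copy of) the path of length two, $|D_i|=|D_j|=|D_0|=1$, and $D$ is assembled from at most three such paths glued along their endpoints $D_1,D_2,D_3$ (three vertices total among the $D_i$) with up to three extra midpoints, and inclusion-exclusion on vertices and edges gives $|D|\le 6$ and $\delta(D)=2|D|-|E(D)|\ge 6\ge f(|D|)$ since $f(|D|)\le f(6)\le 6$ for $|D|\le 6$. (The degenerate cases where some $D_i=D_0$ or some $D_{ij}=D_i\cup D_j$ are excluded by properness, which is exactly why we may restrict to proper ITDs; if one still wants to handle them, $D$ is then a genuine free amalgamation of members of $\mathcal{K}_f$ and lies in $\mathcal{K}_f$ automatically.)

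The one delicate point — and the step I expect to be the main obstacle — is being sure the enumeration of possible proper ITDs with $d_{12}\le4$ is exhaustive: one must check that the constraints $D_i\cap D_j=D_0$, $D_{ij}\cap D_{jk}=D_j$, "every edge sits in some $D_{ij}$", and properness together leave only the handful of glued-paths configurations, and in particular that no configuration sneaks in a cycle of length $3$, $4$, or $5$ (which would put $D\notin\mathcal{K}_f$ and sink the lemma). Here one uses that $\mathcal{K}_f$ contains no such short cycles (because $f(3)=4$, $f(4)=5$, $f(6)=6$ force the shortest cycle to be a $6$-cycle), so any edge appearing in $D$ is forced by the "edge inside some $D_{ij}$" clause and the midpoint of a one-point closure can only attach to the two endpoints it came from; combined with $|D|\le 6$ this rules out short cycles. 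Once the finitely many diagrams are listed, verifying $f(|D|)\le\delta(D)$ in each is a one-line arithmetic check against $f(1)=2$, $f(2)=3$, $f(3)=4$, $f(6)\le6$, and the lemma follows by Lemma \ref{strongITD}(3) restricted to this range of $d_{12}$.
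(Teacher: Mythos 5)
Your overall strategy is the same as the paper's: use $d_{ij}\le d_{12}\le 4$ together with $f(3)=4$ to force $\vert D_{ij}\vert\le 3$, then let properness squeeze the diagram down to a single configuration (three paths of length two glued along the three vertices $D_1,D_2,D_3$, i.e.\ a $6$-cycle), and finish with $\delta(D)=6\ge f(6)$. The endpoint and the arithmetic are right.

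However, the middle of your argument contains a genuine error. Properness gives $D_0\subsetneq D_i$, which yields $\vert D_i\vert\ge\vert D_0\vert+1$ but emphatically \emph{not} $1\le\vert D_0\vert$; and in fact the size constraints force $\vert D_0\vert=0$. Indeed, $D_i\cup D_j\subsetneq D_{ij}$ and $\vert D_{ij}\vert\le 3$ give $\vert D_i\vert+\vert D_j\vert-\vert D_0\vert\le 2$, while $\vert D_i\vert,\vert D_j\vert\ge\vert D_0\vert+1$ gives $\vert D_i\vert+\vert D_j\vert-\vert D_0\vert\ge\vert D_0\vert+2$; together these force $\vert D_0\vert=0$, $\vert D_i\vert=1$ and $\vert D_{ij}\vert=3$. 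Your proposed case analysis on $\vert D_0\vert\in\{1,2\}$ is therefore based on a false premise (and, pursued literally, would make the lemma vacuous by finding no admissible $D$ at all, missing the $6$-cycle, which is a genuine proper ITD with $D_0=\emptyset$); your later assertion $\vert D_i\vert=\vert D_j\vert=\vert D_0\vert=1$ directly contradicts $D_0\subsetneq D_i$. Once $\vert D_0\vert=0$ is in hand the rest of your reasoning goes through, and you do not need the "no cycles of length $<6$" input (which anyway uses $f(4)=5$, outside the stated minimal assumptions): $\delta(D_{ij})=\delta(D_i)+\delta(D_j)-\delta(D_0)=4$ already forces each $D_{ij}$ to be a path of length two with the extra vertex as midpoint, and gluing the three paths along $D_1,D_2,D_3$ yields exactly the $6$-cycle with $\vert D\vert=6$ and $\delta(D)=6\ge f(6)$.
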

\begin{proof}
For $d_{12}\leq 4$, $\vert D_{12}\vert \leq 3$, and so $\vert D_{ij}\vert \leq 3$. Since we require $D_i, D_j\neq \emptyset$ and $D_i\cup D_j\subsetneq D_{ij}$ (for $i\neq j\in\{1,2,3\}$), we must have that $\vert D_{ij}\vert =3$ for each $i\neq j\in\{1,2,3\}$, $\vert D_i\vert =1$ for each $i\in\{1,2,3\}$, and $\vert D_0\vert =0$. There is only one graph satisfying these requirements, i.e. a $6$-cycle as shown in Figure \ref{fig:hexagon}.
\begin{figure}[H]
    \centering
  \includegraphics[]{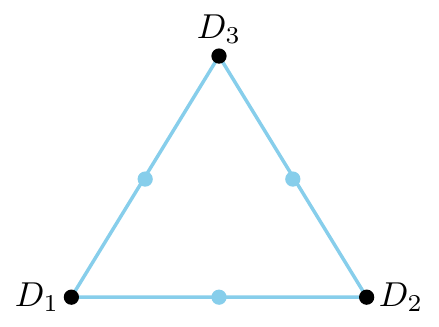}
 \caption{Each $D_i$ for $1\leq i\leq 3$ is a vertex. We can see that each $D_{ij}$ must be a copy of a path of length 2 since $\vert D_{ij}\vert =3$ and $d_{ij}\leq 4$. The resulting proper ITD is a $6$-cycle.}
    \label{fig:hexagon}
\end{figure}
\end{proof}

\begin{lemma} For all proper ITDs $D$ for $\mathcal{K}_f$ with $d_{12}\leq 5$, $f(\vert D\vert )\leq\delta(D)$. We only need to assume $f(1)=2, f(2)=3, f(3)=4, f(4)=5$, and $f(6)\leq 6$, and $f(12)\leq 7$.
\end{lemma}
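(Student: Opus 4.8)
The plan is to follow the same strategy as the previous lemma (the $d_{12}\le 4$ case): enumerate all proper ITDs $D$ for which the largest piece $D_{12}$ has predimension at most $5$, and check directly that $f(\lvert D\rvert)\le\delta(D)$ for each. By Lemma \ref{strongITD} and Theorem \ref{supersimpth} this, together with the other cases, gives supersimplicity of $\mathcal{M}_f$. So the real content is combinatorial: classify the finitely many graphs involved.

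First I would split on the size of $D_{12}$. Since $d_{12}\le 5$ and $D_{12}\in\mathcal{K}_f$, we have $\lvert D_{12}\rvert\le 4$; since $D_i\cup D_j\subsetneq D_{ij}$ and each $D_i\neq\emptyset$, either $\lvert D_{12}\rvert=3$ (which forces $d_{12}\le 4$, already handled by the previous lemma) or $\lvert D_{12}\rvert=4$. So assume some $D_{ij}$ has size $4$. Then $\lvert D_i\rvert+\lvert D_j\rvert-\lvert D_0\rvert\le 3$, so $\lvert D_0\rvert\le 1$ and the $D_i$ are small. I would organize the case analysis by $\lvert D_0\rvert\in\{0,1\}$ and by the sizes of the $D_i$, noting that $D_i\le D_{ij}$ with $\delta(D_{ij})=5$ forces $\delta(D_i)\le 5$ hence $\lvert D_i\rvert\le 4$, and that $D_i$ and $D_j$ are freely amalgamated over $D_0$ inside $D_{ij}$ with $\delta(D_i\cup D_j)=\delta(D_{ij})$ (by the definition of proper ITD and Lemma \ref{indepfree}, noting $D_i\cup D_j\le^* D_{ij}$). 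This last equation pins down $\lvert D_i\rvert$ once $\lvert D_{ij}\rvert$ is known; e.g. if $\lvert D_0\rvert=0$ then $\lvert D_i\rvert=\lvert D_j\rvert$ is forced once we know $\delta(D_i),\delta(D_j)$ sum appropriately, and similarly for $\lvert D_0\rvert=1$. Because $\mathcal{K}_f$ contains no cycles of length $<6$ (from $f(3)=4$, $f(4)=5$, $f(6)=6$), the possible $D_{ij}$ of size $4$ are just paths of length $3$ and forests; this drastically cuts the list of graphs to check. For each resulting $D$ I would compute $\lvert D\rvert$ (bounded by $\sum_{i<j}\lvert D_{ij}\rvert-(\text{overlaps})$, but more precisely read off the explicit graph) and $\delta(D)=\sum \text{(vertex contributions)}-\lvert E(D)\rvert$, then verify $f(\lvert D\rvert)\le\delta(D)$ using the stated values $f(1)=2$, $f(2)=3$, $f(3)=4$, $f(4)=5$, $f(6)\le 6$, $f(12)\le 7$ together with monotonicity of $f$. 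In the worst cases $D$ can be as large as having all three $D_{ij}$ of size $4$ with minimal overlap, giving $\lvert D\rvert$ up to around $12$, whence the hypothesis $f(12)\le 7$ is used; one checks $\delta(D)\ge 7$ in those configurations since each extra edge beyond a spanning forest costs predimension and the no-small-cycle condition limits how many edges such a $D$ can have.

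The main obstacle I anticipate is not any single inequality but the bookkeeping: making sure the enumeration of proper ITDs with $d_{12}\le 5$ is genuinely exhaustive, in particular not overlooking ITDs where $D_0$ is a single vertex (so some $D_{ij}$ can be connected through $D_0$) or where two of the $D_{ij}$ are isomorphic, and handling the degenerate-looking-but-still-proper cases where $\delta(D)$ is only just large enough. A clean way to keep this under control is to first reduce, exactly as in the proof of Lemma \ref{strongITD}, to $D$ itself being "tight" (no proper $C\le D$ with $f(\lvert C\rvert)>\delta(C)$), and then to argue that any such $D$ either already appears among the finitely many pictures one draws or else contains a cycle of length $<6$ — contradiction. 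I would present the argument with a small number of figures, analogous to Figure \ref{fig:hexagon}, displaying the extremal ITDs (a path of length $3$ amalgamated with itself and with a third path, etc.) and then remark that all remaining configurations have strictly larger $\delta(D)$ or strictly smaller $\lvert D\rvert$, so the inequality only gets easier.
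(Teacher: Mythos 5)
Your overall strategy --- reduce to $d_{12}=5$ (equivalently $\lvert D_{12}\rvert=4$), exploit the absence of short cycles to see that each $D_{ij}$ of size $4$ is a path of length $3$, and then enumerate --- is viable and is in the same spirit as the paper's proof, which also reduces to a finite check. Your observation that $\lvert D_0\rvert\le 1$ is correct and in fact sharper than what the paper uses (its $d_0=3$ case is vacuous). But there is a genuine gap at exactly the point where the work lies. The only delicate situation is $\delta(D)=6$: since $f$ is strictly increasing past $6$ with $f(6)=6$, one has $f(7)>6$, so for such $D$ you must prove the \emph{exact} bound $\lvert D\rvert\le 6$, not merely "small enough". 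Your proposal never establishes this. Instead it splits into "small configurations I would draw" versus "worst cases with $\lvert D\rvert$ up to around $12$ where one checks $\delta(D)\ge 7$", and the assertion that the large configurations all have $\delta(D)\ge 7$ is precisely the unproven borderline claim: a priori a proper ITD with three size-$4$ pieces could have $\delta(D)=6$, and then $f(12)\le 7$ is useless. (Also, the maximum of $\sum\lvert D_{ij}\rvert-\sum\lvert D_i\rvert+\lvert D_0\rvert$ is $12-3+0=9$, not $12$; and the closing remark that configurations with "strictly larger $\delta(D)$ or strictly smaller $\lvert D\rvert$" are easier is not a valid monotonicity argument, since increasing $\delta(D)$ typically comes with increasing $\lvert D\rvert$ and $f$ is increasing.)

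The paper closes this gap in two steps that your sketch is missing. First, Lemma \ref{SITDcond} with $n_1=4$, $k=2$ and the hypothesis $f(12)\le 7=f(4)+2$ disposes of every proper ITD with $\delta(D)\ge d_{12}+2=7$, so only $\delta(D)=6$ remains (predimensions are integers since $\alpha=2$, and $\delta(D)>d_{12}=5$ always). Second, for $\delta(D)=6$ it pins down the predimensions $d_0,d_i,d_{ij}$ from the constraints $d_3-d_0=d-d_{12}=1$ and $(d_1-d_0)+(d_2-d_0)+d_0=d_{12}$, and then bounds $\lvert D\rvert$ via the inclusion--exclusion identity $\lvert D\rvert=\sum\lvert D_{ij}\rvert-\sum\lvert D_i\rvert+\lvert D_0\rvert$ together with $\lvert D_{ij}\rvert\le\lfloor f^{-1}(d_{ij})\rfloor$ and lower bounds on $\lvert D_i\rvert$; this is what yields $\lvert D\rvert\le 6$ (note in particular that the pair of smallest predimension satisfies $\lvert D_{23}\rvert\le 3$, which your "all three pieces of size $4$" worst case ignores). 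If you want to keep your enumeration-by-pictures format, you still need an argument of this kind to certify that the list of $\delta(D)=6$ diagrams you draw is exhaustive and that none of them has seven or more vertices.
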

\begin{proof} Given the previous lemma, we only need to prove the condition for $d_{12}=5$.
Note that since $f(3\cdot 4)=f(12)\leq 7=f(4)+2$, by Lemma \ref{SITDcond} we only need to check the case of $d=6$.

 We have that $d_0>0$ since otherwise, the condition that $d-d_{12}=1$ forces $d_3=1$, which is impossible since no graph in $\mathcal{K}_f$ has predimension $1$. Hence, we need to check the cases of $d_0=2$ and $d_0=3$ (note that for $d_0>3$ we cannot have $d=6$ since $d>d_0+3$). By definition of an ITD, the following inclusion-exclusion formula holds:
\begin{equation}
    \vert D\vert =\sum_{1\leq i<j\leq3} \vert D_{ij}\vert -\sum_{1\leq i\leq3} \vert D_i\vert +\vert D_0\vert .
\end{equation}
Knowing the upper bounds for the sizes of the $D_{ij}$ and the lower bounds for the sizes of the $D_i$, we can come to an upper bound for $D$:
\begin{equation}
    \vert D\vert \leq \sum_{1\leq i<j\leq3} \lfloor f^{-1}(d_i+d_j-d_0) \rfloor -\sum_{1\leq i\leq3} \min\{\vert B\vert  \text{ s.t. } \delta(B)=d_i\}+\vert D_0\vert :=\beta
\end{equation}

Furthermore, note that $(d_1-d_0)+(d_2-d_0)+d_0=d_{12}$ and that $d_3-d_0=d-d_{12}=1$. 

For $d_0=2$, without loss of generality we have $d_1=4, d_2=3, d_3=3$ (since we must have $d_1-d_0=2, d_2-d_0=1, d_3-d_0=1$). Hence, we get that $\beta=4\cdot 2+3-2\cdot 3+1=6$, and so $\vert D\vert \leq 6$. Since $f(6)\leq 6$, $f(\vert D\vert )\leq 6=d$.

For $d_0=3$, we obtain that $d_1-d_0=1, d_2-d_0=1, d_3-d_0=1$. Since $d_0=3$ implies that $\vert D_0\vert =2$, $\vert D_i\vert \geq 3$ so $\beta=3\cdot 4-3\cdot 3+2=5$, and so $f(\vert D\vert )\leq f(5)<6=d$, as desired.
\end{proof}

\begin{theorem}\label{sups} Let $\mathcal{K}_f$ and $\mathcal{M}_f$ be as in Construction \ref{actualf}. Then, $\mathcal{M}_f$ is supersimple of finite $SU$-rank. In particular, it has $SU$-rank $2$.
\end{theorem}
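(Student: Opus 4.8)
The plan is to verify that $\mathcal{K}_f$ is closed under ITDs and then apply Theorem~\ref{supersimpth}. Since $\alpha=2\in\mathbb{N}$ and, by Proposition~\ref{freeam}, $\mathcal{K}_f$ is a free amalgamation class (so that $\mathcal{M}_f$ exists by Theorem~\ref{Hrushconstrtheorem}), all the hypotheses of Theorem~\ref{supersimpth} will be in place once closure under ITDs is established; by Lemma~\ref{strongITD} this reduces to checking $f(\vert D\vert)\leq\delta(D)$ for every proper ITD $D$. Fixing such a $D$ and relabelling so that $d_{12}$ is maximal among the $d_{ij}$, I would split into cases according to the value of $d_{12}$. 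The cases $d_{12}\leq 5$ are precisely the content of the two preceding lemmas, so only $d_{12}\geq 6$ remains.

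For $d_{12}\geq 6$ I would apply Lemma~\ref{SITDcond} with $n_1=6$ and $k=1$: by Construction~\ref{actualf} we have $f(3t)\leq f(t)+1$ for all $t\geq 6$, and $f(6)=6\leq d_{12}$. The only hypothesis left to verify is $\delta(D)\geq d_{12}+1$. Since $\alpha=2$, the predimension is integer-valued, so it suffices to see that $\delta(D)>\delta(D_{12})$, i.e. that $D_{12}$ is a proper d-closed subgraph of $D$. That $D_{12}\leq D$ is among the basic properties of ITDs recorded after Definition~\ref{ITDdef}; and $D_{12}\subsetneq D$ because $D_3\cap D_{12}\subseteq(D_{13}\cap D_{12})\cap(D_{23}\cap D_{12})=D_1\cap D_2=D_0\subsetneq D_3$, so $D_3\not\subseteq D_{12}$. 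Hence $d_{12}<\delta(D)$, Lemma~\ref{SITDcond} applies, and $f(\vert D\vert)\leq\delta(D)$. Combining all cases, $\mathcal{K}_f$ is closed under ITDs, and Theorem~\ref{supersimpth} yields that $\mathcal{M}_f$ is supersimple of finite $SU$-rank.

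It remains to pin the rank down to $2$. As recalled in Subsection~\ref{introH}, in the supersimple case the Hrushovski dimension on $\mathcal{M}_f$ agrees with $SU$-rank. A single vertex $v$ is d-closed in $\mathcal{M}_f$, since $f$ is increasing with $f(n)\geq f(2)=3>2=\delta(\{v\})$ for every $n\geq 2$; hence $SU(v)=\mathrm{d}(v)=\delta(\{v\})=\alpha=2$, and as all elements of $\mathcal{M}_f$ realise the same type over $\emptyset$ we conclude that $\mathcal{M}_f$ has $SU$-rank $2$. The only real bookkeeping here is confirming that the hypotheses of Lemma~\ref{SITDcond} genuinely hold for every proper ITD with $d_{12}\geq 6$ — in particular the integrality argument upgrading $\delta(D)>\delta(D_{12})$ to $\delta(D)\geq d_{12}+1$; the genuinely combinatorial work has already been discharged in the two preceding lemmas and in Lemma~\ref{SITDcond} itself, so no further case analysis is needed.
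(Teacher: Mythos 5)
Your proposal is correct and follows essentially the same route as the paper: the cases $d_{12}\leq 5$ are handled by the two preceding lemmas, and $d_{12}\geq 6$ by Lemma~\ref{SITDcond}, using that $\delta(D)>d_{12}$ for a proper ITD together with integrality of $\delta$. Your explicit verification that $D_{12}\subsetneq D$ (via $D_3\cap D_{12}\subseteq D_0\subsetneq D_3$) and the computation $SU(v)=\delta(\{v\})=2$ simply spell out steps the paper asserts without detail.
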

\begin{proof} From Theorem \ref{supersimpth} and Lemma \ref{strongITD} we need to check that for any proper ITD $D$, $f(\vert D\vert )\leq \delta(D)$. We know from the lemmas above for any proper ITD $D$ for $\mathcal{K}_f$ with $d_{12}\leq 5$, $f(\vert D\vert )\leq \delta(D)$. Since for $t\geq 6, f(3\cdot t)\leq f(t)+1$, by Lemma \ref{SITDcond} we have that for $d_{12}\geq 6$, if $\delta(D)\geq d_{12}+1$, then $f(\vert D\vert )\leq \delta(D)$. But $\delta(D)> d_{12}$ in any proper ITD. Hence, any proper ITD with $d_{12}\geq 6$ is such that $f(\vert D\vert )\leq \delta(D)$ and so the theorem follows. Finally, $SU$-rank coincides with the Hrushovski dimension, and so $SU(\mathcal{M}_f)=2$. This follows from the characterisation of non-forking independence in terms of the dimension \cite[Corollary 2.21]{supersimple}.
\end{proof}

In the process of our proof of Theorem \ref{sups} we have proven that the smallest proper ITD in $\mathcal{K}_f$ has predimension 6. Our conditions in Construction \ref{actualf} put no constraints on how slowly $f(t)$ grows for $t> 6$. Hence, we can see that we may make $f$ slow growing enough to satisfy independent $n$-amalgamation for any $n\in\mathbb{N}$ of our choice. Indeed, by chosing $f(t)$ with growth the inverse of $(n+1)!$ for $t\geq 6$, we can choose $\mathcal{M}_f$ to have independent $n$ amalgamation for all $n$ \cite{Udiamalg}.\\

Finally, we note that structures of the form of $\mathcal{M}_f$ have weak elimination of imaginaries.

\begin{prop}\label{HWEI}
Let $\mathcal{M}_f$ be as in Remark $5.2$, then it has weak elimination of imaginaries. 
\end{prop}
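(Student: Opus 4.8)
The plan is to establish weak elimination of imaginaries (WEI) for $\mathcal{M}_f$ by the standard route: it suffices to show that for every finite tuple $\overline{a}$ from $\mathcal{M}_f$ and every finite set $B$ of parameters, there is a smallest algebraically closed set $B_0 \subseteq \mathrm{acl}(B)$ such that $\mathrm{tp}(\overline{a}/B)$ does not fork over $B_0$ and $\mathrm{tp}(\overline{a}/B_0)$ implies $\mathrm{tp}(\overline{a}/B)$; equivalently (and more usefully here), one shows that the algebraic closure operator in $\mathcal{M}_f$ has a property forcing imaginaries to be interalgebraic with real tuples. The cleanest formulation to aim for is: for any $\overline{a}$, the family of $d$-closed sets $C$ with $\overline{a} \in C$ has a well-defined smallest element under intersection — which is exactly $\mathrm{cl}(\overline{a})$, already guaranteed by \cite[Lemma 2.1]{Wagner:ST} and the discussion preceding Notation \ref{notameas} — together with the fact that the automorphism group acts on these closures in a way compatible with canonical bases.

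\textbf{First} I would recall the general criterion: an $\omega$-categorical structure $\mathcal{N}$ has WEI if and only if for every finite $A \subseteq N$ and every $e \in N^{eq}$, the set $\{A' : A' \equiv_e A,\ A' \text{ finite of the right arity}\}$ — more precisely, one checks that $e$ is interalgebraic over $\emptyset$ with some real tuple. The workable sufficient condition for Hrushovski constructions (used by Evans and others) is: whenever $X \subseteq \mathcal{M}_f$ is a definable set invariant under $\mathrm{Aut}(\mathcal{M}_f/\mathrm{cl}(\overline{b}))$ for some real $\overline{b}$, then $X$ is already definable over $\mathrm{cl}(\overline{b})$ together with finitely many real parameters from $\mathrm{acl}$ of it; this follows because types over a $d$-closed set $A \leq \mathcal{M}_f$ are determined by quantifier-free data on closures (Theorem \ref{Hrushconstrtheorem} and \cite[Corollary 2.4]{Wagner:ST}), so the Galois group over $A$ acts "locally finitely" and any imaginary is coded by the finite orbit data, which lives in $\mathrm{acl}^{eq}$ of a real tuple. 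Concretely: take $e \in \mathcal{M}_f^{eq}$, let $\overline{b}$ be a real tuple in $\mathrm{acl}^{eq}(e)$ — e.g. enumerate a finite set whose definable closure contains $e$-relevant data — and show $e \in \mathrm{dcl}^{eq}(\mathrm{cl}(\overline{b}))$ using that $\mathrm{cl}(\overline{b})$ is fixed setwise by $\mathrm{Aut}(\mathcal{M}_f/e)$ and that no proper algebraic extensions intervene beyond $\mathrm{cl}$ because algebraic closure in $\mathcal{M}_f$ equals $\mathrm{cl}$ (stated after Theorem \ref{Hrushconstrtheorem}).

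\textbf{The key steps}, in order, are: (1) recall that in $\mathcal{M}_f$ algebraic closure coincides with $d$-closure $\mathrm{cl}$, which is finite and unique-minimal (already in the excerpt); (2) invoke the criterion that an $\omega$-categorical structure has WEI iff for every $e \in M^{eq}$ there is a real finite tuple $\overline{b}$ with $e \in \mathrm{dcl}^{eq}(\overline{b})$ and $\overline{b} \in \mathrm{acl}^{eq}(e)$ — it suffices to find, for each $e$, a real $\overline{b} \in \mathrm{acl}^{eq}(e)$ such that $\mathrm{Stab}(e) \supseteq \mathrm{Aut}(\mathcal{M}_f/\overline{b})$; (3) given $e$, pick any real tuple $\overline{c}$ such that $e \in \mathrm{dcl}^{eq}(\overline{c})$ (possible since $e$ is an imaginary of a genuinely definable set), replace $\overline{c}$ by $\mathrm{cl}(\overline{c})$, and then intersect over all conjugates: set $\overline{b}$ to enumerate $\bigcap_{\sigma \in \mathrm{Aut}(\mathcal{M}_f/e)} \mathrm{cl}(\sigma \overline{c})$, which is $d$-closed by Lemma \ref{basicleq}.3 (closure under intersections of $\leq$-sets) and is contained in every such $\mathrm{cl}(\sigma\overline{c})$, hence fixed setwise by $\mathrm{Aut}(\mathcal{M}_f/e)$; (4) argue that this $\overline{b}$ already codes $e$: since types over the $d$-closed set $\overline{b}$ are determined by quantifier-free closure data and $\mathrm{Aut}(\mathcal{M}_f/\overline{b})$ acts on everything above $\overline{b}$ with the free-amalgamation structure, one shows $\mathrm{Aut}(\mathcal{M}_f/\overline{b}) \subseteq \mathrm{Stab}(e)$ by checking that $e$ depends only on the isomorphism type of $\mathrm{cl}(\overline{c})$ over $\overline{b}$ — which is constant on the orbit — giving $e \in \mathrm{dcl}^{eq}(\overline{b})$; (5) conclude WEI.

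\textbf{The main obstacle} will be step (4): showing that the intersection $\overline{b}$ of the conjugate closures is \emph{large enough} to pin down $e$, i.e. that $\mathrm{Aut}(\mathcal{M}_f/\overline{b})$ really does stabilize $e$ and not merely setwise-fix $\overline{b}$. The subtlety is that an automorphism fixing $\overline{b}$ pointwise might still permute the finitely many conjugates $\sigma\overline{c}$ of $\overline{c}$ nontrivially, and one must rule this out — or absorb it — by exploiting that $\overline{b}$ was chosen as the intersection \emph{over the orbit of $e$}, combined with the stationarity/uniqueness features of non-forking in $\mathcal{M}_f$ (Lemma \ref{indepfree}, Remark \ref{moreind}) to show the orbit of $\overline{c}$ over $\overline{b}$ is a single type all of whose realizations code the same $e$. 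In practice this is where one either cites the general Hrushovski-construction WEI results (e.g. the arguments in \cite{Evans}, \cite{Wagner:ST}) essentially verbatim, since our $\mathcal{K}_f$ is a free amalgamation class with algebraic closure $=\mathrm{cl}$, or reproduces the short self-contained argument that free amalgamation classes whose $\mathrm{acl}$ equals a canonical $d$-closure operator always have WEI. I would present it via the citation route, noting that nothing in Construction \ref{actualf} obstructs the standard proof.
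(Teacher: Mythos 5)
Your fallback — cite the general weak-elimination-of-imaginaries results for free amalgamation classes of this kind — is in fact exactly what the paper does: its entire proof consists of observing that $\mathcal{M}_f$ satisfies the hypotheses (P1)--(P5) of \cite{supersimple} and then applying Lemma 2.9 and Corollary 2.7 of that paper. So the route you say you would actually take matches the paper's. The issue is with the self-contained argument you sketch before retreating to the citation: it has a genuine gap at the decisive step, and you are right to flag step (4) as the obstacle, but the gap is worse than a technicality to be ``absorbed.''

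Concretely: setting $\overline{b}$ to enumerate $\bigcap_{\sigma\in\mathrm{Aut}(\mathcal{M}_f/e)}\mathrm{cl}(\sigma\overline{c})$ does give a finite $d$-closed set whose enumeration lies in $\mathrm{acl}^{eq}(e)$, but the claim $e\in\mathrm{dcl}^{eq}(\overline{b})$ is precisely the content of weak elimination of imaginaries and is not a consequence of anything you have established. Your justification — that $e$ ``depends only on the isomorphism type of $\mathrm{cl}(\overline{c})$ over $\overline{b}$, which is constant on the orbit'' — is circular: the relevant orbit at that point is the orbit of $\overline{c}$ under $\mathrm{Aut}(\mathcal{M}_f/\overline{b})$, not under $\mathrm{Aut}(\mathcal{M}_f/e)$, and showing that every $\overline{b}$-conjugate of $\overline{c}$ computes the \emph{same} value of the $\overline{c}$-definable imaginary $e$ is exactly what must be proved; in an $\omega$-categorical structure without WEI this is where the argument dies. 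Moreover, when the class of tuples defining $e$ is infinite (e.g.\ $e$ a class of a definable equivalence relation with infinite classes), the intersection over the full $\mathrm{Aut}(\mathcal{M}_f/e)$-orbit can collapse to something far too small (even to $\mathrm{cl}(\emptyset)$), so the recipe itself needs repair: one should first reduce, via supersimplicity, to the case where $e$ is interalgebraic with the canonical base of a real type, and then identify that canonical base with a closed real set using the characterization of non-forking in Lemma \ref{indepfree} together with an independence-theorem/stationarity argument over closed sets. That is exactly the machinery packaged in conditions (P1)--(P5) and Lemma 2.9/Corollary 2.7 of \cite{supersimple}; if you want the result here, cite it as the paper does rather than re-deriving it.
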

\begin{proof}
From Theorem 5.12, we know that $\mathcal{M}_f$ satisfies conditions (P1)-(P5) from \cite{supersimple}. And these conditions are sufficient for weak elimination of imaginaries by Lemma 2.9 and Corollary 2.7 from \cite{supersimple}.   
\end{proof}

\printbibliography

\end{document}